\documentclass[12pt]{amsart}

\usepackage{times}

\usepackage{amssymb}

\usepackage{thmtools}

\usepackage{centernot}

\usepackage{tikz}
\usetikzlibrary{arrows.meta}

\usepackage[shortlabels]{enumitem}

\usepackage[spacing=true,kerning=true,babel=true,tracking=true]{microtype}

\usepackage{float}

\usepackage{hyperref}
\hypersetup{
	pdfstartview={XYZ null null 1.00}, 
	pdfpagemode=UseNone, 
	colorlinks,
	breaklinks, 
	linkcolor=blue,
	urlcolor=blue, 
	anchorcolor=blue,
	citecolor=blue
}

\usepackage[capitalise]{cleveref}
\crefformat{footnote}{#2\footnotemark[#1]#3}

\usepackage{geometry}
\usepackage{comment}

\usepackage{xspace}
\usepackage{ bbold }
\usepackage{graphicx}

\usepackage{etoolbox}

\usepackage{thmtools}
\usepackage{thm-restate}

\makeatletter
\def\subsection{\@startsection{subsection}{2}%
  \z@{.8\linespacing\@plus.7\linespacing}{.5\linespacing}%
  {\normalfont\bfseries}}

\def\@seccntformat#1{%
  \protect\textup{%
    \protect\@secnumfont
    \expandafter\protect\csname format#1\endcsname 
    \csname the#1\endcsname
    \protect\@secnumpunct
  }%
}

\patchcmd{\@startsection}
  {\@afterindenttrue}
  {\@afterindentfalse}
  {}{}
 \makeatother

\makeatletter
\def\l@section{\@tocline{1}{5pt}{0pc}{}{}}
\renewcommand{\tocsection}[3]{%
	\indentlabel{\@ifnotempty{#2}{\makebox[20pt][l]{%
				\ignorespaces#1 #2.\hfill}}}\sc #3\dotfill}

\newdimen{\tocsubsecmarg}
\def\l@subsection{\@tocline{2}{3pt}{0pc}{\tocsubsecmarg}{}}
\renewcommand{\tocsubsection}[3]{%
	\indentlabel{\@ifnotempty{#2}{\makebox[30pt][l]{%
				\ignorespaces#1 #2.\hfill}}}#3\dotfill}
\makeatother
\let\oldtocsubsection=\tocsubsection
\renewcommand{\tocsubsection}[2]{\hspace{3em} \oldtocsubsection{#1}{#2}}

\numberwithin{equation}{section}

\theoremstyle{plain}

\newtheorem{lemma}[equation]{Lemma}

\crefname{prop}{Proposition}{Propositions}
\newtheorem{prop}[equation]{Proposition}

\newtheorem{theorem}[equation]{Theorem}

\crefname{obs}{Observation}{Observations}
\newtheorem{obs}[equation]{Observation}

\crefname{cor}{Corollary}{Corollaries}
\newtheorem{cor}[equation]{Corollary}

\makeatletter
\def\@empty{}
\def\ifemptycredit#1{%
	\def\tmp{#1}%
	\ifx\tmp\@empty%
	\else%
	{~(#1)}%
	\fi%
}
\makeatother

\newenvironment{namedthm*}[2][]{
\medskip\par\noindent \textbf{#2}\ifemptycredit{#1}\textbf{.}\itshape\xspace
}{\medskip}

\theoremstyle{definition}

\crefname{defn}{Definition}{Definitions}
\newtheorem{defn}[equation]{Definition}

\crefname{example}{Example}{Examples}
\newtheorem{example}[equation]{Example}

\crefname{question}{Question}{Question}
\newtheorem{question}[equation]{Question}

\crefname{prob}{Problem}{Problem}

\crefname{hypothesis}{Hypothesis}{Hypotheses}
\newtheorem{hypothesis}[equation]{Hypothesis}

\theoremstyle{remark}

\newtheorem*{acknowledge}{Acknowledgments}

\crefname{remark}{Remark}{Remarks}
\newtheorem{remark}[equation]{Remark}

\crefname{claim}{Claim}{Claims}
\newtheorem{claim}[equation]{Claim}
\newtheorem*{claim*}{Claim}

\crefname{assumption}{Assumption}{Assumptions}

\declaretheoremstyle[
spaceabove=\topsep, 
spacebelow=6pt,
headfont=\normalfont\itshape,
notefont=\normalfont, notebraces={(}{)},
bodyfont=\normalfont,
postheadspace=4pt,
qed=\mbox{\smaller[4]$\boxtimes$}
]{claimproofstyle}
\declaretheorem[name={Proof of Claim}, style=claimproofstyle, unnumbered]{pf}

\let\#\mathbb
\let\@\mathcal

\let\dfn\textbf

\newcommand{\F}{\mathbb{F}}

\newcommand{\N}{\mathbb{N}}

\newcommand{\R}{\mathbb{R}}
\newcommand{\Z}{\mathbb{Z}}

\newcommand{\Symb}{S}
\newcommand{\Words}[1][\Symb]{#1^{<\N}}

\newcommand{\shift}{\sigma}
\newcommand{\Rec}{\mathrm{Rec}_P}

\renewcommand{\mod}{\mathop{\mathrm{mod}}\xspace}

\newcommand{\e}{\varepsilon}

\renewcommand{\phi}{\varphi}

\newcommand{\conc}{}
\newcommand{\imp}{\Rightarrow}

\newcommand{\actson}{\curvearrowright}

\newcommand{\onum}[2][th]{$#2^\text{#1}$}

\newcommand{\ie}{i.e.,\xspace}
\renewcommand{\ae}{a.e.\xspace}
\newcommand{\eg}{e.g.,\xspace}

\newcommand*{\defeq}{\mathrel{\vcenter{\baselineskip0.5ex \lineskiplimit0pt \hbox{\scriptsize.}\hbox{\scriptsize.}}}=}

\newcommand{\dom}{\mathrm{dom}}

\newcommand{\set}[1]{\left\{ #1 \right\}}

\newcommand{\rest}[1]{|_{#1}}
\newcommand{\diam}{\mathrm{diam}}

\newcommand{\eqcomment}[1]{\Big[\text{#1}\Big] \hspace{6pt}}

\usepackage{color}
\newcommand{\red}[1]{{\color{red} #1}}
\definecolor{purple}{RGB}{116,0,159}

\definecolor{vert}{RGB}{7,126,26}

\newcommand{\Cyc}{\mathcal{C}_G}

\newcommand{\bdryaction}{\beta}

\newcommand{\symdif}{\mathbin{\triangle}}

\DeclareMathOperator{\reach}{\rightsquigarrow_{P^tP}}

\DeclareMathOperator{\reaches}{\rightsquigarrow_P}
\DeclareMathOperator{\notreaches}{{\centernot\rightsquigarrow}_P}

\DeclareMathOperator{\dancesto}{\leftrightsquigarrow_{P^T P}}

\let\dfn\textbf

\let\@\mathcal
\let\~\widetilde

\newcommand{\mySearrow}{\mathrel{\text{
    \begin{tikzpicture}[baseline=-0.5ex, scale=0.4]
        \draw[double, double distance=1.5pt, thick, -{Implies}] (0,1) -- (1,0);
    \end{tikzpicture}
}}}

\newcommand{\myNearrow}{\mathrel{\text{
    \begin{tikzpicture}[baseline=-0.5ex, scale=0.4]
        \draw[double, double distance=1.5pt, thick, -{Implies}] (0,0) -- (1,1);
    \end{tikzpicture}
}}}

\title{Bounded chaining in measurable dynamics}

\author{Anush Tserunyan}
\address[Anush Tserunyan]{Mathematics and Statistics, McGill University, Montréal, QC, Canada}
\email{anush.tserunyan@mcgill.ca}

\author{Jenna Zomback}
\address[Jenna Zomback]{Mathematics, Amherst College, Amherst, MA, USA}
\email{jzomback@amherst.edu}

\thanks{A.Ts.\ was supported by NSERC Discovery Grant RGPIN-2020-07120.}

\keywords{weakly mixing, chaining, ergodic, metrically ergodic, doubly ergodic, totally ergodic, free group, boundary action, Markov measure, strictly irreducible}

\subjclass{Primary 37A40, 37A25, 37A15, 37A50; Secondary 60G10}

\date{\today}

\begin{document}

\begin{abstract}
We introduce a one-parameter family of notions between double ergodicity and metric ergodicity for measure-class preserving (\ie nonsingular) actions of countable groups on standard probability spaces, providing infinitely many new invariants distinguishing weakly mixing actions. 
We call these properties (essentially) $k$-chaining, for $k \in \N$.
We apply this framework to study boundary actions of free groups of finite rank $r \ge 1$, where the boundary is equipped with a stationary Markov measure. 
We prove that in this context, weak mixing is equivalent to $(2r-1)$-chaining, as well as to strict irreducibility of the transition matrix of the Markov measure.
\end{abstract}

\maketitle

\tableofcontents

\section{Introduction}

Ergodicity and weak mixing are central notions in measurable dynamics.
A Borel action $\Gamma\actson (X,\mu)$ of a countable group $\Gamma$ on a standard probability space $(X,\mu)$ is \dfn{ergodic} if the only measurable, $\Gamma$-invariant subsets of $X$ are null or conull. 
Equivalently, for all positive measure sets $A, B \subseteq X$, there is $\gamma \in \Gamma$ such that the intersection $\gamma A \cap B$ has positive measure.

\begin{figure}[ht]\label{fig:ergodic}
\begin{center}
\begin{tikzpicture}

\node[circle, draw, minimum size=1.8cm] (A) at (2,0) {$A$};

\node[circle, draw, minimum size=.8cm] (g3) at (4,0) {$\gamma A$};

\node[rectangle, draw, minimum size=1.6cm] (B) at (5.2,0) {$B$};

\end{tikzpicture}
\end{center}
\end{figure}

An action $\Gamma\actson (X,\mu)$ as above is called \dfn{weakly mixing} if for each ergodic probability measure preserving (pmp) action $\Gamma \actson (Y,\nu)$, the diagonal action $\Gamma \actson (X \times Y, \mu \times \nu)$ is ergodic.

There are a plethora of conditions on group actions that are known to imply weak mixing, including double ergodicity (DE), strong almost transitivity (SAT), metric ergodicity (ME), and unitary ergodicity (UE) (see \cref{subsec:erg_wm_beyond} for definitions). 
The following implications hold for measure-class preserving (mcp) actions of countable groups and are due to Glasner and Weiss \cite[Theorem 1.2]{Glasner-Weiss:weak_mixing}:
\begin{equation*}\label{Glasner-Weiss_diagram}
\begin{matrix}
\text{DE} \\
& \mySearrow \\
& & \text{ME} \implies \text{UE} \iff \text{WM}. \\
& \myNearrow \\
\text{SAT}
\end{matrix}
\end{equation*}
Furthermore, weak mixing trivially implies double ergodicity if the action is pmp, so for pmp actions the above diagram collapses (SAT actions are by definition not pmp).

The main contribution of this paper is the definition and study of a new one-parameter family of notions which provides an infinite stratification of the above hierarchy.
We say that an action $\Gamma \actson (X,\mu)$ as above is \dfn{$k$-chaining} ($k$-C) if for all positive measure sets $A,B \subseteq X$, there are $\gamma_1,\dots,\gamma_k \in \Gamma$ such that the intersections $A \cap \gamma_1 A$, $\gamma_i A \cap \gamma_{i+1} A$ for all $i<k$, and $\gamma_{k} A \cap B$ all have positive measure.
Such a sequence $(A, \gamma_1 A, \gamma_2 A, \dots, \gamma_k A)$ of translates of $A$ is called a \dfn{$k$-chain} from $A$ to $B$.

\begin{figure}[ht]\label{fig:bounded_chaining}

\begin{center}
\begin{tikzpicture}

\node[circle, draw, minimum size=1.8cm] (A) at (0,0) {$A$};

\node[circle, draw, minimum size=1.4cm] (g1) at (1.2,0) {$\gamma_1 A$};
\node[circle, draw, minimum size=.6cm] (g2) at (2.4,0) {$\gamma_2 A$};
\node[circle, draw, minimum size=2cm] (g3) at (3.6,0) {$\gamma_3 A$};

\node[rectangle, draw, minimum size=1.6cm] (B) at (5.2,0) {$B$};

\end{tikzpicture}
\end{center}
\end{figure}

We say that the action is \dfn{boundedly chaining} (BC) if it is $k$-chaining for some $k \in \N$. 
Switching the order of quantifiers, we call the action \dfn{chaining} (C) if for all positive measure sets $A,B \subseteq X$, there is a $k$-chain from $A$ to $B$ for some $k \in \N$ (which depends on $A$ and $B$).

As a primary application of this framework, we give a characterization of which stationary Markov measures on the boundaries of free groups make the boundary action weakly mixing, see \cref{intro:wm_boundary}.
The motivation for this is the authors' pointwise ergodic theorem for pmp actions of free groups with averages over trees in the Cayley graph \cite[Theorem 6.2]{TsZ}, which applies exactly to stationary Markov measures which make the boundary action weakly mixing.

\subsubsection*{1-chaining in the literature}
While the notions of bounded chaining and chaining are new (to the best of the authors' awareness), 1-chaining has been studied previously under different names.

In \cite[Page 79]{Fur81}, Furstenberg introduced 1-chaining, although he did not give the property a name.
He proved that for pmp actions of $\Z$, 1-chaining is equivalent to weak mixing.
1-chaining for $\Z$-actions was further developed by Silva and coauthors, see \cite{IKSSW,LS17,HLS}\footnote{In the works of Silva and coauthors, the property of 1-chaining was originally referred to as ``double ergodicity'' and later on renamed ``weak double ergodicity'' to avoid confusion with the already established term double ergodicity, which stands for the ergodicity of the diagonal action on the square.}.

In \cite[Proposition 7.2]{LS17}, Loh and Silva show that in the context of mcp actions of countable groups, 1-chaining implies metric ergodicity.
They also remark that double ergodicity (\ie ergodicity of the diagonal action on the square) implies 1-chaining (see \cref{DE_implies_1C} for a proof of this fact).
Hence, for pmp actions, 1-chaining is equivalent to weak mixing.

\subsubsection*{Our results for mcp actions}

We prove the following implications involving the various degrees of chaining defined above.
Below, the property of \dfn{essential bounded chaining} (ess-BC) is a slight weakening of bounded chaining, see \cref{defn:ess-bounded_chaining}.

\begin{theorem}\label{intro:bounded_chaining_implications}
Let $\Gamma\actson (X,\mu)$ be an mcp Borel action of a countable group $\Gamma$ on a standard probability space $(X , \mu)$.
\begin{enumerate}[(a)]
\item If the action is essentially boundedly chaining (ess-BC), then it is metrically ergodic (ME) (\cref{essential_bounded_chaining_implies_ME}).
\item If the action is weakly mixing (WM), then it is chaining (C) (\cref{cor:weak_mixing_implies_chaining}).
\item If the action is chaining (C), then it is ergodic (E) (\cref{prop:characterization_of_chaining}).
\end{enumerate}
\end{theorem}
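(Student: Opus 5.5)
For (a) I would go through the definition of metric ergodicity directly. Let $\Gamma$ act by isometries on a separable metric space $(Y,d)$, and let $f\colon X\to Y$ be measurable and $\Gamma$-equivariant. Let $k$ be the chaining constant; in the essential version, $k$ should be the bound the definition \cref{defn:ess-bounded_chaining} provides for the relevant sets.

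First pick $y_0$ in the essential range of $f$, i.e.\ in the support of $f_*\mu$. Then $A_\e \defeq f^{-1}(B(y_0,\e))$ has positive measure for every $\e>0$. The key observation is that, because $\Gamma$ acts isometrically and $f$ is equivariant, $f(\gamma A_\e)\subseteq B(\gamma y_0,\e)$.

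Now suppose $f$ is not essentially constant. Then for some $\delta>0$ the set $B\defeq f^{-1}(Y\setminus B(y_0,\delta))$ has positive measure. Take a $k$-chain $(A_\e,\gamma_1A_\e,\dots,\gamma_kA_\e)$ from $A_\e$ to $B$. Each overlap of consecutive translates contains a point, so consecutive centers $\gamma_i y_0,\gamma_{i+1}y_0$ lie within $2\e$ of each other. By the triangle inequality, some point of $B$ is mapped within $(2k+2)\e$ of $y_0$. Choosing $\e<\delta/(2k+2)$ contradicts the definition of $B$.

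The only delicate point is that $k$ must not depend on $\e$. This is exactly why bounded (or essentially bounded) chaining is needed and plain chaining does not suffice. Checking that the weakened definition still yields a uniform $k$ for the sets $A_\e$ is the step I expect to require care.

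For (b), fix positive-measure sets $A,B$ and let $H\defeq\set{\gamma\in\Gamma : \text{there is a finite chain from } A \text{ to } \gamma A}$. I claim $H$ is a subgroup. It contains the identity. If $\gamma,\delta\in H$, translating a chain from $A$ to $\delta A$ by $\gamma$ gives a chain from $\gamma A$ to $\gamma\delta A$; concatenating it with a chain from $A$ to $\gamma A$ shows $\gamma\delta\in H$. Translating a chain from $A$ to $\gamma A$ by $\gamma^{-1}$ and reversing it shows $\gamma^{-1}\in H$.

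Put $R\defeq\bigcup_{h\in H}hA$. If $gR\cap g'R$ has positive measure, a chain connects $gA$ to $g'A$, so $g^{-1}g'\in H$. Hence the sets $gR$, for $gH\in\Gamma/H$, are pairwise essentially disjoint. Their union $U=\Gamma R$ is $\Gamma$-invariant and non-null. Since WM implies ergodicity, $U$ is conull.

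This gives an a.e.\ defined $\Gamma$-equivariant map $\phi\colon X\to\Gamma/H$, sending $x\in gR$ to $gH$. Composing with $gH\mapsto\delta_{gH}\in\ell^2(\Gamma/H)$ yields an equivariant map into the unit sphere of a unitary representation. Since WM $\iff$ UE by \cite{Glasner-Weiss:weak_mixing}, this map is essentially constant. So a single coset is $\Gamma$-fixed, which forces $H=\Gamma$ and $R$ conull. In particular $hA\cap B$ is non-null for some $h\in H$, which produces a chain from $A$ to $B$.

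For (c), suppose $A$ is $\Gamma$-invariant and both $A$ and $B\defeq X\setminus A$ have positive measure. Every translate $\gamma A$ equals $A$, so any chain starting at $A$ consists of copies of $A$. Its last term then satisfies $\gamma_kA\cap B=A\cap B=\emptyset$, so no chain from $A$ to $B$ exists, contradicting chaining.
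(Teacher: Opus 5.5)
Your (c) is the paper's argument. Your (b) is the paper's route compressed into one step: your $H$ is exactly $\Delta=\langle\Gamma_A\rangle$ (consecutive overlaps in a chain give $\gamma_i^{-1}\gamma_{i+1}\in\Gamma_A$), and $R=\Delta A$. The paper likewise rules out $\Delta\neq\Gamma$ by unitary ergodicity on $\ell^2(\Gamma/\Delta)$ (\cref{prop:generating_section}) and then uses ergodicity (\cref{prop:characterization_of_chaining}). Both parts are fine.

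The gap is in (a), and it is not the one you flag. In \cref{defn:ess-bounded_chaining} the constant $k$ is fixed, so uniformity is not at issue. What is weakened is the \emph{domain} of the hypothesis: $A$ is only guaranteed to $k$-chain to $B$ when $A,B\subseteq W$, for a fixed group-generating complete section $W$. Your sets $f^{-1}(B(y_0,\e))$ and $f^{-1}(Y\setminus B(y_0,\delta))$ need not lie in $W$, so as written you have proved only BC $\Rightarrow$ ME.

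Choose $y_0$ in the support of $f_*(\mu\rest{W})$ and intersect both sets with $W$. Your triangle-inequality argument then runs verbatim; the intermediate translates $\gamma_i A_\e$ may leave $W$ harmlessly. But you now need $W\cap f^{-1}(Y\setminus B(y_0,\delta))>_\mu 0$ for some $\delta$, i.e.\ that $f\rest{W}$ is not essentially constant. This is the real content of the essential version, and it requires the group-generating hypothesis, which your proposal never uses. The paper's own proof asserts this step in one line.

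The missing argument goes as follows. Suppose $f=y_0$ a.e.\ on $W$. For each $\gamma\in\Gamma_W$ and a.e.\ $x$ in the positive-measure set $W\cap\gamma W$, you get $y_0=f(x)=\gamma f(\gamma^{-1}x)=\gamma y_0$. Since $\Gamma_W$ generates $\Gamma$, $y_0$ is $\Gamma$-fixed. Hence $f=\gamma y_0=y_0$ a.e.\ on every $\gamma W$, and so on $\bigcup_{\gamma}\gamma W=_\mu X$.

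Being a complete section alone does not suffice. Take the swap action of $\Z/2\Z$ on $\set{0,1}$ with uniform measure. Then $W=\set{0}$ is a complete section on which the action is trivially $0$-chaining. Yet the identity map into $\set{0,1}$ with the discrete metric is a non-constant equivariant map to an isometric action.
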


We additionally provide counterexamples to several converses of the above implications (see \cref{sec:counterexamples}).
We then have that for all mcp actions,
\begin{align*}
\text{DE} 
\mathop{\substack{
\imp
\\
\red{\nLeftarrow}
}}
\text{1-C}
\imp
\text{2-C}
\imp 
\text{3-C}
\imp
\dots 
\imp
\text{BC}
\mathop{\substack{
\imp
\\
\red{\nLeftarrow}
}}
\text{ess-BC}
\imp
\text{ME}
\imp
\text{WM}
\mathop{\substack{
\imp
\\
\red{\nLeftarrow}
}}
\text{C}
\mathop{\substack{
\imp
\\
\red{\nLeftarrow}
}}
\text{E},
\end{align*}
where the last two converses fail even in the pmp setting.
We also show that the essential bounded chaining hierarchy is strict, providing infinitely many distinct invariants between double ergodicity (DE) and metric ergodicity (ME) in the mcp context:
\begin{align*}
\text{DE} 
\mathop{\substack{
\imp
\\
\red{\nLeftarrow}
}}
\text{1-C}
\mathop{\substack{
\imp
\\
\red{\nLeftarrow}
}}
\text{1-ess-C}
\mathop{\substack{
\imp
\\
\red{\nLeftarrow}
}}
\text{2-ess-C}
\mathop{\substack{
\imp
\\
\red{\nLeftarrow}
}}
\text{3-ess-C}
\mathop{\substack{
\imp
\\
\red{\nLeftarrow}
}}
\dots
\mathop{\substack{
\imp
\\
\red{\nLeftarrow}
}}
\text{ess-BC}
\imp
\text{ME}
\imp
\text{WM}
.
\end{align*}

We also give a characterization of chaining as the actions in which every positive measure set is a complete section (modulo null sets) whose return times generate the group, see \cref{prop:characterization_of_chaining}.
Lastly, we prove that for mcp actions, total ergodicity implies chaining (\cref{prop:TE_implies_chaining}), and the converse is true for mcp actions of $\Z$ (\cref{prop:C_implies_TE_for_Z}).

\subsubsection*{Application: boundary actions of free groups}
Let $r \ge $ denote a natural number, $\F_r$ denote the free group of rank $r$ on the set $\set{a_1,a_2,\dots,a_r}$ of free generators, and put $\Symb \defeq \set{a_1^{\pm 1}, \dots, a_r^{\pm 1}}$.
Let $\partial \F_r$ denote the Gromov boundary of $\F_r$, which we identify with the closed subset of $S^\N$ of infinite reduced words. 
The boundary action $\F_r \actson \partial \F_r$ is then just by concatenation and cancellation.

A well-studied class of probability measures on $\partial\F_r$ is that of Markov measures $\mu$ induced by a positive initial probability distribution $\pi$ on $\Symb$ and a transition matrix $P$ indexed by $\Symb$.
While the boundary action $\F_r \actson (\partial \F_r, \mu)$ is singular (unless we have $P(a,b) = 0$ exactly when $a = b^{-1}$), weak mixing of this action is instrumental for several pointwise ergodic theorems for pmp actions of free groups, particularly, \cite{BN1,BN2} and the authors' ergodic theorem \cite[Theorem 6.2]{TsZ}.
In \cref{intro:wm_boundary} (restated later as \cref{wm_boundary}) we characterize exactly which Markov measures make the boundary action weakly mixing among all stationary Markov chains satisfying a mild symmetry assumption.

Recall that a nonnegative square matrix $P$ indexed by a finite set $\Symb$ is \dfn{irreducible} if for all $a,b\in\Symb$, there exists $n\ge 1$ such that $P^n(a,b)>0$.
Bufetov introduced the stronger notion of \dfn{strict irreducibility}, requiring both $P$ and $P^tP$ to be irreducible \cite{Bufetov:ball_averages,Bufetov:skew_products_erg_thms_Rus,Bufetov:skew_products_erg_thms_Eng}.
For stationary finite-state Markov chains, he proved that strict irreducibility implies ergodicity of every skew extension via a jointly ergodic family of pmp transformations \cite[Theorem 5]{Bufetov:skew_products_erg_thms_Eng}.
Lummerzheim, Pogorzelski, and Zimmermann extended this result to arbitrary state spaces and proved its converse \cite{LPZ}.

To relate skew extensions to boundary actions, suppose that $\Symb$ is the symmetric generating set of $\F_r$ as above and that $P(a,b)=0$ whenever $a=b^{-1}$, so the corresponding Markov measure $\mu$ is supported on $\partial\F_r$.
For an ergodic pmp action $\F_r\actson^\alpha(Y,\nu)$, the skew extension of the shift $\shift$ on $(\partial \F_r, \mu)$ associated with the family of transformations $T_s(y)=s^{-1}\cdot y$ on $Y$ is
\[
\shift_\alpha(x,y)
=(\shift x,T_{x_0}y)
=(x_0^{-1}\cdot x,x_0^{-1}\cdot y).
\]
Then $\shift_\alpha$ generates the same equivalence relation as the diagonal action $\F_r \actson \partial\F_r\times Y$.
Thus, Bufetov's theorem yields the implication \labelcref{intro:item:strict_irred}$\imp$\labelcref{intro:item:wm} in \cref{intro:wm_boundary} below; we give a different proof via $(2r-1)$-chaining in \cref{wm_boundary}.

Furthermore, our converse implication \labelcref{intro:item:wm}$\imp$\labelcref{intro:item:strict_irred} in \cref{intro:wm_boundary} is new and does not follow from the converse in \cite{LPZ}.
Indeed, weak mixing tests only skew extensions arising from jointly ergodic families of invertible pmp transformations satisfying $T_{s^{-1}}=T_s^{-1}$.
When strict irreducibility fails, \cite{LPZ} witnesses nonergodicity using a simple family of \emph{noninvertible} pmp transformations.
We instead construct an ergodic pmp $\F_r$-action whose diagonal product with the boundary action is nonergodic, using a substantially more involved combinatorial argument; see \cref{subsubsec:wm_imp_strict-irred}.

\begin{theorem}[\cref{wm_boundary}]\label{intro:wm_boundary}
Let $\F_r$ be the free group on the set $\set{a_1,a_2,\dots,a_r}$ of free generators, and put $\Symb \defeq \set{a_1^{\pm 1}, \dots, a_r^{\pm 1}}$.
Let $\mu$ be a Markov measure on $\partial \F_r$, whose stochastic matrix $P$ is weakly inverse-symmetric (\cref{symmetry}) and admits a stationary distribution.
Then the following are equivalent:

\smallskip

\begin{enumerate}[(1),leftmargin=*]
\item\label{intro:item:wm} The boundary action $\F_r \actson (\partial \F_r, \mu)$ is weakly mixing.

\item The boundary action $\F_r \actson (\partial \F_r, \mu)$ is boundedly chaining.

\item The boundary action $\F_r \actson (\partial \F_r, \mu)$ is $(2r-1)$-chaining.

\item\label{intro:item:strict_irred} $P$ is strictly irreducible.
\end{enumerate}
\end{theorem}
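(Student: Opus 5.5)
The plan is to prove the cycle (4)$\imp$(3)$\imp$(2)$\imp$(1)$\imp$(4). The implication (3)$\imp$(2) is trivial. For (2)$\imp$(1) we invoke \cref{intro:bounded_chaining_implications}(a): bounded chaining implies essential bounded chaining, which implies metric ergodicity, and metric ergodicity implies weak mixing by the Glasner--Weiss diagram. The substance therefore lies in (4)$\imp$(3) and (1)$\imp$(4).

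For (4)$\imp$(3), we first reduce to cylinder sets. Since $\mu$ is Markov with positive $\pi$, Lebesgue density on $\partial\F_r$ (with respect to the cylinder filtration) lets us approximate positive-measure $A$ and $B$ by cylinders $[w]$ and $[v]$ on which they have density close to $1$. The Markov property says that, conditioned on $[w]$, the tail after $w$ is distributed like a chain started from the last letter of $w$. We then translate by $\gamma = u w^{-1}$, so that $\gamma A \approx [u]\cdot(\text{tail of }A)$. The key point is that two translates $\gamma A$ and $\gamma' A$ overlap in positive measure as soon as their leading cylinders are compatible and the conditional tail distributions overlap. Because all tail laws are mutually absolutely continuous when conditioned on sharing a long common prefix from the same state, we only need to track the ``current last letter'' state.

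Passing from $\gamma_i A$ to $\gamma_{i+1}A$ then corresponds to moving the prefix state $a \to b$ whenever some word can be prepended or removed so as to relate them. Here $P$ governs forward extension and $P^tP$ governs the ``backtrack then extend'' move, in which two words share a common successor. Strict irreducibility of $P^tP$ makes the graph of $\reach$-relations on the $2r$ states connected. I expect a careful count to show that at most $2r-1$ steps (the diameter of a connected graph on $2r$ vertices, exploiting the inverse-symmetry to save one step) suffice to pass from $A$'s state to $B$'s. Irreducibility of $P$ is used to append an appropriate suffix that reaches $[v]$. The main technical work is verifying the positivity of each overlap, using \cref{symmetry} to relate inverse letters.

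For (1)$\imp$(4), we argue by contrapositive. If $P$ is not irreducible, the boundary action is not even ergodic, via an invariant tail set. If $P$ is irreducible but $P^tP$ is not, we partition $\Symb$ into the $\reach$-classes $C_1,\dots,C_m$ with $m\ge2$. Define a cocycle $c\colon \F_r\times\partial\F_r\to \mathrm{Sym}(m)$ (or $\Z/m$) recording how the class of the tail letters shifts under concatenation. This should be a finite permutation action, so that the map $x \mapsto$ (class of $x_n$ twisted appropriately) gives an equivariant factor to a finite (or compact) pmp $\F_r$-space $Y$ with an ergodic action. The diagonal $\partial\F_r\times Y$ then admits the invariant set $\{(x,y): \text{label}(x)=y\}$, which witnesses non-weak-mixing. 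The hard part, and the main obstacle, is defining the generator action on $Y$ consistently with inverses so that it is a genuine group action, and checking ergodicity; if a finite $Y$ fails, we would try an induced or skew-product construction over $\Z^m$.
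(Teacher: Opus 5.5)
\paragraph{(1)$\Rightarrow$(4) is not proved.} This is the decisive implication. You reduce it to building an ergodic pmp $\F_r$-space $Y$ with an equivariant label, and then leave open how to make the generator action consistent with inverses and ergodic. Your proposal also never uses weak inverse-symmetry (\cref{symmetry}) in this step, yet this is exactly where it is needed (in the paper, the only place), since it is what makes a witness exist.

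The paper avoids the inverse problem as follows.
\begin{itemize}
\item Let $V$ be the set of pairs $\{a,a^{-1}\}$, and let $\F_r$ act transitively on $Z=\Z_2^V$ by $\gamma\cdot v=\iota(\gamma)+v$, where $\iota$ sends $a^{\pm1}$ to the same element of order two. Consistency with inverses is then automatic.
\item Ergodicity of $\partial\F_r\times Z$ forces the subgroup $\Cyc$ generated by the cycles of the quotient graph to be all of $\Z_2^V$.
\item A parity (Eulerian degree-count) argument, \cref{main_lemma}, shows that a vertex lying in $\Cyc$ is $\reach$-related to all its out-neighbours. This yields irreducibility of $P^tP$. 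Weak inverse-symmetry is what makes $\reach$ well defined on the pairs $\{a,a^{-1}\}$.
\end{itemize}

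Your permutation idea can in fact be completed, but only with this hypothesis. Let $[s]$ be the $\reach$-class of $s$ and $f(s)$ the class containing all successors of $s$. Irreducibility and weak inverse-symmetry give $[s]=[s^{-1}]$ and $f(s)=f(s^{-1})$. So letting $s$ act on the $m$ classes by the (possibly trivial) transposition of $[s]$ and $f(s)$ is a well-defined action of $\F_r$, and it is transitive because $P$ is irreducible. The map $x\mapsto[x_0]$ is equivariant whenever both $x$ and $\gamma x$ are legal. Hence the saturation of its graph over the legal words is an invariant set of measure $1/m$. As written, though, this step is missing.

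\paragraph{(2)$\Rightarrow$(1) ignores singularity.} \cref{essential_bounded_chaining_implies_ME} and the Glasner--Weiss implications are about mcp actions. But $\F_r\actson(\partial\F_r,\mu)$ is singular as soon as $P$ vanishes somewhere other than the entries $P(a,a^{-1})$, which is exactly the interesting case. This is not a formality: the action in \cref{product_nonsingularization_counterexample}, with respect to $\delta_0$, is singular, $0$-chaining, and not weakly mixing.

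The paper passes to the nonsingularization $\tilde\mu$. The orbit relation restricted to the conull set $L$ of legal words is $\mu$-nonsingular, $\mu\rest{L}\sim\tilde\mu\rest{L}$, and $L$ is a group-generating complete section. So $k$-chaining for $\mu$ gives essential $k$-chaining for $\tilde\mu$ (\cref{Ess-bounded_chaining_for_nonsingularization}). Then $\tilde\mu$ is metrically ergodic, hence weakly mixing, and weak mixing transfers back to $\mu$ by \cref{WM_nonsingularization}.

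\paragraph{(4)$\Rightarrow$(3) is right in outline but misdescribed.} Your argument follows the paper's, but $P^tP$ encodes common \emph{predecessors}, not common successors. For a zigzag step with $P(b_i,a_i),P(b_i,a_{i+1})>0$:
\begin{itemize}
\item Lebesgue density gives a legal word $w_ib_i$ with $\mu_{w_ib_i}(A)$ close to $1$.
\item The Markov property then makes $(w_ib_i)^{-1}A$ nearly full in both $[a_i]$ and $[a_{i+1}]$, so consecutive translates overlap.
\end{itemize}
No suffix-appending and no inverse-symmetry is needed here. The bound $2r-1$ is simply the diameter bound for a connected graph on $2r$ vertices.
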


\subsubsection*{Open questions}
Many possible implications in the hierarchy of notions above weak mixing remain open, notably whether metric ergodicity and unitary ergodicity are equivalent (\cite[Problem 1.4]{Glasner-Weiss:weak_mixing}).
Relatedly, it follows from \cref{ess_1C_not_BC} that metric ergodicity does not imply bounded chaining, but the following is still open:

\begin{question}\label{q:ME_implies_ess-bdd-C}
Is there an mcp action of a countable group $\Gamma$ that is metrically ergodic but not essentially boundedly chaining?
\end{question}

We show in \cref{strict_essBC_hierarchy} that for all $k \in \N$, there is a singular action that is essentially $(k+1)$-chaining but not essentially $k$-chaining.
However, we do not yet have such examples of nonsingular actions.

\begin{question}\label{q:bbd-C_implies_1-C}
Is there an mcp $\Gamma$-action (for some countable group $\Gamma$) that is $(k+1)$-chaining but not $k$-chaining, or at least boundedly chaining but not 1-chaining?
\end{question}

In \cite[Theorem 3.1]{HLS}, the authors provide an example of an infinite measure-preserving free action of $\Z$ which is metrically ergodic but not 1-chaining.
Refining this phenomenon, we provide an example (\cref{ess_1C_not_BC}) of a free mcp action of $\F_2$ which is metrically ergodic, in fact essentially 1-chaining, but not even boundedly chaining.

\begin{question}\label{q:amenable-WM_implies_bdd-C}
Is there an mcp weakly mixing action of $\Z$ (or any other amenable group) that is not boundedly chaining? 
In particular, is the $\Z$-action in \cite[Theorem 3.1]{HLS} boundedly chaining?
\end{question}

\subsubsection*{Organization}
In \cref{sec:prelims}, we give the necessary notation and definitions that are used throughout the paper, we discuss how to ``nonsingularize'' a measure so that we may later work with singular actions, and we provide translations of dynamical properties of Markov measures into combinatorial properties of the associated graph on the state space.
In \cref{sec:def_of_chaining} we introduce the notions of chaining and bounded chaining, which are then systematically studied in \cref{sec:chaining_vs_others} and \cref{sec:bounded_chaining_vs_others}, respectively.
The results in \cref{intro:bounded_chaining_implications} are proved in these two sections.
In \cref{sec:boundary_actions} we prove \cref{intro:wm_boundary} and we show that the essential bounded chaining hierarchy is strict in the mcp setting, using nonsingularizations of boundary actions of free groups as our counterexamples.

\subsubsection*{Declaration on the use of generative AI}
The authors declare that generative AI tools were not used in the research or writing of this manuscript.
The only uses of AI were to generate the TikZ diagrams and to proofread this declaration.

\begin{acknowledge}
The authors are grateful to Cesar Silva for useful discussions regarding the history of 1-chaining.
We thank Elias Zimmermann for many helpful conversations about this project, in particular for pointing out \cref{obs:irreducible} and helping to clarify the relationship between diagonal products of boundary actions and skew extensions.
Thanks to Matt Bowen for providing the proof of \cref{prop:generating_section_pmp}.
We also thank Josh Frisch and Felix Pogorzelski stimulating discussions.
Finally, the authors thank each other so that their acknowledgment includes women.
\end{acknowledge}

\section{Preliminaries}\label{sec:prelims}

Our set $\N$ of natural numbers includes $0$, and we denote $\N^+ \defeq \N \setminus \set{0}$.
For $\e > 0$ and $a,b \in \R$, we write $a \approx_\e b$ to mean $|a-b| \le \e$.

Given a probability space $(\partial \F_r,\mu)$, we write $A=_\mu B$ to mean $\mu(A\triangle B)=0$. 
Similarly, we write $A\subseteq_\mu B$ to mean $\mu(A \setminus B)=0$, and $A >_\mu 0$ to mean $\mu(A) > 0$.

For a set $S$, we denote by $\Words$ the set of finite sequences in $S$, which we refer to as \textbf{words} in $S$.

\subsection{Ergodicity, weak mixing, and beyond}
\label{subsec:erg_wm_beyond}

Throughout, let $\Gamma$ denote a countable group and $(X,\mu)$ denote a standard probability space.

\begin{defn} 
We say that a Borel action $\Gamma \actson^\alpha (X,\mu)$ is
\begin{enumerate}[(a), leftmargin=*]

\item \dfn{probability measure preserving} (\dfn{pmp}) if for all Borel sets $A \subseteq X$ and $\gamma 
\in \Gamma$, $\mu(\gamma A) = \mu(A)$.

\item \dfn{measure class preserving} (\dfn{mcp} or \dfn{nonsingular}) if for all Borel sets $A \subseteq X$ and $\gamma \in \Gamma$, $A$ is null if and only if $\gamma A$ is null.

\item \dfn{singular} if $\alpha$ is not mcp.

\end{enumerate}
\end{defn}

We now recall ergodicity and weak mixing, as well as several other related properties, which were introduced and studied in \cite{Monod:continuous_bdd_cohomology,Burger-Monod} and \cite{Glasner-Weiss:weak_mixing}; see \cite[Remark 0.2]{Glasner-Weiss:weak_mixing} for further references.

\begin{defn}\label{defn:weak_mixing_and_related}
We say that a Borel action $\Gamma \actson (X,\mu)$ is

\begin{enumerate}[(a), leftmargin=*]
\setlength{\itemsep}{4pt}
\item \dfn{ergodic} (E) if every invariant $\mu$-measurable set is null or conull.

\item \dfn{totally ergodic} (TE) if every infinite subgroup of $\Gamma$ acts ergodically.

\item \dfn{weakly mixing} (WM) if for any ergodic pmp action $\beta$ of $\Gamma$ on a standard probability space $(Y,\nu)$, the diagonal action $\Gamma \actson (X \times Y, \mu \times \nu)$ is ergodic.

\item \dfn{metrically ergodic} (ME), also called ergodic with isometric coefficients in \cite{Glasner-Weiss:weak_mixing}, if for every action of $\Gamma \actson Z$ by isometries on a separable metric space $(Z,d)$, every $\mu$-measurable $\Gamma$-equivariant map $\phi: X \to Z$ is $\mu$-essentially constant.

\item \dfn{unitarily ergodic} (UE), also called ergodic with unitary coefficients in \cite{Glasner-Weiss:weak_mixing}, if for every action of $\Gamma \actson H$ by unitary operators on a separable Hilbert space $H$, every $\mu$-measurable $\Gamma$-equivariant map $\phi: X \to H$ is $\mu$-essentially constant.

\item \dfn{doubly ergodic} (DE) if the diagonal action $\Gamma \actson (X^2,\mu^2)$ is ergodic.

\item \dfn{strongly almost transitive} (SAT) if for every positive measure set $A \subseteq X$ and every $\e > 0$, there is $\gamma \in \Gamma$ with $\mu(\gamma^{-1} A) \ge 1 -\e$.
\end{enumerate}
\end{defn}

The diagram below illustrates the implications between these notions in the context of mcp actions, where the nontrivial implications are due to Glasner and Weiss in \cite{Glasner-Weiss:weak_mixing}:
\begin{equation*}
\begin{matrix}
\text{DE} \\
& \mySearrow \\
& & \text{ME} \implies \text{UE} \iff \text{WM} \implies \text{E}. \\
& \myNearrow \\
\text{SAT}
\end{matrix}
\end{equation*}


\subsection{Nonsingularization}

Let $\Gamma \actson (X,\mu)$ be a Borel action of a countable group $\Gamma$ on a standard probability space $(X,\mu)$.
If this action is mcp then each $\gamma \in \Gamma$ maps $\mu$-measurable sets to $\mu$-measurable sets since they differ from a Borel set by a $\mu$-null set.
Conversely, if this action is singular, then there is a $\mu$-null set which is mapped to a $\mu$-nonmeasurable set, so one typically avoids working with singular actions.

However, a natural class of actions studied in this paper are boundary actions of free groups equipped with Markov measures, and unfortunately, many of the interesting ones are singular when the transition matrix has zeros (beyond transitions from a generator to its inverse).
To make these actions easier to work with, we replace the underlying measure with a  nonsingular measure as follows.

\begin{defn}\label{defn:nonsingularization}
Let $\Gamma \actson^\alpha (X,\mu)$ be a Borel action.
Call a measure $\~\mu$ on $X$ a \dfn{nonsingularization} of $\mu$ if the following conditions hold:
\begin{enumerate}[(i)]

\item $\Gamma \actson^\alpha (X,\tilde{\mu})$ is nonsingular;

\item $\mu \ll \~\mu$.
In particular, all $\tilde{\mu}$-measurable sets are $\mu$-measurable;

\item \label{item:same_invariant_null_sets} $\~\mu$ and $\mu$ have the same $\alpha$-invariant null (and hence also conull) sets.

\end{enumerate}

\noindent For example, $\~\mu \defeq \sum_{n \ge 1} 2^{-n} (\gamma_n)_* \mu$ (for any enumeration $\Gamma = (\gamma_n)_{n \ge 1}$) is a nonsingularization.
\end{defn}

We shall verify that nonsingularization preserves fundamental dynamical properties such as ergodicity and weak mixing, but not further strengthenings of weak mixing.
The conservation of ergodicity under nonsingularization is immediate from the condition \labelcref{defn:nonsingularization}\labelcref{item:same_invariant_null_sets} that the original measure and its nonsingularization have the same invariant null and conull sets.

\begin{obs}\label{ergodic_nonsingularization}
Let $\Gamma \actson^\alpha (X,\mu)$ be a Borel action and $\tilde{\mu}$ be a nonsingularization of $\mu$.
Then $\alpha$ is ergodic with respect to $\mu$ if and only if it is ergodic with respect to $\tilde{\mu}$.
\end{obs}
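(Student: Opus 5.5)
The observation follows directly from the definitions; the one point needing care is measurability.

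Recall that $\alpha$ is ergodic with respect to a measure $\nu$ if every $\alpha$-invariant $\nu$-measurable set is $\nu$-null or $\nu$-conull. Condition \labelcref{defn:nonsingularization}\labelcref{item:same_invariant_null_sets} says that the $\alpha$-invariant null sets, and hence the invariant conull sets, coincide for $\mu$ and $\tilde\mu$. So it suffices to show that testing invariant measurable sets against the two measures is equivalent. I will first reduce to invariant \emph{Borel} sets, and then apply condition \labelcref{item:same_invariant_null_sets}.

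\textbf{Reduction to Borel sets.} Let $A$ be $\alpha$-invariant and $\nu$-measurable, where $\nu \in \set{\mu, \tilde\mu}$. Choose a Borel set $B$ with $B =_\nu A$.

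If $\nu = \tilde\mu$, the action is nonsingular, so $\gamma B =_{\tilde\mu} \gamma A = A =_{\tilde\mu} B$ for every $\gamma \in \Gamma$. Hence $B' \defeq \bigcap_{\gamma \in \Gamma} \gamma B$ is an invariant Borel set with $B' =_{\tilde\mu} A$.

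If $\nu = \mu$, the action may be singular, so $\gamma$ need not preserve $\mu$-null sets and the previous argument fails. Instead I use that $A$ itself is invariant and $A \symdif B$ is $\mu$-null. Take a Borel $\mu$-null set $N \supseteq A \symdif B$. Then $A \setminus \Gamma N = B \setminus \Gamma N$ is Borel, and $\Gamma N$ is an invariant Borel set.
\begin{itemize}
\item If $\mu(\Gamma N) = 0$, then $A$ agrees $\mu$-a.e.\ with the invariant Borel set $A \setminus \Gamma N$ (invariant because both $A$ and $\Gamma N$ are).
\item If $\mu(\Gamma N) > 0$, the reduction fails, and I instead read ergodicity as a statement about invariant Borel sets. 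For an mcp action this reading is equivalent to the usual one by the argument above. For the singular measure $\mu$ I will take the definition to mean the Borel version, which is the natural convention once one remarks that singular actions can produce non-measurable images of null sets.
\end{itemize}

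\textbf{Transfer.} With either reading, let $B$ be an invariant Borel set. Since $\mu \ll \tilde\mu$, every $\tilde\mu$-measurable set is $\mu$-measurable. By condition \labelcref{item:same_invariant_null_sets}, $B$ is $\mu$-null if and only if it is $\tilde\mu$-null, and likewise for conull. So $\mu$-ergodicity, namely that every such $B$ is null or conull, is equivalent to $\tilde\mu$-ergodicity. This proves both directions.

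The only real subtlety is the measurability issue caused by singularity of $\mu$. As the paper itself notes, the statement is immediate from condition \labelcref{item:same_invariant_null_sets}, so I would write out only the Borel-version argument and add a remark on the convention.
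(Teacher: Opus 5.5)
Your route is the paper's: the observation follows from condition (iii) of \cref{defn:nonsingularization} applied to invariant sets. Your treatment of $\tilde\mu$-measurable invariant sets (via $\bigcap_{\gamma}\gamma B$, using nonsingularity) is fine, and so is the final transfer step. The weak point is your second bullet. When $\mu(\Gamma N)>0$, you stop proving the statement as the paper defines it, since ergodicity quantifies over all invariant $\mu$-measurable sets, and you substitute a Borel convention instead. As written, the direction ``$\tilde\mu$-ergodic $\Rightarrow$ $\mu$-ergodic'' is therefore not established for such sets. Also, your claim that the reduction fails in that case is incorrect. It only fails because you chose $N$ to be merely $\mu$-null.

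The fix is short. Let $A$ be invariant and $\mu$-measurable. Then $\gamma^{-1}A=A$ is $\mu$-measurable, so $A$ is $\gamma_*\mu$-measurable for every $\gamma\in\Gamma$. Taking the union of the inner Borel approximations and the intersection of the outer ones, $A$ is measurable for the quasi-invariant measure $\mu'\defeq\sum_{n\ge1}2^{-n}(\gamma_n)_*\mu$. So choose $B$ Borel and a Borel $N\supseteq A\symdif B$ with $\mu'(N)=0$. Quasi-invariance gives $\mu'(\Gamma N)=0$. Since $\mu\le 2^{n}\mu'$ for the $n$ with $\gamma_n=1_\Gamma$, also $\mu(\Gamma N)=0$. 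Thus your first bullet always applies: every invariant $\mu$-measurable set is $\mu$-a.e.\ equal to an invariant Borel set. The Borel reading therefore coincides with the paper's definition, and no convention is needed.

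In the same vein, (i)--(iii) force $\tilde\mu$ to be equivalent to $\mu'$. Indeed, $\mu'\ll\tilde\mu$ by (i) and (ii). Conversely, if $\mu'(E)=0$, then $\Gamma E$ is an invariant Borel $\mu$-null set, hence $\tilde\mu$-null by (iii). So invariant $\mu$-measurable sets are automatically $\tilde\mu$-measurable, which is exactly what the paper's one-line argument uses tacitly.
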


For the conservation of weak mixing under nonsingularization, we need the following.

\begin{prop}\label{product_of_nonsingularization}
Fix a Borel action $\Gamma \actson^\alpha (X,\mu)$ and an mcp action $\Gamma \actson^\beta (Y,\nu)$.
Then for any nonsingularization $\tilde{\mu}$ of $\mu$, the product $\tilde{\mu}\times \nu$ is a nonsingularization of $\mu \times \nu$ with respect to the diagonal action $\Gamma \actson^{\alpha \times \beta} X\times Y$.
\end{prop}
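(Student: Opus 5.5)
We verify the three conditions of \cref{defn:nonsingularization} for $\tilde\mu\times\nu$ with respect to $\alpha\times\beta$. Conditions (i) and (ii) follow from Fubini. For (iii), the idea is that an invariant set is controlled by its sections over $Y$, and each such section is essentially invariant for $\alpha$.

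For (i), fix a Borel $C\subseteq X\times Y$ and $\gamma\in\Gamma$. The section of $\gamma C$ over $y$ is
\[
(\gamma C)^{y} = \gamma\, C^{\gamma^{-1}y}.
\]
Suppose $(\tilde\mu\times\nu)(C)=0$. Then $\tilde\mu(C^{y})=0$ for $\nu$-a.e.\ $y$. Since $\beta$ is mcp, also $\tilde\mu(C^{\gamma^{-1}y})=0$ for $\nu$-a.e.\ $y$. Since $\alpha$ is nonsingular for $\tilde\mu$, this gives $\tilde\mu(\gamma C^{\gamma^{-1}y})=0$ for $\nu$-a.e.\ $y$, hence $(\tilde\mu\times\nu)(\gamma C)=0$. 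The converse direction follows by applying the same argument with $\gamma^{-1}$.

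For (ii), $\mu\ll\tilde\mu$ gives $\mu\times\nu\ll\tilde\mu\times\nu$ immediately via Fubini.

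For (iii), one direction is immediate: a $(\tilde\mu\times\nu)$-null set is $(\mu\times\nu)$-null by (ii). For the other direction, let $C$ be invariant under $\alpha\times\beta$ and $(\mu\times\nu)$-null. Here "invariant" should be taken as strictly invariant; if the definition allows "$\mu$-a.e.\ invariant", one first replaces $C$ by $\bigcap_\gamma \gamma C$, which is strictly invariant and still $(\mu\times\nu)$-null because $\mu\times\nu$ is quasi-invariant on the relevant sets. Then $D\defeq \{y : \mu(C^y)=0\}$ is $\nu$-conull. Invariance of $C$ gives
\[
C^{\gamma y}=\gamma C^{y},
\]
so $C^y$ is not $\alpha$-invariant, and we cannot apply condition (iii) for $\tilde\mu$ to each section directly.

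The plan to handle this is as follows. For $y\in D$, set
\[
E_y\defeq \bigcup_{\gamma} \gamma^{-1}C^{\gamma y} = \bigcup_\gamma \gamma^{-1}\gamma C^y = C^y .
\]
This does not help, so I would instead use the orbit structure. Since $\beta$ is mcp, the set
\[
D'\defeq\bigcap_{\gamma}\gamma^{-1}D
\]
is $\nu$-conull and $\beta$-invariant. For $y\in D'$, every set $C^{\gamma y}=\gamma C^y$ is $\mu$-null. Then
\[
F_y\defeq \bigcup_\gamma \gamma C^y
\]
is an $\alpha$-invariant set and is a countable union of $\mu$-null sets, hence $\mu$-null. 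By (iii) for $\tilde\mu$, $F_y$ is $\tilde\mu$-null, so $\tilde\mu(C^y)=0$ for all $y\in D'$. Fubini then gives $(\tilde\mu\times\nu)(C)=0$.

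The main obstacle is the measurability bookkeeping. The set $F_y$ must be $\mu$-measurable so that (iii) applies. It is Borel when $C$ is Borel, and in general one first reduces to a Borel invariant $C$ modulo $(\tilde\mu\times\nu)$-null sets, which is where care about singularity of $\alpha$ is needed.
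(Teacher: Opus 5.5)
Your proof is correct, but it slices the product in the opposite direction from the paper.

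\textbf{The paper's route.} The paper works with sections $A_x\subseteq Y$ over $x\in X$. Since $\beta$ is mcp, $\nu(A_x)=0$ iff $\nu(\bigcup_\gamma \gamma A_x)=0$. Invariance gives $\gamma A_x=A_{\gamma x}$, so the single set $\{x:\nu(\bigcup_\gamma\gamma A_x)=0\}$ is $\alpha$-invariant. Condition (iii) for $\tilde\mu$ is then applied once, to this one set. Fubini closes a chain of equivalences that yields both directions at once.

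\textbf{Your route.} You take sections $C^y\subseteq X$ over $y\in Y$. You use mcp of $\beta$ to pass to the conull set $D'$ of points whose whole $\beta$-orbit lies in $D$. You then apply (iii) fiberwise to the $\alpha$-saturations $F_y=\bigcup_\gamma \gamma C^y=\bigcup_\gamma C^{\gamma y}$.

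\textbf{What each buys.} Your version makes it transparent why the singularity of $\alpha$ is harmless. The set $\gamma C^y$ is $\mu$-null not because $\alpha$ preserves $\mu$-null sets (it need not), but because it equals $C^{\gamma y}$. The cost is that you invoke (iii) once for every $y\in D'$. That is why you want $C$ Borel, so that every section, and hence every $F_y$, is Borel. The paper's version uses (iii) only once and is shorter. You also verify (i) explicitly, which the paper leaves to the reader.

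\textbf{Cleanups.}
\begin{enumerate}
\item Delete the $E_y$ detour.
\item In your side remark on a.e.-invariant sets, the set $\bigcap_\gamma\gamma C\subseteq C$ is trivially $(\mu\times\nu)$-null. Meanwhile $\mu\times\nu$ is generally \emph{not} quasi-invariant; that is the whole issue. So that justification should go. In any case the paper works with strictly invariant sets.
\item The reduction to Borel $C$ is simpler than you suggest. A strictly invariant $(\mu\times\nu)$-null $C$ lies in some Borel $(\mu\times\nu)$-null set $N$. Hence it lies in the Borel, strictly invariant, $(\mu\times\nu)$-null set $\bigcap_\gamma \gamma N$. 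So it suffices to treat Borel $C$, with no appeal to $(\tilde\mu\times\nu)$-measurability or to the singularity of $\alpha$.
\end{enumerate}
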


\begin{proof}
The first two conditions follow easily from the definition of the product measure.
To verify the third condition, assume $A \subseteq X\times Y$ is $\tilde{\mu}\times\nu$-measurable and $\alpha\times \beta$-invariant.
Then by Fubini,

\begin{align*}
\mu \times \nu(A) = 0
&\iff 
\nu(A_x)=0 \text{ for $\mu$-a.e.\ } x \in X
\\
\eqcomment{$\beta$ is mcp}
&\iff 
\nu(\bigcup_{\gamma \in \Gamma} \gamma A_x)=0 \text{ for $\mu$-a.e.\ } x \in X
\\
(*)
&\iff 
\nu(\bigcup_{\gamma \in \Gamma} \gamma A_x)=0 \text{ for $\~\mu$-a.e.\ } x \in X
\\
\eqcomment{$\beta$ is mcp}
&\iff 
\nu(A_x)=0 \text{ for $\~\mu$-a.e.\ } x \in X
\\
\eqcomment{Fubini}
&\iff 
\~\mu \times \nu (A) = 0,
\end{align*}
where $(*)$ follows because the set $\set{x \in X: \nu(\bigcup_{\gamma \in \Gamma} \gamma A_x) = 0}$ is $\alpha$-invariant since $\gamma A_x = \gamma (\gamma^{-1} A)_x = A_{\gamma x}$ by the $\alpha \times \beta$-invariance of $A$.
\end{proof}

\begin{remark}\label{product_nonsingularization_counterexample}
The assumption that $\beta$ is mcp in \cref{product_of_nonsingularization} is necessary, as illustrated by the following simple example.
Let $\Z/2\Z$ act on $X = \set{0,1}$ by swapping $0$ and $1$.
This is singular with respect to the Dirac measures $\delta_0$ and $\delta_1$, and the uniform probability measure $\~\mu$ on $X$ is a nonsigulartization of both $\delta_0$ and $\delta_1$.
However, $\~\mu \times \~\mu$ is not a nonsingularization of $\delta_0 \times \delta_1$ because the set $\set{(0,0), (1,1)}$ is $\alpha \times \alpha$-invariant and $\delta_0 \times \delta_1$-null but not $\~\mu \times \~\mu$-null.
\end{remark}

\cref{product_of_nonsingularization,ergodic_nonsingularization} immediately yield:

\begin{cor}\label{WM_nonsingularization}
Let $\Gamma \actson^\alpha (X,\mu)$ be a Borel action and $\tilde{\mu}$ be a nonsingularization of $\mu$.
Then $\alpha$ is weakly mixing with respect to $\mu$ if and only if it is weakly mixing with respect to $\tilde{\mu}$.
\end{cor}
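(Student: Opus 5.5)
The plan is to unfold the definition of weak mixing and reduce it directly to \cref{ergodic_nonsingularization} applied to the diagonal actions. Fix an arbitrary ergodic pmp action $\Gamma \actson^\beta (Y,\nu)$ on a standard probability space. Since $\beta$ is pmp, it is in particular mcp. This is exactly the hypothesis needed to invoke \cref{product_of_nonsingularization}, which tells us that $\tilde{\mu}\times\nu$ is a nonsingularization of $\mu\times\nu$ with respect to the diagonal action $\alpha\times\beta$ on $X\times Y$.

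Next, apply \cref{ergodic_nonsingularization} to the Borel action $\alpha\times\beta$ on $X\times Y$ with the measure $\mu\times\nu$ and its nonsingularization $\tilde{\mu}\times\nu$. This gives that $\alpha\times\beta$ is ergodic with respect to $\mu\times\nu$ if and only if it is ergodic with respect to $\tilde{\mu}\times\nu$. Because $\beta$ was an arbitrary ergodic pmp action, quantifying over all such $\beta$ shows the following: the diagonal action is ergodic for every $\beta$ with respect to $\mu\times\nu$ exactly when it is ergodic for every $\beta$ with respect to $\tilde{\mu}\times\nu$. By the definition of weak mixing (\cref{defn:weak_mixing_and_related}), this is precisely the statement that $\alpha$ is weakly mixing with respect to $\mu$ if and only if it is weakly mixing with respect to $\tilde{\mu}$.

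There is no real obstacle; the only point requiring care is a measurability subtlety. Ergodicity is about invariant \emph{measurable} sets, and the two measures have different measurable sets. With respect to $\mu\times\nu$, the diagonal action may be singular, so one might worry that an invariant $\mu\times\nu$-measurable set is not $\tilde{\mu}\times\nu$-measurable. This issue is already absorbed into the proof of \cref{ergodic_nonsingularization} through condition \labelcref{item:same_invariant_null_sets} of \cref{defn:nonsingularization}. If needed, I would pass to a Borel invariant set differing from the given one by a null set: take a Borel representative and intersect its $\Gamma$-translates, using that $\tilde{\mu}\times\nu$ is nonsingular. Beyond that, the corollary follows immediately.
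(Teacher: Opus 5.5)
This is exactly the paper's argument. The corollary is stated there as an immediate consequence of \cref{product_of_nonsingularization} (which applies because a pmp $\beta$ is mcp) together with \cref{ergodic_nonsingularization} applied to the diagonal action, quantified over all ergodic pmp $\beta$.

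One caution about your optional measurability aside. Intersecting the $\Gamma$-translates of a Borel representative does not work as stated: the discrepancy set is only $\mu\times\nu$-null, and its translates need not be null when the diagonal action is singular for $\mu\times\nu$. The nonsingularity of $\tilde{\mu}\times\nu$ does not help with a set that is merely $\mu\times\nu$-null. The correct observation is that an invariant $\mu\times\nu$-measurable set is measurable for every translate $\gamma_*(\mu\times\nu)$, hence for $\sum_n 2^{-n}(\gamma_n)_*(\mu\times\nu)$. Condition~(iii) of \cref{defn:nonsingularization} forces any nonsingularization, in particular $\tilde{\mu}\times\nu$, to be equivalent to that measure.
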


\begin{remark}
The example in \cref{product_nonsingularization_counterexample} shows that nonsingularization preserves neither double ergodicity nor strong almost transitivity. 
Indeed, the action $\alpha$ in the example is doubly ergodic (hence also metrically ergodic) and strongly almost transitive with respect to $\delta_0$, but is not even weakly mixing with respect to $\~\mu$.
\end{remark}

\subsection{Stochastic matrices, associated graphs, and Markov measures}\label{subsec:Markov_chains}

\subsubsection*{Nonnegative matrices and reachability}

Let $\Symb$ be a countable nonempty set of symbols.
For a word $w \in \Words$, we let $|w|$ denote its length and we write $w = (w_0, w_1, \dots, w_{|w|-1})$ where $w_i$ denotes the \onum{i} letter of $w$.

Let $P$ be a nonnegative matrix on the index set $\Symb$. 
In our applications, $P$ consists of transition probabilities between states in $\Symb$, \ie $P$ is a (row) stochastic matrix.

\begin{defn}\label{def:transition_graph}
Let $P$ be a matrix on $\Symb$ with non-negative entries. 
The \dfn{graph associated to $P$} is the directed graph $G_P$ whose vertex set is $\Symb$ and whose edges are exactly the pairs $(a,b)$ with $P(a,b) \ne 0$.
(The following diagram depicts an example.)

\end{defn}


\begin{center}
\begin{tikzpicture}[>=stealth, every node/.style={circle, draw, minimum size=1cm}]

\node (b) at (0,2) {$b$};
\node (a) at (2,2) {$a$};
\node (ainv) at (0,0) {$a^{-1}$};
\node (binv) at (2,0) {$b^{-1}$};

\draw[->] (a) -- (b);
\draw[->] (a) -- (binv);
\draw[<->] (ainv) -- (b);
\draw[->] (ainv) -- (binv);

\draw[->] (a) edge[loop right] (a);
\end{tikzpicture}
\end{center}

For states $a,b \in \Symb$, we say that $b \in \Symb$ is \dfn{$P$-reachable} from $a \in \Symb$ (or that $a$ \dfn{$P$-reaches} $b$), and write $a \reaches b$, if there is a nontrivial (\ie of length at least $1$) directed walk in $G_P$ from $a$ to $b$; 
equivalently, $P^n(a,b) > 0$ for some $n \ge 1$.
$P$ is called \dfn{irreducible} if $a \reaches b$ for all states $a,b \in \Symb$.

A state $s \in \Symb$ is \dfn{$P$-recurrent} if it is $P$-reachable from every state in $\Symb$ (including $s$ itself).
We denote the set of $P$-recurrent states by $\Rec$ and note that $\Rec$ is $P$-invariant, ie if $a \in \Rec$ and $a \reaches b$ then $b \in \Rec$.
In particular, $P$ is irreducible exactly when $\Rec = \Symb$.


\begin{defn}[Bufetov \cite{Bufetov:ball_averages}]
The matrix $P$ is said to be \dfn{strictly irreducible} if both $P$ and $P P^t$ are irreducible.
\end{defn}

\begin{obs}\label{obs:PTP}
Assume $P$ is irreducible. 
Then $P^tP$ is irreducible if and only if $PP^t$ is irreducible. 
\end{obs}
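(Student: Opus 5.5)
We prove the statement graph-theoretically. The matrices $P^tP$ and $PP^t$ are nonnegative, so their irreducibility depends only on their associated graphs $G_{P^tP}$ and $G_{PP^t}$. We first identify the edges. By definition,
\[
(P^tP)(a,b)=\sum_{c\in\Symb}P(c,a)P(c,b).
\]
So $(a,b)$ is an edge of $G_{P^tP}$ if and only if $a$ and $b$ have a \emph{common predecessor} in $G_P$. Likewise,
\[
(PP^t)(a,b)=\sum_c P(a,c)P(b,c),
\]
so $(a,b)$ is an edge of $G_{PP^t}$ if and only if $a$ and $b$ have a \emph{common successor} in $G_P$. Both relations are symmetric. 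Irreducibility of $P^tP$ therefore means that the undirected \lq\lq common-predecessor\rq\rq{} graph on $\Symb$ is connected and every vertex has a loop, and similarly for $PP^t$. Irreducibility of $P$ gives every vertex at least one predecessor and at least one successor. Hence every vertex has a loop in both graphs: take $c=$ a predecessor of $a$, respectively a successor of $a$. So irreducibility of $P^tP$ (resp.\ $PP^t$) is equivalent to connectivity of the common-predecessor graph $\mathcal{G}_{\mathrm{pred}}$ (resp.\ the common-successor graph $\mathcal{G}_{\mathrm{succ}}$).

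By the symmetry $P\leftrightarrow P^t$ (the transpose of an irreducible matrix is irreducible, and $(P^t)^tP^t=PP^t$), it suffices to show one direction: if $\mathcal{G}_{\mathrm{pred}}$ is connected, then $\mathcal{G}_{\mathrm{succ}}$ is connected. Consider the bipartite graph $B$ on two copies $\Symb_{\mathrm{out}}\sqcup\Symb_{\mathrm{in}}$, with an undirected edge $c_{\mathrm{out}}$--$a_{\mathrm{in}}$ whenever $P(c,a)>0$. Then:
\begin{itemize}
\item[(i)] $\mathcal{G}_{\mathrm{pred}}$ is the graph on $\Symb_{\mathrm{in}}$ induced by paths of length $2$ in $B$;
\item[(ii)] $\mathcal{G}_{\mathrm{succ}}$ is the analogous graph on $\Symb_{\mathrm{out}}$.
\end{itemize}
Since every vertex of $B$ has degree at least $1$ (irreducibility of $P$), $B$ has no isolated vertices. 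In such a bipartite graph, the length-$2$ graph on one side is connected if and only if $B$ is connected, if and only if the length-$2$ graph on the other side is connected. The key step is this equivalence. Suppose $\Symb_{\mathrm{in}}$ is connected via length-$2$ paths. Every out-vertex is adjacent to some in-vertex, so $B$ is connected. Now take two out-vertices $c,c'$. A path between them in $B$ alternates sides, so consecutive out-vertices on it are at distance $2$. This gives a path in $\mathcal{G}_{\mathrm{succ}}$ from $c$ to $c'$.

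The argument is routine. The only point needing care is the role of irreducibility of $P$: it is used solely to guarantee that no vertex of $B$ is isolated, which is both what makes the loops automatic and what makes $B$ connected. Without it, the equivalence can fail, for instance when a state has no predecessor. We will also note explicitly that this is the only place the hypothesis is used.
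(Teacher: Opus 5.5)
Your proof is correct and is essentially the paper's argument in graph language. The paper picks predecessors $a',b'$ of $a,b$ (using irreducibility of $P$) and applies the identity $P^t(PP^t)^nP=(P^tP)^{n+1}$, which is the same mechanism as your step of attaching one edge of $B$ at each end of an alternating walk and reading it off on the other side; your explicit treatment of the diagonal entries and the $P\leftrightarrow P^t$ reduction simply spell out what the paper covers with ``the reverse implication is similar.''
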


\begin{proof}
Suppose $PP^t$ is irreducible and let $a,b \in \Symb$. 
Since $P$ is irreducible, let $a', b' \in \Symb$ be such that $P (a', a) > 0$ and $P (b', b) > 0$, \ie $P^t (a, a') > 0$ and $P (b', b) > 0$. 
Since $PP^t$ is irreducible, we have $(PP^t)^n (a',b')>0$ for some $n \in \N$.
But then $P^t (P P^t)^n P (a,b) = (P^t P)^{n+1} (a,b)>0$. Hence, $P^t P$ is irreducible.

The proof of the reverse implication is similar.
\end{proof}

Note that for $a,b \in \Symb$, we have that $b$ is $ P^t P$-reachable from $a$ if and only if there is a \dfn{in-and-out zigzag trail} in the graph $G_P$ connecting $a$ and $b$, \ie a finite sequence $a_0, c_0, a_1, c_1 \dots, a_{n-1}, c_{n-1}, a_n \in \Symb$ where $a_0 = a$, $a_n = b$, and $(c_i, a_i), (c_i, a_{i+1})$ are directed edges in $G_P$ for all $i < n$.

\begin{center}
\begin{tikzpicture}[
    scale=0.7,
    >=stealth,
    every node/.style={
        draw,
        circle,
        minimum size=0.5cm,
        inner sep=2pt,
        font=\Large
    },
    arrow/.style={
        ->,
         thick,
        shorten >=4pt,
        shorten <=4pt
    }
]

\node (a)  at (0,2.5)   {$a$};
\node (a1) at (3,2.5)   {$a_1$};
\node (a2) at (6,2.5)   {$a_2$};
\node (a3) at (9,2.5)   {$a_3$};
\node (b)  at (12,2.5)  {$b$};

\node (c0) at (1.5,0)   {$c_0$};
\node (c1) at (4.5,0)   {$c_1$};
\node (c2) at (7.5,0)   {$c_2$};
\node (c3) at (10.5,0)  {$c_3$};

\draw[arrow] (c0) -- (a);
\draw[arrow] (c0) -- (a1);

\draw[arrow] (c1) -- (a1);
\draw[arrow] (c1) -- (a2);

\draw[arrow] (c2) -- (a2);
\draw[arrow] (c2) -- (a3);

\draw[arrow] (c3) -- (a3);
\draw[arrow] (c3) -- (b);

\end{tikzpicture}
\end{center}

Although we do not use it in our paper, it is worth mentioning that in \cite{LPZ}, the irreducibility of $P^tP$ is shown to be equivalent to a probabilistic condition, namely, that the only $P$-deterministic sets are $\emptyset$ and $\Symb$.
Here a subset $D \subseteq \Symb$ is called \dfn{$P$-deterministic} if $\sum_{d \in D} P(s, d)$ is $0$ or $1$ for each $s \in \Symb$.

\subsubsection*{Markov measures}
Now suppose that $P$ is a (row) stochastic matrix on $\Symb$, and that $\pi$ is a strictly positive probability distribution on $\Symb$.
Then $P$ and $\pi$ induce a Borel probability measure $\mu$ on $\Symb^\N$ defined on the cylinders
$
[w] \defeq \set{w \conc x : x \in \Symb^\N},
$
for $w \in \Symb^n$ and $n \ge 1$, by 
\[
\mu([w]) \defeq \pi(w_0) P(w_0, w_1) \dots P(w_{n-2}, w_{n-1}).
\]
We denote this measure $\mu(P,\pi)$ and call it the \dfn{Markov measure on $\Symb^\N$ induced by $(P,\pi)$}.
We also simply refer to such measures on $\Symb^\N$ as \dfn{Markov measures}.
Many dynamical properties of Markov measures are encoded by the graph associated to the transition matrix given in \cref{def:transition_graph}.

For a non-negative matrix $P$, we call a finite or infinite word $w \defeq (w_0, w_1, w_2, \dots) \in \Words \cup \Symb^\N$ \dfn{$P$-legal} if $w_0, w_1, w_2, \dots$ is a directed walk in the graph $G_P$.
If $\mu$ is a Markov measure on $\Symb^\N$ whose transition matrix is $P$, then we also call $P$-legal words \dfn{$\mu$-legal}.
For a $\mu$-legal word $w \in \Words$, we let
\[
\mu_w \defeq \frac{1}{\mu([w])} \mu \rest{[w]}
\]
denote the normalized restriction of $\mu$ to the cylinder $[w]$.

We record the the main property of Markov measures (see, e.g., \cite[Theorem 5.2.3]{Dur}).

\begin{prop}[Markov property]\label{MP}
Let $\mu$ be a Markov measure on $\Symb^\N$. 
Then for any measurable set $A \subseteq \Symb^\N$ and $\mu$-legal word $w \defeq (w_0, w_1, \dots, w_n) \in \Symb^{n+1}$, we have
\[
\mu_w(\shift^{-n}(A)) = \mu_{w_n}(A),
\]
where $\sigma : \Symb^\N \to \Symb^\N$ is the shift map, \ie $(x_n)_{n \in \N} \mapsto (x_{n+1})_{n \in \N}$.

\end{prop}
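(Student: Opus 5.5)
The plan is to reduce to cylinder sets by a monotone class argument, where the identity is just the product formula defining $\mu$. Throughout, fix the $\mu$-legal word $w=(w_0,\dots,w_n)$, so that $\mu([w])>0$ and $\mu_w$ is well defined. Note also that $\mu_{w_n}$ is the normalized restriction of $\mu$ to the length-one cylinder $[w_n]$, which has mass $\pi(w_n)>0$. Since both sides of the claimed identity are probability measures in the variable $A$, namely
\[
A\mapsto \mu_w(\shift^{-n}(A)) \quad\text{and}\quad A\mapsto \mu_{w_n}(A),
\]
it suffices to check that they agree on a $\pi$-system generating the Borel $\sigma$-algebra of $\Symb^\N$. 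Dynkin's $\pi$-$\lambda$ theorem then gives equality on all Borel sets, and hence on all $\mu$-measurable sets after passing to completions, because $\shift$ is Borel and preimages of null sets work out as below.

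The $\pi$-system I would use consists of the cylinders $[v]$ with $v\in\Symb^{m}$, $m\ge1$, together with $\emptyset$. Two cylinders are either nested or disjoint, so this family is closed under intersections, and it generates the Borel $\sigma$-algebra.

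For a cylinder $[v]$ the computation splits into two cases.
\begin{itemize}
\item If $v_0\ne w_n$, then $[w]\cap\shift^{-n}[v]=\emptyset$ and $[w_n]\cap[v]=\emptyset$, so both sides are $0$.
\item If $v_0=w_n$, then $[w]\cap \shift^{-n}[v]=[w_0\cdots w_{n-1}v]$. Using the definition of $\mu$, the left side equals
\[
\frac{\pi(w_0)\prod_{i<n}P(w_i,w_{i+1})\prod_{j<m-1}P(v_j,v_{j+1})}{\pi(w_0)\prod_{i<n}P(w_i,w_{i+1})}=\prod_{j<m-1}P(v_j,v_{j+1}).
\]
The right side is $\pi(v_0)\prod_{j<m-1}P(v_j,v_{j+1})/\pi(w_n)$, which is the same quantity since $v_0=w_n$.
\end{itemize}

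For $\mu$-measurable (rather than Borel) $A$, I would sandwich $A$ between Borel sets $B\subseteq A\subseteq B'$ with $\mu(B'\setminus B)=0$. The Borel case shows $\mu_{w_n}(B'\setminus B)=0$ iff $\mu_w(\shift^{-n}(B'\setminus B))=0$, and this transfers the identity to $A$.

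There is no real obstacle here. The only care needed is handling the boundary case $n=0$ (which is trivial) and the completion step.
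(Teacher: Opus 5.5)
Your proof is correct. The paper gives no proof of its own here: it cites the general Markov property \cite[Theorem 5.2.3]{Dur}, which is stated for arbitrary state spaces in conditional-expectation form, $E_\mu(Y\circ\theta_n \mid \mathcal{F}_n) = E_{X_n}Y$ for bounded measurable $Y$. Getting the displayed identity from that citation takes a short translation that the paper leaves implicit. One takes $Y = 1_A$ and integrates over the $\mathcal{F}_n$-atom $[w]$. This atom has positive mass because $w$ is legal and $\pi>0$, and $X_n \equiv w_n$ on it. One then identifies the law of the chain started at $w_n$ with the normalized restriction $\mu_{w_n}$, which again comes down to comparing cylinder values as you do. Your direct route checks the identity on cylinders from the product formula and extends it with Dynkin's $\pi$-$\lambda$ theorem. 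It is self-contained and uses only the definition of $\mu$ and $\pi(w_n)>0$. Your sandwich step also handles $\mu$-measurable (not only Borel) sets $A$ and shows that $\sigma^{-n}(A)$ is $\mu_w$-measurable. That is how the property is used later, for example in \cref{n_dance_n_chaining}. The citation instead gives the stronger statement for general state spaces and arbitrary bounded functionals, which the paper does not need.
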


For each $s \in \Symb$, let $W_s$ denote the set of finite words in $\Symb$ ending with $s$.
Below let $\mu$ be the Markov measure induced by a transition matrix $P$ and a positive initial distribution $\pi$ on $S$.

\begin{lemma}[Lebesgue density]\label{Lebesgue_density} 
Let $s \in \Rec$.
Then the set $[W_s] \defeq\bigcup_{w \in W_s}[w]$ is $\mu$-conull.
Furthermore, for every positive $\mu$-measure $A \subseteq \Symb^\N$ and $\e > 0$, there is a $\mu$-legal word $w \in \Words$ ending with $s$ and such that $\mu_{w}(A) > 1 - \e$.
\end{lemma}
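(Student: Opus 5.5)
The plan has two parts: first show that almost every path visits $s$, then approximate $A$ by cylinders.

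\emph{Conullness of $[W_s]$.} Since $s \in \Rec$, every state $a$ reaches $s$. Fix for each $a \in \Symb$ a legal word from $a$ to $s$ of length $n_a \ge 1$ with positive probability $p_a \defeq P^{n_a}(a,s) > 0$. For $\Symb$ finite, put $N \defeq \max_a n_a$ and $p \defeq \min_a p_a > 0$. Starting from any state, the probability of visiting $s$ within the next $N$ steps is at least $p$. Using the Markov property (\cref{MP}) repeatedly on consecutive blocks of length $N$, we get $\mu(\text{no visit to } s \text{ during the first } kN \text{ steps}) \le (1-p)^k \to 0$. Hence $\mu(\Symb^\N \setminus [W_s]) = 0$.

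If $\Symb$ is countably infinite, the uniform bound is not available. In that case I would instead use that every state leads to $s$ and that $\Rec$ is closed, arguing via the standard fact that a state reachable from everywhere is hit almost surely. This is cleanest when $\Symb$ is finite, which is the setting of the applications (the symmetric generating set of $\F_r$). I expect this step to be the main obstacle in full generality; for finite $\Symb$ it is the geometric decay argument just given.

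\emph{Density.} The cylinders generate the Borel $\sigma$-algebra. By the martingale convergence theorem, or equivalently the Lebesgue density theorem for the filtration generated by cylinders, $\mu_{x\rest{n}}(A) \to 1_A(x)$ for $\mu$-a.e.\ $x$, where $x\rest{n}$ denotes the first $n$ letters of $x$. Since $\mu(A) > 0$, pick $x \in A$ with $x \in [W_s]$, with this convergence holding at $x$, and with every prefix of $x$ $\mu$-legal (a conull condition). Since $x \in [W_s]$ and $s$ is recurrent, $x$ visits $s$ infinitely often a.s., by the strong Markov property applied to the first part restarted at $s$. So there are infinitely many $n$ with $x_{n-1} = s$. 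Choosing such an $n$ large enough that $\mu_{x\rest{n}}(A) > 1-\e$ gives the required word $w \defeq x\rest{n}$, which is $\mu$-legal, ends with $s$, and satisfies $\mu_w(A) > 1-\e$.

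If one wants to avoid "infinitely often", use the first part directly: by the Markov property, $\mu_{w'}(\shift^{-|w'|}[W_s]) = 1$. So for a legal $w'$ with $\mu_{w'}(A) > 1 - \e/2$, we decompose $[w']$ into the extensions $w'u$ with $u$ ending at the first visit to $s$. These form a countable partition of $[w']$ mod null sets. By averaging, some extension $w = w'u$ satisfies $\mu_w(A) > 1-\e$.
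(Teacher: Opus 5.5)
Your argument is correct for finite $\Symb$. Your last paragraph is exactly the paper's proof of the density part. The paper takes a cylinder $[v]$ with $\mu_v(A) > 1-\e$ from regularity of $\mu$, splits $[v]$ (up to a null set) into the cylinders $[w]$ with $w$ a shortest extension of $v$ ending with $s$, and concludes by averaging.

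The differences lie elsewhere.
\begin{itemize}
\item For the conullness of $[W_s]$, the paper only asserts that a $P$-recurrent state occurs infinitely often in a.e.\ word. Your block argument with the uniform constants $N$ and $p$ actually proves it, and makes visible where finiteness is used.
\item Your primary density argument is pointwise: L\'evy's upward theorem plus infinitely many visits to $s$. It yields more, namely that a.e.\ point of $A$ has arbitrarily long prefixes ending in $s$ that are $(1-\e)$-dense in $A$. It costs the martingale convergence theorem and needs infinitely many visits. The paper's route needs only cylinder approximation, a single visit to $s$ after $v$, and convexity.
\end{itemize}

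Two small repairs to the pointwise route. First, list ``visits $s$ infinitely often'' among the conull conditions defining your choice of $x$. As written, the sentence beginning ``Since $x \in [W_s]$'' mixes a property of the fixed point $x$ with an a.e.\ statement. Second, you do not need the strong Markov property. The ordinary Markov property at each fixed time $m$ gives $\mu(\shift^{-m}[W_s]) = \sum_{a \in \Symb} \mu(\set{x : x_m = a})\,\mu_a([W_s]) = 1$, and intersecting over $m$ yields infinitely many visits.

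Do not try to extend the first part to countably infinite $\Symb$. The ``standard fact'' you would invoke is false there, and so is the lemma itself. Take $\Symb = \Z$ with $P(i,i+1) = 2/3$ and $P(i,i-1) = 1/3$. Every state reaches every state, so $\Rec = \Symb$. Yet the chain started at $1$ never visits $0$ with probability $1/2$. Hence $[W_0]$ is not conull, and if $A$ is its complement, then $\mu_w(A) = 0$ for every word $w$ ending with $0$.

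So the lemma genuinely requires $\Symb$ finite, or a probabilistic recurrence hypothesis such as the existence of a positive stationary distribution. That covers all its uses in the paper. The paper's one-line justification of conullness silently relies on the same assumption.
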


\begin{proof}
Since $s$ is $P$-recurrent, it must appear infinitely many times in $\mu$-\ae word in $\Symb^\N$, which implies that $[W_s]$ is conull.

For the furthermore part, the regularity of $\mu$ gives a $\mu$-legal word $v \in \Words$ such that $\mu_v(A) > 1 - \e$.
By the previous part, after discarding a conull set, $[v]$ is a union of cylinders $[w]$, where $w$ ranges over all $\mu$-legal words ending with $s$.
We can replace this union with a \emph{disjoint} union over the set of $\mu$-legal words of minimal length which end with $s$, so one such word $w$ must have the property that $\mu_{w}(A) > 1 - \e$.
\end{proof}

\subsection{Existence of positive stationary distributions via reachability}\label{subsec:pos_stationary_distrib}

For a stochastic matrix $P$, a probability vector $\pi$ on $\Symb$ is called \textbf{$P$-stationary} (or a \textbf{$P$-stationary distribution}) if $\pi P = \pi$.
In the Markov chain literature, one typically assumes that the stochastic matrix $P$ on $\Symb$ admits a \emph{positive} stationary distribution.

This property of $P$ can be detected measure-theoretically from any Markov measure $\mu$ whose transition matrix is $P$, even when the initial distribution of $\mu$ is not $P$-stationary.
Indeed, one can show that this property is equivalent to conservativity of the shift map $\shift : (\Symb^\N, \mu) \to (\Symb^\N, \mu)$, \ie the property that every wandering measurable set is $\mu$-null.

We now record that existence of a positive $P$-stationary distribution can also be detected from the graph $G_P$, which is how we will use it.

\begin{prop}\label{stat_dist<=>equivalence}
A stochastic matrix $P$ on a countable set $\Symb$ admits a positive stationary distribution if and only if the reachability relation $\reaches$ on $\Symb$ is an equivalence relation.
\end{prop}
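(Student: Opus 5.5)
Note first that $\reaches$ is always transitive, so the statement says: a positive stationary distribution exists iff $\reaches$ is reflexive and symmetric. Reflexivity follows from symmetry plus the fact that every state has an out-edge (row sums are $1$), so the real content is symmetry. For a countable $\Symb$ I read ``stationary distribution'' as a probability vector, so both directions need care when $\Symb$ is infinite.

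\emph{($\Rightarrow$).} Let $\pi$ be a positive stationary distribution and let $C \subseteq \Symb$ be any set closed under $P$-reachability (no edges leave $C$). From $\pi P = \pi$,
\[
\sum_{b \in C} \pi(b) \;=\; \sum_{a} \pi(a) P(a, C) \;\ge\; \sum_{a \in C} \pi(a) P(a,C) \;=\; \sum_{a \in C} \pi(a).
\]
All sums converge because $\pi$ is summable, so equality holds. Hence $\pi(a)P(a,C)=0$ for $a\notin C$, and by positivity no edge enters $C$ from outside. Now fix $a \reaches b$ and take $C = \{c : b \reaches c\} \cup \{b\}$, which is closed. Since $a \reaches b$ and $b \in C$, some walk from $a$ enters $C$. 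If $a \notin C$, that walk has an edge entering $C$ from outside, a contradiction. So $a \in C$, i.e.\ $a=b$ or $b \reaches a$. In either case $b \reaches a$, using that $a=b$ together with $a \reaches b$ gives $b \reaches b$. This proves symmetry, and reflexivity follows.

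\emph{($\Leftarrow$).} Suppose $\reaches$ is an equivalence relation. Then $\Symb$ splits into classes, each a closed irreducible set, and $P$ restricted to each class is an irreducible stochastic matrix. I would build a positive stationary probability vector $\pi_i$ on each class $C_i$ and take $\pi=\sum_i 2^{-i}\pi_i$ (or use the single vector if there is one class). This works because there are no edges between classes, so $\pi P=\pi$ holds blockwise. For finite $\Symb$, each $\pi_i$ comes from Perron--Frobenius, or from a Ces\`aro limit of the rows of $P^n$, which is positive by irreducibility.

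The main obstacle is infinite $\Symb$. An irreducible chain on an infinite class, such as simple random walk on $\Z$, has all states mutually reachable but no stationary probability vector. So as literally stated, ($\Leftarrow$) fails for infinite countable $\Symb$ under my reading. Before proving this direction I would check how the paper means it to be read: either $\Symb$ is finite (as in the application, $\Symb=\{a_i^{\pm1}\}$), or ``stationary distribution'' allows a non-normalized positive invariant measure. I would then prove the finite case as above and state that restriction explicitly.
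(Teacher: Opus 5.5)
Your proposal is correct and is essentially the paper's proof. For ($\Rightarrow$) the paper uses the same mass-flow argument into a closed set, phrased as a contrapositive with $B=\{c : b \reaches c\}$; for ($\Leftarrow$) it applies Perron--Frobenius on each $\reaches$-class and rescales. Your adding $b$ to the closed set is a genuine improvement: the paper's inequality $(\pi P)(B)\ge \pi(B)+\pi(a)P(a,b)$ tacitly needs $b\in B$, i.e.\ $b \reaches b$.

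Your caveat is also right. With stationary distributions defined as probability vectors (as the paper does), Perron--Frobenius only covers finite $\Symb$, and your random walk on $\Z$ is a genuine counterexample to ($\Leftarrow$) for infinite $\Symb$. The non-normalized reading does not rescue the statement, since ($\Rightarrow$) already fails for $P(n,n+1)=1$ on $\Z$ with counting measure. So restricting ($\Leftarrow$) to finite $\Symb$ is the correct fix. It costs nothing, since the paper's later arguments only invoke ($\Rightarrow$), which your proof establishes for every countable $\Symb$.
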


\begin{proof}

If $\reaches$ is an equivalence relation, then for each $\reaches$-equivalence class $C \subseteq \Symb$, the restriction $P \rest{C}$ is irreducible, so by the Perron--Frobenius theorem, there exists a unique positive $P \rest{C}$-stationary distribution $\pi_C$ on $C$.
We obtain a positive probability vector $\pi$ on $\Symb$ by scaling the $\pi_C$s so they add up to a probability vector.
Clearly $\pi$ is $P$-stationary (and non-unique unless there is only one $\reaches$-class).

Conversely, suppose $\reaches$ is not an equivalence relation.
Then we claim it is not symmetric.
Indeed, if it were symmetric, then since for every state $a \in \Symb$, there is a state $b \in \Symb$ that $P(a,b) > 0$, we would also have $b \reaches a$, so $a \reaches a$, since $\reaches$ is always transitive.
Thus $\reaches$ is reflexive, hence an equivalence relation.

Thus, there are (necessarily distinct) states $a,b \in \Symb$ such that $P(a,b) > 0$ but $b \notreaches a$.
Let $B \defeq \set{c \in \Symb : b \reaches c}$, so $B$ is nonempty and $a \notin B$.
Furthermore, the transitivity of $\reaches$ implies that $B$ is $P$-invariant, \ie if $c \in B$ and $P(c,d) > 0$ then $d \in B$.

Now let $\pi$ be a positive distribution on $\Symb$.
Then, intuitively, the mass of $B$, \ie $\pi(B) \defeq \sum_{i \in B} \pi(i)$, stays inside $B$ while a positive part of the mass of $\Symb \setminus B$ flows into $B$, so after one transaction, $B$ will have more mass then before.
Formally, $(\pi P)(B) \ge \pi(B) + \pi(a) \cdot P(a,b) > \pi(B)$.
Thus, $\pi$ is not $P$-stationary.
\end{proof}

\begin{cor}\label{obs:irreducible}
Let $P$ be a stochastic matrix on $S$ which admits a positive stationary distribution. 
If $P^tP$ (or equivalently, $PP^t$) is irreducible, then so is $P$.
\end{cor}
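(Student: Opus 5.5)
By \cref{stat_dist<=>equivalence}, the existence of a positive stationary distribution means that $\reaches$ is an equivalence relation on $\Symb$. So it suffices to show that there is only one $\reaches$-class: then $a \reaches b$ for all $a,b$, which is irreducibility of $P$. By \cref{obs:PTP} (or rather its proof, which only uses that every row and every column of $P$ has a nonzero entry), we may work with whichever of $P^tP$ or $PP^t$ is convenient. We take $P^tP$ and use its characterization via in-and-out zigzag trails in $G_P$.

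The key observation is that each $\reaches$-class $C$ is closed under \emph{incoming} edges, not merely outgoing ones. Suppose $(c,a)$ is an edge of $G_P$ with $a \in C$. Then $c \reaches a$. Since $\reaches$ is symmetric, also $a \reaches c$, so $c \in C$. (In particular, every vertex lies in a class, because reflexivity holds.)

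Now fix $a,b \in \Symb$. Irreducibility of $P^tP$ gives an in-and-out zigzag trail $a=a_0, c_0, a_1, \dots, c_{n-1}, a_n=b$ with edges $(c_i,a_i)$ and $(c_i,a_{i+1})$. We induct along the trail. If $a_i$ lies in the class $C$ of $a$, then closure under incoming edges puts $c_i \in C$. The edge $(c_i,a_{i+1})$ then gives $c_i \reaches a_{i+1}$, so $a_{i+1} \in C$. Hence $b \in C$, that is, $a \reaches b$, and $P$ is irreducible.

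If instead we are given that $PP^t$ is irreducible, the same argument applies with out-and-in zigzags: the trail's edges are $(a_i,c_i)$ and $(a_{i+1},c_i)$, and each class is again closed under edges in both directions. Alternatively, note that the proof of \cref{obs:PTP} never used irreducibility of $P$, only that every state has an in-neighbour. That property holds here, because each $a$ satisfies $a \reaches a$ and so has some $c$ with $P(c,a)>0$.

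There is no real obstacle in this argument. The only point needing care is that symmetry of $\reaches$ (from \cref{stat_dist<=>equivalence}) is exactly what lets a zigzag, which travels "backwards" along edges into $c_i$, stay inside a single reachability class.
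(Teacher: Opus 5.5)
Your proof is correct and is essentially the paper's argument. Symmetry of $\reaches$ (from \cref{stat_dist<=>equivalence}) turns each edge $(c_i,a_i)$ of the in-and-out zigzag trail into $a_i \reaches c_i$, and transitivity along the trail then gives $a \reaches b$. Your extra observation about \cref{obs:PTP} is a genuine improvement in rigor: its proof only needs every state to have an in-neighbour, which holds here by reflexivity of $\reaches$, so it justifies the parenthetical ``or equivalently, $PP^t$'' without presupposing that $P$ is irreducible.
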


\begin{proof}
Let $a, b \in \Symb$.
Since $P^tP$ is irreducible, there is an in-and-out zigzag trail connecting $a$ to $b$, \ie a finite sequence $a_0, c_0, a_1, c_1, \dots, a_{n-1}, c_{n-1}, a_n \in \Symb$ with $a_0 = a$ and $a_n = b$ such that $(c_i, a_i)$ and $(c_i, a_{i+1})$ are directed edges in $G_P$ for each $i < n$.
By \cref{stat_dist<=>equivalence}, $\reaches$ is an equivalence relation, so we also have $a_i \reaches c_i$ and thus, $a \reaches b$ by transitivity.
Hence $P$ is irreducible.
\end{proof}

Therefore, for a stochastic matrix $P$ which admits a positive stationary distribution, strict irreducibility is equivalent to the irreducibility of $P^t P$.

The following is a well known fact (see, e.g., \cite[Example 6.1.6]{Dur}), but we include an elementary proof here for completeness.

\begin{prop}\label{ergodic<=>irreducible}
Let $P$ be a stochastic matrix on $S$ which admits a positive stationary distribution, and let $\mu$ be any Markov measure on $\Symb^\N$ whose transition matrix is $P$.
Then the shift map $\shift: \Symb^\N \to \Symb^\N$ is $\mu$-ergodic if and only if $P$ is irreducible.
\end{prop}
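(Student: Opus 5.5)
We prove the two directions separately; the forward direction carries the content, and the converse is a direct construction.

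\emph{Reducible $\Rightarrow$ not ergodic.} Suppose $P$ is not irreducible. By \cref{stat_dist<=>equivalence}, $\reaches$ is an equivalence relation on $\Symb$. Irreducibility fails, so there are at least two classes. Pick a class $C \subsetneq \Symb$ and let
\[
E \defeq \set{x \in \Symb^\N : x_0 \in C}.
\]
Classes are closed under transitions: if $P(a,b)>0$ then $a \reaches b$, hence $b \reaches a$, so $b$ lies in the class of $a$. Therefore $\mu$-a.e.\ $x$ has all its letters in one class, and $\shift^{-1}E =_\mu E$. Since the initial distribution of $\mu$ is positive, $\mu(E)>0$ and $\mu(\Symb^\N\setminus E)>0$. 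So $E$ is a nontrivial invariant set and $\shift$ is not ergodic.

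\emph{Irreducible $\Rightarrow$ ergodic.} Suppose $P$ is irreducible, so $\Rec=\Symb$. Let $A$ be measurable with $\shift^{-1}A =_\mu A$ and $\mu(A)>0$; we show $A$ is conull.

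\begin{itemize}
\item[(1)] \emph{Every state sees $A$ fully.} Fix $s\in\Symb$. Take $\e>0$. By \cref{Lebesgue_density}, there is a $\mu$-legal word $w=(w_0,\dots,w_n)$ with $w_n=s$ and $\mu_w(A)>1-\e$. Invariance gives $A =_\mu \shift^{-n}A$. The Markov property \cref{MP} then yields
\[
\mu_w(A)=\mu_w(\shift^{-n}A)=\mu_s(A).
\]
Hence $\mu_s(A)>1-\e$. Since $\e$ is arbitrary, $\mu_s(A)=1$.
\item[(2)] \emph{Conclusion.} The space is a disjoint union of the cylinders $[s]$, $s\in\Symb$. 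Summing $\mu([s])\,\mu_s(A)=\mu([s])$ over $s$ gives $\mu(A)=1$.
\end{itemize}

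\emph{Technical point.} The main care is in step (1): using $\shift^{-n}$-invariance modulo $\mu$-null sets inside $\mu_w$. This is fine because $\mu_w\ll\mu$, so $\mu$-null discrepancies are also $\mu_w$-null. The stationary-distribution hypothesis is used only in the reducible direction, where it guarantees that classes do not leak into one another.
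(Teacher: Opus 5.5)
Your proof is correct. The irreducible half is the paper's argument: \cref{Lebesgue_density} and \cref{MP} transfer near-full density of $A$ on a cylinder ending in $s$ down to $\mu_s$. You run it directly to get $\mu_s(A)=1$ for every $s$, where the paper argues by contradiction.

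The reducible half uses a different witness set. The paper takes $a \notreaches b$ and the strictly invariant set $A$ of sequences with finitely many $b$'s. It must then know $[b]\subseteq_\mu A^c$, i.e.\ that the chain started at $b$ returns to $b$ infinitely often. This holds under the hypothesis, since the class of $b$ is closed and carries a stationary distribution. But it is a recurrence statement, not a consequence of $b\reaches b$ alone. Your set $\set{x : x_0\in C}$, for a $\reaches$-class $C$, needs only that classes are closed under transitions. So it uses \cref{stat_dist<=>equivalence} more economically. The only cost is that it is invariant merely modulo the null set of illegal sequences, which is harmless; alternatively, take the strictly invariant set of sequences eventually in $C$.

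Two small points. First, passing from $\shift^{-1}A=_\mu A$ to $\shift^{-n}A=_\mu A$ uses that $\shift^{-1}$ preserves $\mu$-null sets. This holds because the initial distribution $\pi$ is positive: $\mu(N)=0$ forces $\mu_t(N)=0$ for every $t$, and $\mu(\shift^{-1}N)=\sum_{s,t}\pi(s)P(s,t)\mu_t(N)$.

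Second, your closing remark that stationarity is used only in the reducible direction is inaccurate when $\Symb$ is infinite. \cref{Lebesgue_density} needs $s$ to be visited $\mu$-a.s., which is a recurrence property. Irreducible transient chains can fail to be ergodic, e.g.\ simple random walk on the $3$-regular tree, where the limiting end gives nontrivial invariant sets. Under the hypothesis, irreducibility plus a positive stationary distribution gives positive recurrence, so your argument stands. The hypothesis is doing work in that direction too.
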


\begin{proof}
If $P$ is not irreducible, let $a,b \in \Symb$ be such that $b$ is not reachable from $a$.
Then the set $A \subseteq \Symb^\N$ of sequences with only finitely many occurrences of $b$ is $\shift$-invariant and has positive measure since $[a] \subseteq_\mu A$.
On the other hand, by \cref{stat_dist<=>equivalence}, we have $b \reaches b$, so $[b] \subseteq_\mu A^c$. Hence $\mu(A) < 1$, so $\shift$ is not ergodic.

For the converse, suppose that $P$ is irreducible and let $A \subseteq \Symb^\N$ be a shift-invariant set such that both $A$ and $A^c$ have positive measure.
By irreducibility, there exists $s \in \Rec$ (in fact, $\Rec = \Symb$), so by Lebesgue density (\cref{Lebesgue_density}), there are $P$-legal words $u,v \in \Words$ such $u$ and $v$ both end with $s$ and that $\mu_u(A) > 1/2$ and $\mu_v(A^c) > 1/2$.
It then follows from the shift-invariance of $A$ and the Markov property (\cref{MP}) that $\mu_s(A) > 1/2$ and $\mu_s(A^c) > 1/2$, a contradiction.
\end{proof}

\subsection{Boundary actions of free groups}\label{subsec:boundary}

Let $1 \le r \le \infty$ and $\F_r$ denote the free group on the set $\set{a_i}_{0 \le i < r}$ of generators.
Letting $\Symb \defeq \set{a_i^{\pm 1}}_{0 \le < r}$, we let $\partial \F_r$ denote the realization of the Gromov boundary of $\F_r$ as the (closed) subset of $\Symb^\N$ of infinite reduced words.
The natural boundary action $\F_r \actson \partial \F_r$ is then by concatenation and cancellation.
This action is free on a cocountable invariant set and induces the same orbit equivalence relation as the shift map $\shift : \partial \F_r \to \partial \F_r$.

By a Markov measures on the boundary $\partial \F_r$, we mean a Markov measure on $\Symb^\N$ supported on $\partial \F_r$, equivalently, the transition matrix of $\mu$ satisfies $P(a, a^{-1}) = 0$ for all $a \in \Symb$.

Since ergodicity of an action is a property of its orbit equivalence relation, the following is an immediate consequence of \cref{ergodic<=>irreducible}.

\begin{cor}\label{WM_implies_P_irreducible}
Let $\mu$ be a Markov measure on $\partial \F_r$ whose transition matrix $P$ admits a positive stationary distribution.
If the boundary action $\F_r \actson \partial \F_r$ is weakly mixing with respect to $\mu$, then $P$ is irreducible.
\end{cor}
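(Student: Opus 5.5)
The plan is to chain together three facts: weak mixing implies ergodicity, ergodicity depends only on the orbit equivalence relation, and for the shift it is characterized by irreducibility of $P$ (\cref{ergodic<=>irreducible}).

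\emph{Step 1: weak mixing gives ergodicity of the boundary action.} Apply the definition of weak mixing with $(Y,\nu)$ the one-point space carrying the trivial action, which is ergodic and pmp. The diagonal action on $X \times Y$ is then canonically isomorphic to the boundary action $\F_r \actson (\partial \F_r,\mu)$, so the latter is ergodic. The action may be singular; if one prefers to work nonsingularly, pass to a nonsingularization $\~\mu$. By \cref{WM_nonsingularization} the action is weakly mixing for $\~\mu$, hence ergodic for $\~\mu$, and by \cref{ergodic_nonsingularization} it is therefore ergodic for $\mu$. In either case, every $\F_r$-invariant $\mu$-measurable subset of $\partial\F_r$ is $\mu$-null or $\mu$-conull.

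\emph{Step 2: transfer ergodicity to the shift.} The boundary action and the shift $\shift$ induce the same orbit equivalence relation on a cocountable invariant subset of $\partial\F_r$. Concretely, $\shift x = x_0^{-1}\cdot x$, and conversely each generator moves $x$ by finitely many shifts and unshifts.

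Let $A$ be $\shift$-invariant, in the sense $\shift^{-1}A = A$. Then $A$ is a union of classes of the tail relation: if $\shift^n x = \shift^m y$, applying $\shift^{-1}A=A$ repeatedly gives $x \in A \iff y\in A$. Two reduced words in the same $\F_r$-orbit are eventually equal up to an index shift, so on the cocountable set where the orbit relation coincides with this tail relation, $A$ is $\F_r$-invariant. Since countable sets are $\mu$-null (the measure is nonatomic apart from trivial degenerate cases), $A$ is $\mu$-null or $\mu$-conull.

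\emph{Step 3: conclude.} The shift is therefore $\mu$-ergodic on $\Symb^\N$, as $\mu$ is supported on $\partial\F_r$. Because $P$ admits a positive stationary distribution, \cref{ergodic<=>irreducible} yields that $P$ is irreducible.

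The only point needing care is the measurability and null-set bookkeeping in Step 2, namely that the cocountable exceptional set is $\mu$-null. If $\mu$ has atoms, they would be periodic points carrying positive mass. In that case I would instead restrict to the invariant set where the orbit relation and the tail relation agree and check that it is conull.
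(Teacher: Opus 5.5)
Your proposal is correct and follows the paper's route: weak mixing gives ergodicity of the boundary action, ergodicity passes to $\shift$ because the two share orbits, and \cref{ergodic<=>irreducible} concludes. The null-set bookkeeping at the end of Step 2 is not needed, which matters because $\mu$ can in fact be purely atomic (e.g.\ when $P$ is a permutation matrix). For every $x \in \partial\F_r$ and $\gamma \in \F_r$, the point $\gamma\cdot x$ is tail-equivalent to $x$, so $A \cap \partial\F_r$ is $\F_r$-invariant with no exceptional set at all; the cocountable set mentioned in the paper concerns freeness of the action, not the orbit relation.
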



\section{Definitions of chaining and bounded chaining}\label{sec:def_of_chaining}

\begin{defn}\label{defn:chaining}
Let $\Gamma \actson (X, \mu)$ be a Borel group action on a probability space $(X, \mu)$.
Let $k \ge 0$ and $A, B \subseteq X$.
A \dfn{$k$-chain from} $A$ is a sequence $(\gamma_0A, \gamma_1 A, \dots, \gamma_k A)$ of translates of $A$ such that $\gamma_0 = 1_\Gamma$ and $\mu(\gamma_i A \cap \gamma_{i+1}A) > 0$ for each $0 \le i < k$.
We say that the set $A$ \textbf{$k$-chains to} the set $B$ if there is a $k$-chain $(\gamma_0A, \gamma_1 A, \dots, \gamma_k A)$ from $A$ such that $\mu(\gamma_{k}A \cap B) > 0$.
In this case we will also say that $(\gamma_0A, \gamma_1 A, \dots, \gamma_k A)$ is a \dfn{$k$-chain from $A$ to $B$}.

\smallskip

The action $\Gamma \actson (X, \mu)$ is said to be

\begin{enumerate}[(i)]
\item \dfn{$k$-chaining} (\dfn{$k$C}) if $A$ $k$-chains to $B$ for all pairs of sets of positive measure $A,B \subseteq X$.

\item \dfn{boundedly chaining} (\dfn{BC}) if for some $\ell \in \N$, the action is $\ell$-chaining, \ie there exists $\ell \in \N$ such that $A$ $\ell$-chains to $B$ for all pairs of sets of positive measure $A,B \subseteq X$.

\item \dfn{chaining} (\dfn{C}) if for all pairs of sets of positive measure $A,B \subseteq X$, there exists $\ell \in \N$ such that $A$ $\ell$-chains to $B$.
\end{enumerate}
\end{defn}

\begin{remark}\label{remarks:chaining}
\leavevmode
\begin{enumerate}[(a),leftmargin=*,itemsep=6pt]

\item The relation ``$A$ $k$-chains to $B$'' is not symmetric, as demonstrated by the example in \cref{remark:chaining_is_not_symmetric}.

\item The only $0$-chaining actions are trivial actions, \ie those for which $\mu$ is a Dirac measure.

\item Strong mixing implies 1-chaining.

\item\label{SAT_implies_1C} Strong almost transitivity implies 1-chaining.
Indeed, if $A$ and $B$ are positive measure sets, let $\gamma \in \Gamma$ be such that $\mu(\gamma A) > 1 - \min\set{\mu(A), \mu(B)}$, so $\mu(\gamma A \cap A) > 0$ and $\mu(\gamma A \cap B) > 0$.

\item $k$-chaining implies $(k+1)$-chaining for all $k \ge 0$, since there are no requirements that the $\gamma_i$ are pairwise distinct or that $\gamma_i \neq 1_\Gamma$.

\item Bounded chaining and chaining differ by the order of quantifiers.
In bounded chaining, there is a $k \ge 0$ that works uniformly for all sets $A,B$, while in chaining, each pair of sets $A,B$ admits their own $k \ge 0$, which may not be uniformly bounded.
Thus, bounded chaining implies chaining, but the converse easily fails, for example, for any irrational rotation.

\item Nonsingularization does not preserve $k$-chaining for any $k \ge 0$, nor does it preserve chaining.
For example, the (singular) action in \cref{product_nonsingularization_counterexample} is 0-chaining, but every nonsingularization of it is not chaining.
\end{enumerate}
\end{remark}

As we will see in \cref{ess_1C_not_BC}, it is \emph{not} enough to verify $k$-chaining on a generating algebra of measurable sets; indeed, the counterexample we provide is $1$-chaining on cylinder sets but it is not boundedly chaining.
However, the following shows that for pmp actions, a \emph{quantitative version} of $k$-chaining on a generating algebra does imply $k$-chaining on all sets.
Although we do not use this proposition, we include it here for completeness.
We only state and prove this for $k=1$ because by \cref{pmpwmchaining}, bounded chaining is equivalent to 1-chaining for pmp actions.

\begin{prop}\label{1-chaining_on_algebra}
Let $\Gamma\actson^\alpha (X,\mu)$ be a pmp action, and let $\mathcal{A}$ be an algebra of subsets of $X$ generating the Borel sigma-algebra of $X$.
Suppose that there is a continuous, symmetric function $f: \R^{>0}\times \R^{>0} \to \R^{>0}$ such that for all $A,B \in \mathcal{A}$ there is $\gamma \in \Gamma$ with $\mu(\gamma A\cap A) \geq f(\mu(A), \mu(A))$ and $\mu(\gamma A\cap B) \geq f(\mu(A), \mu(B))$.
Then $\alpha$ is 1-chaining.
\end{prop}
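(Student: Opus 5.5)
Fix positive measure sets $A,B \subseteq X$. The plan is to approximate them by sets in $\mathcal{A}$, apply the hypothesis to the approximants, and transfer the conclusion back to $A$ and $B$. The point is that the hypothesis gives a \emph{quantitative} lower bound that depends only on the measures of the approximants. Because the action is pmp, replacing $A$ by an approximant $A'$ changes each intersection $\gamma A \cap A$ and $\gamma A\cap B$ by a controlled amount, uniformly in $\gamma$.

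\textbf{Approximation.} Since $\mathcal{A}$ is an algebra generating the Borel $\sigma$-algebra, for every $\delta>0$ there are $A',B' \in \mathcal{A}$ with $\mu(A\symdif A')<\delta$ and $\mu(B\symdif B')<\delta$; this is the standard approximation lemma. Then $\mu(A')\approx_\delta \mu(A)$ and $\mu(B')\approx_\delta\mu(B)$. For $\delta$ small these are bounded away from $0$.

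\textbf{Transfer.} Let $\gamma\in\Gamma$ be the element given by the hypothesis for $A',B'$. Since $\gamma$ preserves $\mu$, we have $\mu(\gamma A\symdif\gamma A')=\mu(A\symdif A')<\delta$. Hence
\[
\mu(\gamma A\cap A)\ge \mu(\gamma A'\cap A')-2\delta \ge f(\mu(A'),\mu(A'))-2\delta,
\]
and similarly
\[
\mu(\gamma A\cap B)\ge \mu(\gamma A'\cap B') - 2\delta \ge f(\mu(A'),\mu(B'))-2\delta.
\]

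\textbf{Choice of $\delta$.} By continuity and positivity of $f$ at the points $(\mu(A),\mu(A))$ and $(\mu(A),\mu(B))$, there is $c>0$ and a neighborhood on which $f\ge c$. Choose $\delta<c/2$ small enough that $(\mu(A'),\mu(A'))$ and $(\mu(A'),\mu(B'))$ lie in that neighborhood. Both intersections are then positive, so $(A,\gamma A)$ is a $1$-chain from $A$ to $B$, and $\alpha$ is $1$-chaining.

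\textbf{Main subtlety.} The only real issue is the order of choices. The bound $c$ must be fixed from $\mu(A)$ and $\mu(B)$ before choosing $A'$ and $B'$, which is why continuity of $f$ is needed rather than mere positivity. Measure preservation is what makes the error estimate uniform in $\gamma$. Symmetry of $f$ is not actually needed for this argument.
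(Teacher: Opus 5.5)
Your proof is correct and follows essentially the same route as the paper. Both approximate $A,B$ by sets in $\mathcal{A}$, use measure preservation to bound $\mu\bigl((\gamma A\cap A)\symdif(\gamma A'\cap A')\bigr)$ (and its analogue for $B$) by $2\delta$ uniformly in $\gamma$, and use continuity of $f$ at $(\mu(A),\mu(A))$ and $(\mu(A),\mu(B))$ to keep the lower bounds positive. Your order of choices, fixing $c$ from $\mu(A),\mu(B)$ before picking the approximants, is the right one; the paper's write-up defines $\e$ in terms of $\mu(\tilde A),\mu(\tilde B)$ before $\tilde A,\tilde B$ are chosen, which is evidently a slip.
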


\begin{proof}
Let $A$ and $B$ have positive measure.
Let
\[
0 < \e < \frac{1}{2} \min \set{f(\mu(\tilde{A}),\mu(\tilde{A})), f(\mu(\tilde{A}),\mu(\tilde{B}))}
\]
and let $0 < \delta < \e/2$ be small enough so that for any $a \approx_\delta \mu(A)$ and $b \approx_\delta \mu(B)$, we have $f(a,a) \approx_\e f(\mu(A), \mu(A))$ and $f(a,b) \approx_\e f(\mu(A), \mu(B))$.

Approximate $A$ and $B$ with sets $\tilde{A}$ and $\tilde{B}$ in $\mathcal{A}$ so that $\mu(A \symdif \tilde{A})$ and $\mu(B \symdif \tilde{B})$ are less than $\delta$.
Let $\gamma \in \Gamma$ be such that $\mu(\gamma \tilde{A} \cap \tilde{A}) \ge f(\mu(\tilde{A}),\mu(\tilde{A}))$ and $\mu(\gamma \tilde{A} \cap \tilde{B}) \ge f(\mu(\tilde{A}),\mu(\tilde{B}))$.
Then since 
\[
\mu((\gamma A \cap A) \symdif (\gamma \tilde{A} \cap \tilde{A}))
\le 
\mu(A \symdif \tilde{A}) + \mu(\gamma A \symdif \gamma \tilde{A})
=
2\mu(A \symdif \tilde{A})
<
\e,
\]
we have $\mu(\gamma A \cap A) \approx_\e \mu(\gamma \tilde{A} \cap \tilde{A}) \ge f(\mu(\tilde A, \tilde A)) \approx_\e f(\mu(A, A)) > 2 \e$, so $\mu(\gamma A \cap A) > 0$.
Analogously,
$\mu(\gamma A \cap B) > 0$, so $\alpha$ is 1-chaining.
\end{proof}

For example, if $\alpha$ is a Bernoulli shift then $f(x,y) = xy$ satisfies the hypothesis of \cref{1-chaining_on_algebra}.
This yields the fact that any Bernoulli shift is 1-chaining, which was already immediate from the fact that Bernoulli shifts are strongly mixing.

\section{Chaining}\label{sec:chaining_vs_others}

This section is devoted to chaining and its relationship with weak mixing (WM), total ergodicity (TE), and ergodicity (E).
It is clear that for mcp group actions, chaining implies ergodicity, and we characterize chaining as ergodicity plus another condition (\cref{prop:characterization_of_chaining}).
We also prove that chaining is implied by weak mixing (\cref{cor:weak_mixing_implies_chaining}), as well as by total ergodicity under some nontriviality assumptions (\cref{prop:TE_implies_chaining}).

\begin{equation*}\label{Chaining_diagram}
\begin{matrix}
\text{WM} \\
& \mySearrow \\
& & \text{C} \implies \text{E} \\
& \myNearrow \\
\text{TE}
\end{matrix}
\end{equation*}

Even for free pmp actions, weak mixing does not imply total ergodicity: an example of a weak mixing free pmp action of $\F_2$ was given in \cite[Example 4.1]{TD15}. 
In \cref{extension_of_TD}, we give a general class of groups which admit weakly mixing but not totally ergodic free pmp actions. 
That total ergodicity does not imply weak mixing is witnessed by any irrational rotation.
It then follows that chaining implies neither weak mixing nor total ergodicity, and there are easy examples (see \cref{example:ergodic_not_chaining}) of free pmp actions which are ergodic but not chaining.
However, we show that for mcp $\Z$ actions, chaining and total ergodicity are equivalent (\cref{prop:C_implies_TE_for_Z}).

\subsection{Characterization of chaining}

\begin{example}[Ergodicity does not imply chaining]\label{example:ergodic_not_chaining}
Let $\Z/2\Z$ act on $X = \set{0,1}$ by swapping $0$ and $1$, where $X$ has the uniform probability measure.
This is pmp and ergodic but not chaining.

For an example of a free pmp $\Z$ action, let $T: X \to X$ be any free ergodic transformation (for example, an irrational rotation) and let $\~T: X \times \set{0,1} \to X \times \set{0,1}$ be the suspension of $T$, \ie $\~T(x,0) \defeq (x, 1)$ and $\~T(x,1) \defeq (Tx, 0)$. 
This is pmp and ergodic but not chaining.
\end{example}

Thus chaining is stronger than ergodicity and the following definition identifies this additional property.

\begin{defn}\label{def:group_generating}
For a group action $\Gamma \actson (X, \mu)$ on a probability space, we say that a measurable subset $A \subseteq X$ is \dfn{group generating} if the set 
\[
\Gamma_A \defeq \set{\gamma \in \Gamma: \gamma A \cap A >_\mu 0}
\]
of \dfn{return times} of $A$ generates $\Gamma$.
\end{defn}

\begin{lemma}\label{prop:Y_not_conull}
Let $\Gamma\actson (X,\mu)$ be an mcp measurable action.
Let $A$ be a measurable subset of $X$, let $\Delta$ be the subgroup of $\Gamma$ generated by the set of return times $\Gamma_A$, and let $Y \defeq \Delta A$.
For all $\gamma_1, \gamma_2 \in \Gamma$, if $\gamma_1 Y \cap \gamma_2 Y$ has positive measure, then $\gamma_1 \in \gamma_2 \Delta$. 

In particular, $\gamma Y \cap Y$ is null for all $\gamma \in \Gamma \setminus \Delta$.
\end{lemma}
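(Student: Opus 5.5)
The plan is to reduce the statement to the case $\gamma_2 = 1_\Gamma$ and then unpack the definition of $Y$ as a countable union of translates of $A$. Suppose $\gamma_1 Y \cap \gamma_2 Y >_\mu 0$. Since the action is mcp, applying $\gamma_2^{-1}$ preserves positivity of measure, so $\gamma Y \cap Y >_\mu 0$ where $\gamma \defeq \gamma_2^{-1}\gamma_1$. It therefore suffices to show that $\gamma Y \cap Y >_\mu 0$ implies $\gamma \in \Delta$; then $\gamma_1 = \gamma_2 \gamma \in \gamma_2\Delta$. The ``in particular'' clause is exactly the contrapositive of this reduced statement.

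For the reduced statement, I will write $Y = \bigcup_{\delta \in \Delta} \delta A$, which is a countable union because $\Gamma$ is countable. Then $\gamma Y = \bigcup_{\delta \in \Delta} \gamma\delta A$, so
\[
\gamma Y \cap Y = \bigcup_{\delta, \delta' \in \Delta} (\gamma\delta A \cap \delta' A).
\]
By countable subadditivity, if this set has positive measure, then some term $\gamma\delta A \cap \delta' A$ has positive measure. Applying $\delta'^{-1}$ and using the mcp assumption once more, $\delta'^{-1}\gamma\delta A \cap A >_\mu 0$. This means $\delta'^{-1}\gamma\delta \in \Gamma_A \subseteq \Delta$, and hence $\gamma \in \delta' \Delta \delta^{-1} = \Delta$.

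Two points need care. The first is the repeated use of nonsingularity to move positive-measure intersections by group elements. The second is measurability: $Y$ is measurable as a countable union of translates of the measurable set $A$, since translates of measurable sets are measurable under an mcp action. I expect neither to be a real obstacle. The only thing to watch is that every step uses only countable unions, so that subadditivity applies.
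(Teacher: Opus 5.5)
Your proposal is correct and is essentially the paper's argument. The only difference is ordering: you translate by $\gamma_2^{-1}$ first and then decompose $Y$ into translates of $A$, whereas the paper decomposes first and translates by $(\gamma_2\delta_2)^{-1}$ in one step, arriving directly at $\delta_2^{-1}\gamma_2^{-1}\gamma_1\delta_1 \in \Gamma_A$.
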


\begin{proof}
If $\gamma_1 Y \cap \gamma_2 Y$ has positive measure, then for some $\delta_1, \delta_2 \in \Delta$, $\gamma_1 \delta_1 A \cap \gamma_2 \delta_2 A >_\mu 0$, so 
$\delta_2^{-1} \gamma_2^{-1} \gamma_1 \delta_1 A \cap A >_\mu 0$,
hence $\delta_2^{-1}\gamma_2^{-1}\gamma_1\delta_1 \in \Gamma_A$.
Therefore $\gamma_2^{-1}\gamma_1 \in \Delta$, so $\gamma_1 \in \gamma_2\Delta$.
\end{proof}

\begin{prop}[Characterization of chaining]\label{prop:characterization_of_chaining}
An mcp action $\Gamma\actson^\alpha (X,\mu)$ of a countable group $\Gamma$ is chaining if and only if it is ergodic and every positive measure set is group generating.
\end{prop}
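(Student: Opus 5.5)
We prove the two directions separately. The forward direction is mostly bookkeeping, and the reverse direction rests on building a single invariant set out of $A$.

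\emph{Chaining implies ergodicity and group generation.} Suppose the action is chaining and let $A$ have positive measure. For ergodicity, take a $\Gamma$-invariant measurable $B$ with $\mu(B)>0$, and suppose for contradiction that $\mu(X\setminus B)>0$. Apply chaining to the pair $(B, X\setminus B)$ to get a chain $(\gamma_0B,\dots,\gamma_kB)$. Each $\gamma_i B$ equals $B$ up to a null set, by invariance and since the action is mcp. So the last link $\gamma_kB\cap(X\setminus B)$ is null, which is a contradiction.

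For group generation, let $\Delta=\langle\Gamma_A\rangle$ and $Y=\Delta A$. By \cref{prop:Y_not_conull}, $\gamma Y\cap Y$ is null for every $\gamma\notin\Delta$. Suppose $\Delta\ne\Gamma$ and pick $\gamma\notin\Delta$; set $B\defeq\gamma A$, which has positive measure since the action is mcp. By chaining there is a chain $1=\gamma_0,\gamma_1,\dots,\gamma_k$ with $\gamma_iA\cap\gamma_{i+1}A>_\mu0$ and $\gamma_kA\cap\gamma A>_\mu 0$. Each link $\gamma_iA\cap\gamma_{i+1}A>_\mu0$ gives $\gamma_i^{-1}\gamma_{i+1}\in\Gamma_A$, so by induction every $\gamma_i$ lies in $\Delta$. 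The last link gives $\gamma_k^{-1}\gamma\in\Gamma_A\subseteq\Delta$, hence $\gamma\in\Delta$, a contradiction. Thus $A$ is group generating.

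\emph{Ergodicity and group generation imply chaining.} Let $A,B$ have positive measure and set
\[
C\defeq\bigcup\set{\gamma A:\ \gamma\in\Gamma,\ A \text{ chains to } \gamma A \text{ via some finite chain ending at } \gamma A},
\]
that is, the union of all $\gamma_kA$ over all finite chains from $A$. The index set is countable, so $C$ is measurable, and $C\supseteq A$ has positive measure. It suffices to show that $C$ is $\Gamma$-invariant mod null. Then ergodicity makes $C$ conull, so $C\cap B>_\mu0$, and hence some $\gamma_kA$ in the union meets $B$ in positive measure, which is exactly a chain from $A$ to $B$.

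For invariance, let $G\subseteq\Gamma$ be the set of endpoints $\gamma_k$ of chains from $A$. Appending one more link shows $G\Gamma_A\subseteq G$: if $\gamma\in G$ and $\delta\in\Gamma_A$, then $\gamma A\cap\gamma\delta A=\gamma(A\cap\delta A)>_\mu0$ by mcp. Since $\Gamma_A$ is symmetric ($\delta A\cap A>_\mu0$ implies $A\cap\delta^{-1}A>_\mu0$ by mcp) and $1\in G$, the set $G$ contains every product of elements of $\Gamma_A$. Group generation then gives $G=\Gamma$, so $C=\Gamma A$, which is genuinely invariant.

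I expect the main subtlety to be exactly these mcp and symmetry details: that $\Gamma_A$ is closed under inverses, so the semigroup it generates is the group, and that translating a positive-measure intersection keeps it positive. Both follow directly from nonsingularity.
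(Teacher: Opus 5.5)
Your proof is correct and follows the paper's approach. In the forward direction you show that every chain from $A$ stays inside $\langle\Gamma_A\rangle$; you aim at $\gamma A$ with $\gamma\notin\Delta$ instead of the paper's $X\setminus\Delta A$, so your citation of \cref{prop:Y_not_conull} is actually unused. In the reverse direction you show the set of reachable translates is closed under multiplication by return times, hence equals $\Gamma$, and finish with ergodicity; your explicit use of the symmetry of $\Gamma_A$ to pass from the generated semigroup to the generated group fills in a step the paper leaves implicit.
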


\begin{proof}
$\implies$: 
Suppose $\alpha$ is not ergodic, so let $A$ be a measurable invariant set with $0 < \mu(A) < 1$. 
Then for all $\gamma \in \Gamma$, $\gamma A \cap A^c = \emptyset$, so $\alpha$ is not chaining.

Now suppose that $A$ is a positive measure set that is not group generating.
Let $\Delta$ be the subgroup of $\Gamma$ generated by $\Gamma_A \defeq \set{\gamma \in \Gamma: \gamma A \cap A >_\mu 0}$, and let $B \defeq X \setminus \Delta A$.
Then by \cref{prop:Y_not_conull}, $B >_\mu 0 $ and $A$ does not $k$-chain to $B$ for any $k \in \N$.
So $\alpha$ is not chaining.

$\impliedby$:
Let $A$ be a positive measure set. 
Since $\alpha$ is ergodic, it's enough to show that for all $\gamma \in \Gamma$, for some $k \in \N$, $A$ $k$-chains to $\gamma A$.
Let $\Delta \defeq \set{\gamma \in \Gamma: \text{ for some } k \in \N, \; A \text{ $k$-chains to } \gamma A}$.
Since $\Delta$ is a generating set for $\Gamma$, we just need to verify that $\Delta$ is a subgroup.

Let $\delta_1, \delta_2 \in \Delta$.
Let $\gamma_1, \dots, \gamma_k$ witness that $A$ $k$-chains to $\delta_1 A$, and let $\gamma_{k+1}, \dots , \gamma_{k+j}$ witness that $A$ $j$-chains to $\delta_2 A$.
Then $\gamma_1, \dots, \gamma_k, \delta_1, \delta_1 \gamma_{k+1}, \dots , \delta_1 \gamma_{k+j}$ witness that $A$ $(k+j)$-chains to $\delta_1\delta_2 A$.
Therefore, $\delta_1\delta_2 \in \Delta$.
\end{proof}

\subsection{Weak mixing implies chaining}

We now show that if an mcp action is weakly mixing, then every positive measure set is group generating (and hence the action is chaining).
Matt Bowen kindly provided the following statement in the pmp context, and we will then generalize this result to the mcp setting (\cref{prop:generating_section}).

\begin{prop}\label{prop:generating_section_pmp}
Let $\Gamma\actson (X,\mu)$ be a pmp weakly mixing action.
Then every positive measure set $A \subseteq X$ is group generating.
\end{prop}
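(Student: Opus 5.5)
Let $A$ have positive measure, let $\Delta$ be the subgroup generated by $\Gamma_A$, and suppose toward a contradiction that $\Delta \ne \Gamma$. The plan is to use the ergodic pmp action $\Gamma \actson \Gamma/\Delta$ when $\Delta$ has finite index. When $\Delta$ has infinite index, we use instead a suitable probability-measure-preserving replacement, and then produce a nontrivial invariant set in the diagonal product, contradicting weak mixing. The key combinatorial input is \cref{prop:Y_not_conull}: with $Y \defeq \Delta A$, the translates $\gamma Y$ for distinct cosets $\gamma\Delta$ are pairwise almost disjoint.

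\emph{Finite index case.} Suppose $[\Gamma:\Delta] = n$ with $2 \le n < \infty$. Equip $\Gamma/\Delta$ with uniform measure; the left translation action is a transitive pmp action, hence ergodic. Define $\phi: X \to \Gamma/\Delta$ on $\Gamma Y$ by $\phi(\gamma y) \defeq \gamma\Delta$ for $y \in Y$. This is well defined a.e. by \cref{prop:Y_not_conull}, since the translates $\gamma Y$ over coset representatives are almost disjoint, and it is $\Gamma$-equivariant. By ergodicity of $X$ (weak mixing implies ergodic), $\Gamma Y$ is conull. The graph $\set{(x, \phi(x))}$ is then an invariant subset of $X \times \Gamma/\Delta$ of measure exactly $1/n \in (0,1)$. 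This contradicts ergodicity of the diagonal action. Equivalently, a weakly mixing action admits no nontrivial finite equivariant factor.

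\emph{Infinite index case.} This is the main obstacle, since $\Gamma/\Delta$ carries no invariant probability measure. Here the pmp hypothesis on $X$ must be used. Because $\mu$ is invariant and the sets $\gamma Y$ for $\gamma\Delta \in \Gamma/\Delta$ are pairwise almost disjoint with equal measure $\mu(Y) \ge \mu(A) > 0$, there can be at most $1/\mu(Y)$ distinct cosets. Hence $[\Gamma:\Delta] \le 1/\mu(A) < \infty$, and the infinite index case cannot occur at all.

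So the proof reduces to two steps. First, a counting argument using invariance of $\mu$ together with \cref{prop:Y_not_conull} shows the index is finite. Second, the finite-index argument gives a contradiction with weak mixing, tested against the ergodic pmp action on $\Gamma/\Delta$. For the eventual mcp generalization in \cref{prop:generating_section}, the first step is the one that breaks, and I would expect to need a different argument there to rule out infinite index.
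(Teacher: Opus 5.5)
Your proposal is correct and follows essentially the same route as the paper. In both, invariance of $\mu$ together with \cref{prop:Y_not_conull} forces $[\Gamma:\Delta]<\infty$, and then the invariant set $\bigcup_{\gamma\in\Gamma}(\gamma\Delta A\times\set{\gamma\Delta})$ (your graph of $\phi$) in $X\times\Gamma/\Delta$ with the uniform measure contradicts ergodicity of the diagonal action; your explicit computation that this set has measure $1/[\Gamma:\Delta]\in(0,1)$ simply fills in a detail the paper leaves implicit.
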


\begin{proof}
Let $A$ be a positive measure set, and let $\Gamma_A$ be the set of return times of $A$. 
By \cref{prop:Y_not_conull}, the cosets of $\Delta$ produce pairwise disjoint (up to measure zero) translates of $A$. 
Since $\alpha$ is pmp and $\mu(A)>0$, $\Delta$ is finite index.

Observe that the coset action $\Gamma\actson \Gamma/\Delta$ is ergodic with respect to any probability measure on the finite set $ \Gamma/\Delta$.
Because $\alpha$ is weakly mixing, the diagonal action $\Gamma\actson X \times \Gamma/\Delta$ is ergodic, where $\Gamma/\Delta$ is equipped with the uniform probability measure.
Finally, since $\bigcup_{\gamma \in \Gamma} (\gamma\Delta A \times \set{\gamma\Delta})$ is a measurable invariant set, we conclude that $\Delta = \Gamma$.
\end{proof}

While the above proof fails in the non-pmp setting, we recover this result for mcp actions using an argument of the same spirit, via the equivalence of weak mixing with unitary ergodicity.

\begin{prop}\label{prop:generating_section}
Let $\Gamma\actson (X,\mu)$ be an mcp weakly mixing action.
Then every positive measure set $A \subseteq X$ is group generating.
\end{prop}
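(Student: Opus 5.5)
We argue by contraposition, using the characterization WM $\iff$ UE of Glasner and Weiss in place of the pmp argument of \cref{prop:generating_section_pmp}. Fix a positive measure set $A$, let $\Delta$ be the subgroup generated by $\Gamma_A$, and put $Y \defeq \Delta A$. Assume $\Delta \ne \Gamma$. We will build a nonconstant $\Gamma$-equivariant measurable map from $X$ into a unitary representation; this contradicts unitary ergodicity.

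The natural unitary representation is the quasi-regular representation $\Gamma \actson \ell^2(\Gamma/\Delta)$, given by $(\gamma f)(c) = f(\gamma^{-1}c)$. It is unitary on a separable Hilbert space because $\Gamma$ is countable. Note that we do not need $\Delta$ to have finite index, which was the pmp-specific step in \cref{prop:generating_section_pmp}.

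To define the map, consider the translates $\gamma Y$ for $\gamma \in \Gamma$. Each depends only on the coset $\gamma\Delta$, since $Y$ is $\Delta$-invariant. By \cref{prop:Y_not_conull}, translates coming from distinct cosets intersect in null sets. Because the action is mcp and $\Gamma$ is countable, we may discard a $\Gamma$-invariant null set so that these translates are genuinely pairwise disjoint. Their union $Z \defeq \Gamma Y$ is $\Gamma$-invariant and has positive measure, since it contains $A$.

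Weak mixing implies ergodicity, so $Z$ is conull. Define $\phi : X \to \ell^2(\Gamma/\Delta)$ by $\phi(x) \defeq \mathbb{1}_{\{c\}}$, where $c$ is the unique coset with $x \in cY$. On the complementary null set, set $\phi \defeq 0$.

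Next we check that $\phi$ is measurable and equivariant. Measurability holds because $\phi$ takes countably many values, each on a measurable set. For equivariance, if $x \in cY$ then $\gamma x \in \gamma c Y$, so $\phi(\gamma x) = \mathbb{1}_{\{\gamma c\}} = \gamma \cdot \mathbb{1}_{\{c\}} = \gamma\cdot\phi(x)$.

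Finally, $\phi$ is not essentially constant. Its level set $\{\phi = \mathbb{1}_{\{\Delta\}}\}$ is $Y$, and the level set for a coset $\gamma\Delta \neq \Delta$ is $\gamma Y$. Both have positive measure, using mcp for the latter. This contradicts UE, hence WM, and so $\Delta = \Gamma$.

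The only delicate point is making the partition into translates genuinely disjoint and invariant rather than merely disjoint modulo null sets. The plan is to remove the countable union of the null sets $\gamma_1 Y \cap \gamma_2 Y$ over $\gamma_1\Delta \ne \gamma_2\Delta$, together with its $\Gamma$-saturation, which is still null because the action is mcp.
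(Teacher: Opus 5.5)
Your proof is correct and follows essentially the same route as the paper: you contrapose, invoke the Glasner--Weiss equivalence of weak mixing with unitary ergodicity, and map each $x$ to the indicator of the unique coset $c$ with $x \in cY$ in the quasi-regular representation on $\ell^2(\Gamma/\Delta)$. Your explicit removal of a $\Gamma$-invariant null set and your use of ergodicity to make $\Gamma Y$ conull are more careful versions of points the paper leaves implicit. Removing the null set guarantees each $\phi(x)$ is a single indicator, hence in $\ell^2$. The paper instead sends points outside $\Gamma Y$ to $0$ and deduces non-constancy from $Y$ not being conull.
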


\begin{proof}
Suppose $A \subseteq X$ is a positive measure set that is not group generating.
Let $\Gamma_A$ be the set of return times of $A$, so $\Delta \defeq \langle \Gamma_A \rangle \neq \Gamma$.
Let $Y \defeq \Delta A$, so $Y$ is not conull by \cref{prop:Y_not_conull}.

By \cite[Theorem 1.2]{Glasner-Weiss:weak_mixing}, it suffices to show that $\alpha$ is not unitarily ergodic.
Let $\Gamma \actson \ell_2(\Gamma / \Delta)$ by shift, \ie $(\gamma\cdot x)(\delta \Delta) \defeq x(\gamma^{-1}\delta \Delta)$, so the action is by unitary operators.
Consider the map $\varphi: X \to \ell_2(\Gamma/\Delta)$ defined by
\[
(\varphi x)(\delta \Delta) = 
\begin{cases}
1 \text{ if } x \in \delta Y
\\
0 \text{ otherwise}
\end{cases}
\]
Then $\varphi$ is measurable since $\alpha$ is mcp, $\varphi$ is equivariant by construction, and $\varphi$ is not essentially constant since $Y$ is not conull.
\end{proof}

\cref{prop:generating_section,prop:characterization_of_chaining} immediately yield the following.

\begin{cor}\label{cor:weak_mixing_implies_chaining}
Let $\Gamma\actson (X,\mu)$ be an mcp measurable action.
If the action is weakly mixing, then it is chaining.
\end{cor}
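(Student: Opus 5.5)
The corollary follows by combining \cref{prop:generating_section} with the characterization of chaining in \cref{prop:characterization_of_chaining}. By that characterization, an mcp action is chaining exactly when it is ergodic and every positive measure set is group generating. So I will check these two conditions separately for a weakly mixing mcp action $\Gamma \actson (X,\mu)$.

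\emph{Ergodicity.} Take the one-point space $Y = \set{\ast}$ with the trivial $\Gamma$-action. It is an ergodic pmp action, so weak mixing makes the diagonal action on $X \times Y$ ergodic. This diagonal action is canonically isomorphic to $\Gamma \actson (X,\mu)$, so the action itself is ergodic. (Alternatively, one can cite the implication WM $\imp$ E from the Glasner--Weiss diagram in \cref{subsec:erg_wm_beyond}.)

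\emph{Group generation.} Because the action is mcp and weakly mixing, \cref{prop:generating_section} applies directly: every positive measure set $A \subseteq X$ is group generating, meaning $\langle \Gamma_A \rangle = \Gamma$.

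With both conditions in hand, the reverse direction of \cref{prop:characterization_of_chaining} shows the action is chaining. No step is a real obstacle. The substantive input, the unitary-ergodicity argument producing the equivariant map into $\ell_2(\Gamma/\Delta)$, is already contained in \cref{prop:generating_section}. The only point needing care is that the mcp hypothesis is met, which it is, and it is required by both cited results.
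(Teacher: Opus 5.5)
Your proposal is correct and matches the paper's argument, which simply combines \cref{prop:generating_section} with the reverse direction of \cref{prop:characterization_of_chaining}. Your explicit check of ergodicity via the trivial action on a one-point space spells out the implication WM $\imp$ E, which the paper leaves implicit.
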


\subsection{Total ergodicity in relation to chaining and weak mixing}

\begin{prop}\label{prop:TE_implies_chaining}
Let $\Gamma \actson (X,\mu)$ be an mcp action of a countably infinite group $\Gamma$ on a standard atomless probability space $(X,\mu)$.
If $\alpha$ is totally ergodic, then $\alpha$ is chaining.
\end{prop}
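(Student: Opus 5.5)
By \cref{prop:characterization_of_chaining}, it suffices to show that $\alpha$ is ergodic and that every positive measure set is group generating. Ergodicity is immediate: $\Gamma$ is itself an infinite subgroup of $\Gamma$, so total ergodicity applies to it. So I fix a positive measure set $A$ and suppose, towards a contradiction, that $\Delta \defeq \langle \Gamma_A \rangle \ne \Gamma$. Put $Y \defeq \Delta A$. By \cref{prop:Y_not_conull}, $\gamma Y \cap Y$ is null for every $\gamma \in \Gamma \setminus \Delta$. Choosing such a $\gamma$, the set $\gamma Y$ has positive measure because the action is mcp. Hence $Y$ is not conull. I then split into two cases according to whether $\Delta$ is infinite or finite.

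\emph{Case 1: $\Delta$ is infinite.} By total ergodicity, $\Delta$ acts ergodically. The set $Y = \Delta A$ is $\Delta$-invariant and contains $A$, so it has positive measure. It is also not conull, which contradicts the ergodicity of $\Delta$.

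\emph{Case 2: $\Delta$ is finite.} This is where I expect the real work, and where atomlessness is used. The key observation is that for any measurable $B \subseteq A$, the $\Gamma$-invariant set $\Gamma B$ meets $A$ only inside $\bigcup_{\delta \in \Delta} \delta B$, up to a null set. Indeed, if $\gamma \notin \Gamma_A$, then $\gamma B \cap A \subseteq \gamma A \cap A$ is null, and there are only countably many such $\gamma$. Therefore
\[
\mu(\Gamma B \cap A) \le \sum_{\delta \in \Delta} \mu(\delta B).
\]
Since $\Delta$ is finite and the action is mcp, each pushforward $\delta^{-1}_*\mu$ is absolutely continuous with respect to $\mu$. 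So there is an $\eta > 0$ such that $\mu(B) < \eta$ implies $\mu(\delta B) < \mu(A)/(2|\Delta|)$ for every $\delta \in \Delta$; this is the usual $\e$--$\delta$ form of absolute continuity, applied to finitely many measures.

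Because $\mu$ is atomless, I can choose $B \subseteq A$ with $0 < \mu(B) < \eta$. Then $\Gamma B$ is invariant and has positive measure. On the other hand, $\mu(\Gamma B \cap A) \le \mu(A)/2 < \mu(A)$, so $A \setminus \Gamma B$ has positive measure and $\Gamma B$ is not conull. This contradicts the ergodicity of $\Gamma$.

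Both cases give contradictions, so every positive measure set is group generating, and \cref{prop:characterization_of_chaining} then yields that $\alpha$ is chaining. The main obstacle is the finite-$\Delta$ case. There total ergodicity gives nothing directly, and one cannot assume that $\Gamma$ has a proper infinite subgroup (for example, when $\Gamma$ is a Prüfer group). The shrinking argument above avoids this by using only the ergodicity of $\Gamma$ together with atomlessness.
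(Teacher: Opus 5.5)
Your proof is correct. Its overall structure matches the paper's: both use \cref{prop:characterization_of_chaining} to reduce to showing that $\Delta = \langle \Gamma_A \rangle$ is infinite, and then apply total ergodicity to $\Delta$. You finish the infinite case by contradicting that $\Delta A$ is not conull; the paper instead shows directly that every $\gamma$ lies in $\Delta$ once $\Delta A$ is conull, which amounts to the same thing.

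The genuine difference is how the finite case is ruled out. The paper takes a Borel transversal $Z$ for the finite-group action $\Delta \actson \Delta A$, which necessarily has positive measure. It splits $Z$ into two positive-measure pieces $Z_0, Z_1$ using atomlessness, and uses ergodicity of $\Gamma$ to find $\gamma$ with $\gamma Z_0 \cap Z_1$ non-null. Then $\gamma(\Delta A) \cap \Delta A$ is non-null, so $\gamma \in \Delta$ by \cref{prop:Y_not_conull}, which contradicts transversality.

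You argue quantitatively instead. Uniform absolute continuity of the finitely many translates lets you choose a tiny $B \subseteq A$ whose invariant saturation $\Gamma B$ covers at most half of $A$.

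The paper's route is shorter. However, it relies on the existence of Borel transversals for finite group actions and on $\Delta$ being a finite group. Yours is purely measure-theoretic. Moreover, your key inclusion $\Gamma B \cap A \subseteq_\mu \bigcup_{\gamma \in \Gamma_A} \gamma B$ involves only $\Gamma_A$. So the same argument proves something stronger: in any ergodic mcp action on an atomless space, the set of return times $\Gamma_A$ of every positive-measure set $A$ is itself infinite, not merely the group it generates.
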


\begin{proof}
Suppose $\Gamma\actson^\alpha (X,\mu)$ is totally ergodic.
Because $\Gamma$ is infinite, the action $\alpha$ is ergodic.
Thus, by \cref{prop:characterization_of_chaining} it suffices to fix a positive measure set $A \subseteq X$ and show that the subgroup $\Delta \le \Gamma$ generated by the return times to $A$ is equal to $\Gamma$.

\begin{claim*}
$\Delta$ is infinite.
\end{claim*}
\begin{pf}
Suppose towards a contradiction that $\Delta$ is finite.
Then the action of $\Delta$ on $Y \defeq \Delta A$ admits a Borel transversal $Z \subseteq Y$, which necessarily has positive measure.
Since $\mu$ is atomless, we can partition $Z = Z_0 \sqcup Z_1$ into disjoint Borel subsets of positive measure.
By the ergodicity of the action of $\Gamma$, there is $\gamma \in \Gamma$ such that $\gamma Z_0 \cap Z_1$ has positive measure.
In particular, $\mu(\gamma Y \cap Y) > 0$, so $\gamma \in \Delta$ by \cref{prop:Y_not_conull}, which contradicts $Z$ being a transversal for $\Delta \actson Y$.
\end{pf}

By this claim and total ergodicity, $\Delta$ acts ergodically, so $\Delta A$ is conull.
Thus, for each $\gamma \in \Gamma$, there is $\delta \in \Delta$ such that $\delta A \cap \gamma A$ has positive measure.
Therefore, $A \cap \delta^{-1}\gamma A$ has positive measure, so $\delta^{-1} \gamma \in \Delta$, hence $\gamma \in \Delta$.
Thus, $\Delta = \Gamma$.
\end{proof}

In general, chaining does not imply total ergodicity.
As mentioned above, even weak mixing does not imply total ergodicity in general.
On the other hand, it is well known that for $\Gamma = \Z$, weak mixing \textit{does} imply total ergodicity.
We show below that actually, even chaining implies total ergodicity for all mcp $\Z$ actions.

\begin{prop}\label{prop:C_implies_TE_for_Z}
For mcp actions of $\Z$, chaining is equivalent to total ergodicity.
\end{prop}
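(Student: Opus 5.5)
One direction is essentially \cref{prop:TE_implies_chaining}, but that result assumes $X$ is atomless and $\Gamma$ infinite. I will therefore first dispose of the atomic case and then argue the converse directly. Here $\Gamma = \Z$, whose infinite subgroups are exactly $n\Z$ for $n \ge 1$. So total ergodicity means that $T^n$ is ergodic for every $n \ge 1$, where $T$ denotes the generator.

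\emph{TE $\imp$ C.} If $\mu$ is atomless, this is exactly \cref{prop:TE_implies_chaining}. Suppose instead $\mu$ has an atom $x$. Ergodicity of the mcp action forces $\mu$ to be concentrated on the orbit of $x$, and quasi-invariance makes every point of that orbit an atom. Since $\mu$ is a probability measure and the action is mcp, the orbit is countable. If it were infinite, $\Z$ would act freely and transitively on it. Then the subgroup $2\Z$ has two invariant classes (even and odd orbit points), both of positive measure, contradicting total ergodicity. Hence the orbit is finite, say of size $m$.
\begin{itemize}
\item If $m \ge 2$, then $T^m$ fixes every point, so $m\Z$ does not act ergodically, again a contradiction.
\item If $m = 1$, then $\mu$ is a Dirac measure, and the action is trivially chaining (indeed $0$-chaining).
\end{itemize}
This edge case is the only point needing care, and it is routine.

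\emph{C $\imp$ TE.} Assume the action is chaining. By \cref{prop:characterization_of_chaining}, it is ergodic and every positive measure set is group generating. Fix $n \ge 2$ and suppose for contradiction that $T^n$ is not ergodic. Pick a $T^n$-invariant set $E$ with $0 < \mu(E) < 1$. The sets $T^iE$ for $0 \le i < n$ are all $T^n$-invariant, and their union is $T$-invariant, hence conull by ergodicity.

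Now fix a $T^n$-invariant $E'$ of positive measure which is minimal in the following sense: among the finitely many distinct translates $T^iE'$, any two are either equal or disjoint mod null sets. To produce such an $E'$, let $d$ be the least positive integer with $T^dE =_\mu E$; note $d$ divides $n$ and is well defined since $T^nE = E$. Then replace $E$ by an atom of the finite Boolean algebra generated by $E, TE, \dots, T^{d-1}E$. This atom is still $T^n$-invariant, and its translates form a partition into cyclically permuted pieces. Call the atom $E'$ and let $d'$ be its period, so that $T^{d'}E' = E'$.

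If $d' = 1$, then $E'$ is $T$-invariant, so by ergodicity $E'$ is conull. In that case the whole algebra generated by the translates is trivial, and I argue that this case contradicts $0<\mu(E)<1$. Indeed, $E$ is a union of atoms of that algebra and is neither null nor conull, so some atom has a translate disjoint from it. Hence I may choose the atom $E'$ with $d' \ge 2$.

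In that case the translates $T^iE'$ for $0 \le i < d'$ are pairwise disjoint mod null sets. It follows that the return times satisfy $\Gamma_{E'} \subseteq d'\Z$. This subgroup is proper, so $E'$ is not group generating, contradicting \cref{prop:characterization_of_chaining}.

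The main obstacle is making this atom selection rigorous, that is, guaranteeing that some atom has period at least $2$. The plan is to argue that if every atom were $T$-invariant mod null, then each atom would be null or conull by ergodicity, forcing $E$ to be null or conull, which it is not.
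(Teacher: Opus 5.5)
Your proof is correct, but your argument for C $\Rightarrow$ TE takes a different route from the paper's.

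\textbf{The paper's route.} It works directly from the definition of chaining. Given a $T^m$-invariant $A$ with $0<\mu(A)<1$, it repeatedly uses chains from subsets of $A$ to subsets of $A^c$ to extract a ``straddling'' link. This produces integers $n_0,n_1,\dots$ such that $T^{n_k}A\cap\cdots\cap T^{n_k+\cdots+n_0}A$ meets both $A$ and $A^c$. It then observes that the tail sums $n_k+\cdots+n_i$ must be nonzero and pairwise distinct mod $m$. A pigeonhole on residues makes this impossible once $k \ge m$.

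\textbf{Your route.} You invoke \cref{prop:characterization_of_chaining} and refine $E$ using the finite algebra generated by its translates, which is $T$-invariant mod null. Since the action is mcp, $T$ permutes the nonnull atoms. No nonnull atom can be $T$-invariant, since it would then be conull and force $E$ to be null or conull. So any nonnull atom $E'$ lies in a cyclic tower $E', TE',\dots,T^{d'-1}E'$ of pairwise disjoint sets with $d'\ge 2$. Hence $\Gamma_{E'}=d'\Z$, and $E'$ is not group generating.

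\textbf{Comparison.} Your argument is more structural: it exhibits an equivariant factor onto $\Z/d'\Z$ as the obstruction and needs no iteration. The paper's argument avoids both the characterization and the atom bookkeeping, using only the raw chaining condition. Two small remarks on your write-up:
\begin{itemize}
\item The ``minimal $E'$'' framing in your middle paragraph is muddled; the rigorous justification is the one in your final paragraph.
\item You could skip the least period $d$ and simply use $E, TE,\dots,T^{n-1}E$.
\end{itemize}

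\textbf{The atomic case.} Your separate treatment of atomic $\mu$ in TE $\Rightarrow$ C is a genuine improvement. The paper just cites \cref{prop:TE_implies_chaining}, whose atomless hypothesis it does not address here. Your case analysis closes that gap:
\begin{itemize}
\item an infinite orbit contradicts ergodicity of $2\Z$;
\item a finite orbit of size $m\ge 2$ contradicts ergodicity of $m\Z$;
\item a Dirac measure is trivially $0$-chaining.
\end{itemize}
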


\begin{proof}
That total ergodicity implies chaining is the content of \cref{prop:TE_implies_chaining}.
Suppose towards a contradiction that an mcp action $\Z \actson (X,\mu)$ is chaining but not totally ergodic, and let $m \in \N$ be such that $T^m$ is not ergodic, where $T(x) \defeq 1 \cdot x$.
Let $A \subseteq X$ be a $T^m$-invariant set with $0 < \mu(A) < 1$.

We will use the following consequence of chaining: for each $B \subseteq A$ and $C \subseteq A^c$ of positive measure, there is a ``chain-link'' which straddles $A$ and $A^c$, \ie $A \cap T^n(B) > 0$ and $A^c \cap T^n(B) > 0$ for some nonzero $n \in \Z$.
Furthermore, $T^m A = A$ implies that $n \ne 0 \; (\mod m)$.

First, let $n_0 \ne 0 \; (\mod m)$ be such that $\mu(A \cap T^{n_0}A) > 0$ and $\mu(A^c \cap T^{n_0}A) > 0$.
Next, let $n_1 \ne 0 \; (\mod m)$ be such that $\mu(A \cap T^{n_1}(A \cap T^{n_0}A)) = \mu(A \cap T^{n_1} A \cap T^{n_1+n_0} A) > 0$ and $\mu(A^c \cap T^{n_1} A \cap T^{n_1+n_0} A) > 0$.
Note that $n_1 + n_0 \ne 0 \; (\mod m)$ since $T^m A = A$, so the numbers $0, n_1, n_1 + n_0$ are distinct $(\mod m)$.

Iterating this, we obtain a sequence $(n_k)_{k \in \N}$ of nonzero ($\mod m$) integers such that for all $k \in \N$ the set $T^{n_k} A \cap T^{n_k+n_{k-1}} A \cap \dots \cap T^{n_k + \dots + n_0} A$ meets both $A$ and $A^c$ in positive measure sets.
It is then straightforward to verify by induction on $k \in \N$ that the tail sums $n_k + n_{k-1} + \cdots + n_i \; (\mod m)$, where $0 \le i \le k$, are nonzero and pairwise distinct.
However, this is impossible for $k \geq m$.
\end{proof}

The last proof heavily relied on the modular arithmetic and 1-dimensionality of $\Z$.
Since total ergodicity is not an upward closed property for groups, while chaining is, it is not surprising that there are weak mixing actions of groups larger than $\Z$ which are not totally ergodic.
Indeed, the $\F_2$-action in \cite[Example 4.1]{TD15} is weakly mixing (and hence chaining by \cref{cor:weak_mixing_implies_chaining}) but not totally ergodic.
We extend this result to a large class of groups.

\begin{prop}\label{extension_of_TD}
Let $\Gamma$ be a countable group which is an extension of an infinite group by an infinite group, \ie there is a surjective homomorphism from $\Gamma$ onto an infinite group with an infinite kernel.
Then there exists a free pmp action of $\Gamma$ that is weakly mixing but not totally ergodic.
In particular, this is true for $\Gamma \defeq \Z^d$ and $\Gamma \defeq \F_d$ with $d \ge 2$.
\end{prop}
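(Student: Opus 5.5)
Let $\rho:\Gamma\to Q$ be the surjective homomorphism onto an infinite group $Q$, and let $N\defeq\ker\rho$, which is infinite by assumption. The plan is to build an action that is weakly mixing but in which the infinite subgroup $N$ fails to act ergodically. Take a free, weakly mixing pmp action $Q\actson (Z,\zeta)$; the Bernoulli shift $Q\actson([0,1]^Q,\lambda^Q)$ works, since $Q$ is infinite and Bernoulli shifts of infinite groups are mixing, hence weakly mixing. Pull it back via $\rho$ to an action $\Gamma\actson Z$. Let $\Gamma\actson (W,\omega)$ be the Bernoulli shift $([0,1]^\Gamma,\lambda^\Gamma)$, which is free and mixing. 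Consider the diagonal action $\Gamma\actson (X,\mu)\defeq(Z\times W,\zeta\times\omega)$. This action is pmp, and it is free because its second coordinate is already free.

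\emph{Not totally ergodic.} The subgroup $N$ acts trivially on $Z$. So for any Borel $C\subseteq Z$ with $0<\zeta(C)<1$, the set $C\times W$ is $N$-invariant and has measure strictly between $0$ and $1$. Since $N$ is infinite, the action is not totally ergodic.

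\emph{Weakly mixing.} For pmp actions, weak mixing is equivalent to the absence of nonconstant finite-dimensional invariant subspaces (equivalently, to ergodicity of the diagonal action on $X\times X$). We use the standard facts that a product of two weakly mixing pmp actions is weakly mixing, and that the pullback of a weakly mixing $Q$-action along a surjection $\Gamma\to Q$ is a weakly mixing $\Gamma$-action. The second fact holds because a finite-dimensional $\Gamma$-invariant subspace of $L^2_0(Z)$ is automatically $Q$-invariant, as $\Gamma$ acts through $Q$. Concretely, take an ergodic pmp $\Gamma\actson Y$. Then $W\times Y$ is ergodic because $W$ is weakly mixing. Next, $Z\times(W\times Y)$ is ergodic because the pulled-back action on $Z$ is weakly mixing as a $\Gamma$-action. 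Hence $X$ is weakly mixing.

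\emph{Main obstacle.} The main point is checking that the pullback of a weakly mixing $Q$-action remains weakly mixing as a $\Gamma$-action. The cleanest route is the spectral characterization above: if $\Gamma\actson Z\times Y$ had an invariant function, the product $\Gamma$-action on $Z\times Z$ would fail to be ergodic, and ergodicity on $Z\times Z$ depends only on the image $\rho(\Gamma)=Q$.

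\emph{Examples.} For $\Z^d$ with $d\ge2$, project onto the first coordinate; the kernel is $\Z^{d-1}$, which is infinite. For $\F_d$ with $d\ge2$, map onto $\Z$ by sending $a_1\mapsto1$ and all other generators to $0$; the kernel contains $\langle a_2\rangle$, which is infinite.
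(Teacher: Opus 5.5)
Your proof is correct and follows the paper's argument. You pull back a free weakly mixing action of the quotient along the surjection, so the infinite kernel acts trivially and total ergodicity fails, and then take the diagonal product with a free weakly mixing $\Gamma$-action to get freeness, using that diagonal products of pmp weakly mixing actions are weakly mixing. Your justification that the pullback stays weakly mixing is that ergodicity on $Z\times Z$ depends only on the image $\rho(\Gamma)=Q$. This is the same double-ergodicity point the paper invokes through its remark that pmp weak mixing is upward closed. Your version states it more precisely, since that remark is phrased for subgroups rather than quotients.
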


\begin{proof}
Let $\Gamma$ be as above, and let $\phi: \Gamma \to \Delta$ be the aforementioned homomorphism.
Let $\Delta \actson (X, \mu)$ be any weakly mixing free pmp action (\eg a Bernoulli shift). 
Extend this to an action $\Gamma \actson^\alpha (X, \mu)$ through $\phi$, so $\ker(\phi)$ acts trivially.
Then by \cref{wm_upward_closed}\labelcref{item:pmp_wm_upward-closed}, since the original $\Delta$-action is weakly mixing, $\alpha$ is weakly mixing.
Observe that $\alpha$ is not totally ergodic since the action of $\ker(\phi)$ is not ergodic.
However, $\alpha$ is not free.

To obtain a free action, let $\Gamma \actson^\beta (Y, \nu)$ be any weakly mixing free pmp action.
Then the diagonal action $\Gamma \actson^{\alpha \times \beta} (X \times Y, \mu \times \nu)$ is still weakly mixing since the diagonal product of two pmp weakly mixing actions is weakly mixing (indeed, if $\rho$ is an ergodic pmp $\Gamma$-action then $\beta \times \rho$ is ergodic pmp since $\beta$ is weakly mixing, hence $\alpha \times \beta \times \rho$ is ergodic since $\alpha$ is weakly mixing).
However, $\alpha \times \beta$ is not totally ergodic, since the action of $\ker(\phi)$ is not ergodic.
\end{proof}

\begin{question}
For which countable groups does chaining (resp., weak mixing) imply total ergodicity for free pmp actions?
Could it be that the converse of \cref{extension_of_TD} holds, \ie the class of countable groups for which weak mixing implies total ergodicity is exactly that of extensions of infinite groups by infinite subgroups?
\end{question}

\section{Bounded Chaining}\label{sec:bounded_chaining_vs_others}

In this section we place bounded chaining in the heirarchy of weak mixing and related conditions for mcp group actions.
It is easy to see (\cref{DE_implies_1C}) that double ergodicity implies 1-chaining, which is explicitly observed in \cite{LS17}, and we prove that bounded chaining implies metric ergodicity (\cref{essential_bounded_chaining_implies_ME}), which in turn implies weak mixing by \cite[Theorem 1.1-1.2]{Glasner-Weiss:weak_mixing}.
In fact we show that a weakening of bounded chaining, which we call \emph{essential bounded chaining} (see \cref{defn:ess-bounded_chaining}), already implies metric ergodicity.
Thus, we have:
\[
\text{DE} \implies
\text{1C} \implies
\text{BC} \implies
\text{ess-BC} \implies
\text{ME}
\implies
\text{UE}
\iff
\text{WM}.
\]  

As previously noted, it is trivial that measure preserving weakly mixing actions are doubly ergodic, so in the pmp setting all of the above properties are equivalent.

\begin{remark}\label{wm_upward_closed}
\leavevmode
\begin{enumerate}[(a), leftmargin=*]
\item\label{item:pmp_wm_upward-closed} It easily follows from the equivalence of weak mixing with double ergodicity for pmp actions that weak mixing is ``upward closed'' for pmp actions.
That is, given $\Delta\le \Gamma$ a pmp action $\Gamma\actson (X, \mu)$, if the restriction of the action to a $\Delta$-action is weakly mixing, then the $\Gamma$-action is also weakly mixing.
Indeed, if the $\Delta$-action is doubly ergodic, then so is the $\Gamma$-action.

\item It is also clear that $k$-chaining is ``upward closed'' (in the above sense) for all mcp actions: the group elements from $\Delta$ that witness a $k$-chain from $A$ to $B$ still witness a $k$-chain for the $\Gamma$-action.
\end{enumerate}  
\end{remark}

\subsection{Double ergodicity implies 1-chaining}

Firstly, we record the following observation, which is also stated in \cite[Section 7.1]{LS17}.

\begin{prop}\label{DE_implies_1C}
Let $\Gamma$ be a countable group and $(X, \mu)$ be a measure space.
If a measurable action $\Gamma \actson (X,\mu)$ is doubly ergodic, then it is 1-chaining.
\end{prop}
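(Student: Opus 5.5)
Given positive measure sets $A,B\subseteq X$, the plan is to apply double ergodicity to the product set $A\times B\subseteq X^2$, which has positive $\mu^2$-measure. By ergodicity of the diagonal action $\Gamma\actson(X^2,\mu^2)$, the saturation $\Gamma(A\times B)=\bigcup_{\gamma\in\Gamma}\gamma(A\times B)$ is a $\Gamma$-invariant set of positive measure, hence conull. (If the action is singular, we can avoid measurability issues with translates by arguing directly: the union over the countable group $\Gamma$ of the sets $\gamma A\times\gamma B$ is invariant and contains $A\times B$. Since its complement is also invariant, ergodicity in the sense of invariant measurable sets applies as long as $\Gamma(A\times B)$ is measurable. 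We can arrange this by taking $A,B$ Borel, replacing them by Borel subsets of positive measure if necessary, so that every translate is Borel.)

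Since $\Gamma(A\times B)$ is conull, it meets the positive-measure set $A\times A$ in positive measure. Thus some single $\gamma\in\Gamma$ satisfies $\mu^2\big((\gamma A\times\gamma B)\cap(A\times A)\big)>0$, because a countable union of null sets is null. This measure equals $\mu(\gamma A\cap A)\,\mu(\gamma B\cap A)$. In the mcp case this yields $\mu(\gamma A\cap A)>0$ and $\mu(\gamma B\cap A)>0$, so $\mu(B\cap\gamma^{-1}A)>0$ as well.

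The conclusion is slightly off from what we want, since the roles of $A$ and $B$ are shuffled. To get the form in the definition, I would instead intersect $\Gamma(A\times A)$ with $A\times B$. This produces a $\gamma$ with $\mu(\gamma A\cap A)>0$ and $\mu(\gamma A\cap B)>0$, which is exactly a $1$-chain $(A,\gamma A)$ from $A$ to $B$.

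The main (minor) obstacle is the measurability/singular case: the statement allows arbitrary measurable actions, and translates of null sets need not be null. Working with Borel representatives handles this. We choose $A,B$ Borel up to null sets, shrinking to Borel subsets of positive measure, which suffices since chaining to a subset implies chaining to the set. Then all translates $\gamma A$ are Borel and $\Gamma(A\times A)$ is a Borel invariant set. The identity $\mu^2((\gamma A\times\gamma A)\cap(A\times B))=\mu(\gamma A\cap A)\mu(\gamma A\cap B)$ holds by Fubini without any nonsingularity, so the argument goes through in general.
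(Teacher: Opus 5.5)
Your final argument is correct and is the paper's argument: the paper argues by contrapositive, observing that if no $\gamma$ works then the invariant set $\bigcup_{\gamma\in\Gamma}(\gamma A\times\gamma A)$ has positive measure but meets $A\times B$ in a null set, which is your intersection of $\Gamma(A\times A)$ with $A\times B$ read in reverse. You can drop the initial detour through $\Gamma(A\times B)$; your remark about passing to Borel subsets to handle the singular case is a reasonable extra precaution that the paper leaves implicit.
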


\begin{proof}
We prove the contrapositive.
Suppose the action is not 1-chaining, so there exists positive measure sets $A, B \subseteq X$ such that for all $\gamma \in \Gamma$, either $\gamma A \cap A$ or $\gamma  A \cap B$ is null.
Then 
\[
\tilde{A} \defeq \bigcup_{\gamma \in \Gamma} \gamma (A \times A) = \bigcup_{\gamma \in \Gamma} (\gamma A \times \gamma A)
\]
is a $\Gamma$-invariant positive measure subset of $X \times X$ but its intersection with the positive measure set $A \times B$ is null since $\Gamma$ is countable.
\end{proof}

\begin{remark}
By results of Glasner and Weiss in \cite{Glasner-Weiss:weak_mixing}, 1-chaining does not imply double ergodicity.
Indeed, they show that strong almost transitivity does not imply double ergodicity for $\Z$-actions (\cite[Example 3.5]{Glasner-Weiss:weak_mixing}). 
Since strong almost transitivity \emph{does} imply 1-chaining (\cref{remarks:chaining}\labelcref{SAT_implies_1C}), it follows that 1-chaining does not imply double ergodicity even for $\Z$-actions.

Furthermore, in \cite[Section 6]{BFMS}, the authors provide an example of an infinite measure-preserving $\Z$-action that is 1-chaining but not doubly ergodic.
It is not hard to check that an infinite measure-preserving action cannot be strongly almost transitive, even with respect to an equivalent probability measure.
\end{remark}

\subsection{Bounded chaining implies metric ergodicity}

In \cite[Proposition 7.2]{LS17}, Loh and Silva prove that 1-chaining implies metric ergodicity for nonsingular group actions (using different terminology).
Here we will show more generally that bounded chaining implies metric ergodicity;
in fact, even the following weakening of bounded chaining implies metric ergodicity.

\begin{defn}\label{defn:ess-bounded_chaining}
Let $\Gamma\actson (X, \mu)$ be a Borel action of a countable group $\Gamma$ on a standard probability space $(X,\mu)$, and let $k \in \N$.

We say that the action is \dfn{$k$-chaining on a measurable subset $W \subseteq X$} if for all sets $A,B \subseteq W$ of positive measure, $A$ $k$-chains to $B$ (where the $k$-chain is not required to be contained in or even intersect $W$).

We say that the action $\Gamma\actson (X, \mu)$ is \dfn{essentially $k$-chaining} if it is $k$-chaining on $\Gamma$-complete section $W \subseteq X$ (\ie
$\bigcup_{\gamma\in\Gamma} \gamma W =_\mu X$), which is also group generating (see \cref{def:group_generating}).

We also say that the action is \dfn{essentially boundedly chaining} if it is essentially $k$-chaining for some $k \in \N$.
\end{defn}

\begin{remark}
The example in \cref{product_nonsingularization_counterexample}
demonstrates that essential bounded chaining does not pass to a nonsingularization (since the only valid choice of $W$ is $\set{0,1}$ in that example).
\end{remark}

To prove that essential bounded chaining implies metric ergodicity, we need the following fact.

\begin{lemma}\label{ball-separation}
Let $(Z,d)$ be a second countable metric space equipped with a Borel probability measure $\mu$.
If $\mu$ is not a Dirac measure, then for each $k \ge 0$ there are balls $B_0, B_1 \subseteq X$ of arbitrarily small diameter and of positive measure such that $d(B_0,B_1) > k \cdot \max\set{\diam(B_0), \diam(B_1)}$.
\end{lemma}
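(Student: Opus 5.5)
The plan is to use the support of $\mu$. Since $Z$ is second countable, the support $\mathrm{supp}(\mu)$, the set of points all of whose open neighborhoods have positive measure, is conull. It is also closed. If $\mu$ is not a Dirac measure, then $\mathrm{supp}(\mu)$ contains two distinct points $z_0 \neq z_1$. Indeed, if the support were a single point $z$, then $\mu(\set{z}) = 1$ and $\mu = \delta_z$. Put $D \defeq d(z_0,z_1) > 0$.

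Given $k \ge 0$ and a target diameter $\eta > 0$, I choose a radius $\rho > 0$ with $2\rho \le \eta$ and $2\rho(k+1) < D$. I then take the open balls $B_i \defeq B(z_i, \rho)$ for $i = 0,1$. Each $B_i$ is a neighborhood of a point of the support, so $\mu(B_i) > 0$. The diameter bound is $\diam(B_i) \le 2\rho \le \eta$.

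For the separation, take any $x \in B_0$ and $y \in B_1$. The triangle inequality gives $d(x,y) \ge D - 2\rho$, so
\[
d(B_0,B_1) \ge D - 2\rho > 2\rho k \ge k \cdot \max\set{\diam(B_0),\diam(B_1)},
\]
where the strict inequality is the choice $2\rho(k+1) < D$. For $k = 0$ this reads $d(B_0,B_1) > 0$, which also holds since $D - 2\rho > 0$.

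The only step that needs care is the first one: that the support is conull and hence nonempty with at least two points. For this I use second countability. Cover the complement of the support by countably many basic open null sets; their union is therefore null. Everything else is a direct triangle-inequality computation. I will also read the statement's ``$B_0, B_1 \subseteq X$'' as subsets of $Z$.
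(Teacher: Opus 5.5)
Your proof is correct and follows the same route as the paper: use second countability to get a conull support, pick two distinct support points, and separate small balls around them via the triangle inequality. Your bookkeeping (radius $\rho$, diameter at most $2\rho$, separation at least $D-2\rho$, with $2\rho(k+1)<D$) is more careful than the paper's, which writes $d(B_0,B_1)\ge d(x_0,x_1)-\delta$ with $\delta$ the diameter, where $d(x_0,x_1)-2\delta$ is the bound that actually follows.
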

\begin{proof}
The second countability of $\Z$ ensures that the support of $\mu$ (\ie the $\subseteq$-least $\mu$-conull closed set) is well-defined.
The measure $\mu$ not being Dirac  implies that the support of $\mu$ contains at least two distinct points $x_0$ and $x_1$.
Let $B_0$ and $B_1$ be balls of positive diameter $\delta < \frac{d(x_0, x_1)}{(k+1)}$ centered at $x_0$ and $x_1$, respectively, so $d(B_0, B_1) \ge d(x_0, x_1) - \delta > (k + 1 - 1) \delta = k \delta$.
These are as desired since any open ball intersecting the support of $\mu$ has positive measure.
\end{proof}

\begin{figure}[ht]\label{fig:me}
\centering

\tikzset{every picture/.style={line width=0.75pt}} 

\begin{tikzpicture}[x=0.75pt,y=0.75pt,yscale=-1,xscale=1]

\draw    (331,300) -- (328,-1) ;
\draw    (252,59) .. controls (275.4,22.92) and (356.79,24.88) .. (378.45,56.51) ;
\draw [shift={(380,59)}, rotate = 240.8] [fill={rgb, 255:red, 0; green, 0; blue, 0 }  ][line width=0.08]  [draw opacity=0] (8.93,-4.29) -- (0,0) -- (8.93,4.29) -- cycle    ;
\draw   (398,100) .. controls (398,86.19) and (409.19,75) .. (423,75) .. controls (436.81,75) and (448,86.19) .. (448,100) .. controls (448,113.81) and (436.81,125) .. (423,125) .. controls (409.19,125) and (398,113.81) .. (398,100) -- cycle ;
\draw   (490.5,188.75) .. controls (490.5,174.94) and (501.69,163.75) .. (515.5,163.75) .. controls (529.31,163.75) and (540.5,174.94) .. (540.5,188.75) .. controls (540.5,202.56) and (529.31,213.75) .. (515.5,213.75) .. controls (501.69,213.75) and (490.5,202.56) .. (490.5,188.75) -- cycle ;
\draw  [line width=0.75] [line join = round][line cap = round] (41.67,81.24) .. controls (44.79,81.24) and (48.56,78.08) .. (51.62,77.23) .. controls (59.34,75.1) and (71.52,77.16) .. (78.99,80.44) .. controls (88.04,84.41) and (91.85,86.45) .. (99.73,94.06) .. controls (112.03,105.95) and (116.83,135.25) .. (103.88,147.77) .. controls (97.68,153.75) and (83.63,156.18) .. (75.67,154.18) .. controls (59.34,150.09) and (72.01,132.21) .. (63.23,123.72) .. controls (58.83,119.47) and (50.61,114.38) .. (43.33,114.1) .. controls (36.14,113.83) and (26.85,118.22) .. (21.76,113.3) .. controls (20.89,112.46) and (14.46,106.2) .. (15.95,103.68) .. controls (20.26,96.4) and (35.51,82.04) .. (44.16,82.04) ;
\draw  [line width=0.75] [line join = round][line cap = round] (215.54,200.21) .. controls (206.68,200.21) and (198.37,199.69) .. (190.65,203.41) .. controls (177.28,209.87) and (167.06,227.91) .. (179.04,239.48) .. controls (187.18,247.35) and (201.77,247.02) .. (209.73,254.71) .. controls (214,258.84) and (214,266.89) .. (214.71,272.35) .. controls (216.02,282.49) and (224.19,291.18) .. (234.62,293.19) .. controls (237.97,293.84) and (241.8,295.6) .. (245.4,294.79) .. controls (253.57,292.97) and (260.04,286.27) .. (262.82,279.56) .. controls (264.92,274.49) and (264.03,267.46) .. (269.45,263.53) .. controls (280.17,255.76) and (296.26,264.62) .. (303.46,250.71) .. controls (310.31,237.47) and (302.63,217.71) .. (288.53,215.44) .. controls (283.18,214.58) and (276.8,215.71) .. (272.77,211.43) .. controls (264.34,202.47) and (267.54,201.11) .. (266.13,188.18) .. controls (265.64,183.62) and (253.69,177.25) .. (249.55,176.96) .. controls (242.51,176.48) and (236.29,177.11) .. (231.3,180.97) .. controls (221.69,188.39) and (225.34,200.21) .. (210.56,200.21) ;
\draw  [color={rgb, 255:red, 155; green, 155; blue, 155 }  ,draw opacity=1 ][line width=0.75] [line join = round][line cap = round] (99.61,115.96) .. controls (102,115.96) and (104.88,113.61) .. (107.22,112.99) .. controls (113.11,111.41) and (122.42,112.93) .. (128.12,115.36) .. controls (135.04,118.31) and (137.94,119.82) .. (143.96,125.46) .. controls (153.36,134.28) and (157.03,155.99) .. (147.13,165.28) .. controls (142.4,169.71) and (131.66,171.51) .. (125.59,170.03) .. controls (113.12,167) and (122.79,153.74) .. (116.09,147.45) .. controls (112.73,144.3) and (106.44,140.53) .. (100.88,140.32) .. controls (95.39,140.11) and (88.29,143.37) .. (84.41,139.72) .. controls (83.74,139.1) and (78.84,134.46) .. (79.97,132.59) .. controls (83.26,127.2) and (94.91,116.55) .. (101.52,116.55) ;
\draw  [color={rgb, 255:red, 155; green, 155; blue, 155 }  ,draw opacity=1 ][line width=0.75] [line join = round][line cap = round] (144.72,138.44) .. controls (146.59,138.44) and (148.85,136.28) .. (150.69,135.71) .. controls (155.32,134.25) and (162.63,135.65) .. (167.11,137.9) .. controls (172.54,140.61) and (174.82,142) .. (179.55,147.2) .. controls (186.93,155.32) and (189.81,175.33) .. (182.03,183.88) .. controls (178.32,187.97) and (169.89,189.62) .. (165.12,188.26) .. controls (155.32,185.47) and (162.92,173.25) .. (157.65,167.46) .. controls (155.01,164.55) and (150.08,161.08) .. (145.71,160.89) .. controls (141.4,160.7) and (135.83,163.7) .. (132.78,160.34) .. controls (132.25,159.77) and (128.4,155.49) .. (129.29,153.77) .. controls (131.88,148.8) and (141.02,138.99) .. (146.21,138.99) ;
\draw  [color={rgb, 255:red, 155; green, 155; blue, 155 }  ,draw opacity=1 ][line width=0.75] [line join = round][line cap = round] (156.51,171.68) .. controls (159.41,171.68) and (162.91,169.07) .. (165.75,168.38) .. controls (172.9,166.63) and (184.21,168.32) .. (191.14,171.02) .. controls (199.54,174.29) and (203.07,175.96) .. (210.38,182.23) .. controls (221.79,192.02) and (226.25,216.12) .. (214.23,226.42) .. controls (208.48,231.35) and (195.44,233.34) .. (188.06,231.7) .. controls (172.91,228.34) and (184.66,213.62) .. (176.52,206.64) .. controls (172.44,203.14) and (164.8,198.95) .. (158.05,198.72) .. controls (151.38,198.49) and (142.76,202.11) .. (138.04,198.06) .. controls (137.23,197.37) and (131.27,192.22) .. (132.65,190.15) .. controls (136.65,184.16) and (150.8,172.34) .. (158.82,172.34) ;
\draw  [color={rgb, 255:red, 155; green, 155; blue, 155 }  ,draw opacity=1 ] (423,125) .. controls (423,111.19) and (434.19,100) .. (448,100) .. controls (461.81,100) and (473,111.19) .. (473,125) .. controls (473,138.81) and (461.81,150) .. (448,150) .. controls (434.19,150) and (423,138.81) .. (423,125) -- cycle ;
\draw  [color={rgb, 255:red, 155; green, 155; blue, 155 }  ,draw opacity=1 ] (455.5,142.5) .. controls (455.5,132.84) and (463.34,125) .. (473,125) .. controls (482.66,125) and (490.5,132.84) .. (490.5,142.5) .. controls (490.5,152.16) and (482.66,160) .. (473,160) .. controls (463.34,160) and (455.5,152.16) .. (455.5,142.5) -- cycle ;
\draw  [color={rgb, 255:red, 155; green, 155; blue, 155 }  ,draw opacity=1 ] (473,163.75) .. controls (473,152.01) and (482.51,142.5) .. (494.25,142.5) .. controls (505.99,142.5) and (515.5,152.01) .. (515.5,163.75) .. controls (515.5,175.49) and (505.99,185) .. (494.25,185) .. controls (482.51,185) and (473,175.49) .. (473,163.75) -- cycle ;

\draw (141,13.4) node [anchor=north west][inner sep=0.75pt]    {$X$};
\draw (489,11.4) node [anchor=north west][inner sep=0.75pt]    {$Z$};
\draw (301,9.4) node [anchor=north west][inner sep=0.75pt]    {$\varphi $};
\draw (423,55.4) node [anchor=north west][inner sep=0.75pt]    {$B_{0}$};
\draw (550,176.4) node [anchor=north west][inner sep=0.75pt]    {$B_{1}$};
\draw (353,242.4) node [anchor=north west][inner sep=0.75pt]    {$d( B_{0} ,B_{1}) \ \leq \ 3\cdot \text{diam}( B_{0}) \ $};
\draw (76,58.4) node [anchor=north west][inner sep=0.75pt]    {$A\ =\ \varphi ^{-1} B_{0}$};
\draw (117,265.4) node [anchor=north west][inner sep=0.75pt]    {$B=\ \varphi ^{-1} B_{1}$};
\draw (133,92.4) node [anchor=north west][inner sep=0.75pt]  [color={rgb, 255:red, 155; green, 155; blue, 155 }  ,opacity=1 ]  {$\gamma _{1} A\ =\gamma _{1} \varphi ^{-1} B_{0} \ $};
\draw (187,124.4) node [anchor=north west][inner sep=0.75pt]  [color={rgb, 255:red, 155; green, 155; blue, 155 }  ,opacity=1 ]  {$\gamma _{2} A\ $};
\draw (203,151.4) node [anchor=north west][inner sep=0.75pt]  [color={rgb, 255:red, 155; green, 155; blue, 155 }  ,opacity=1 ]  {$\gamma _{3} A\ $};
\draw (466,83.4) node [anchor=north west][inner sep=0.75pt]  [color={rgb, 255:red, 155; green, 155; blue, 155 }  ,opacity=1 ]  {$ \  \ \gamma _{1} \cdot B_{0} \ $};
\draw (492,110.4) node [anchor=north west][inner sep=0.75pt]  [color={rgb, 255:red, 155; green, 155; blue, 155 }  ,opacity=1 ]  {$\gamma _{2} \cdot B_{0} \ $};
\draw (517,138.4) node [anchor=north west][inner sep=0.75pt]  [color={rgb, 255:red, 155; green, 155; blue, 155 }  ,opacity=1 ]  {$\gamma _{3} \cdot B_{0} \ $};

\end{tikzpicture}

\caption{$k$-chaining implies metric ergodicity with $k=3$}
\end{figure}

\begin{theorem}\label{essential_bounded_chaining_implies_ME}
Let $\Gamma$ be a countable group, $(X,\mu)$ be a standard probability space, and $\Gamma \actson^\alpha (X,\mu)$ be an mcp action.
If the action $\alpha$ is essentially boundedly chaining then it is metrically ergodic.
\end{theorem}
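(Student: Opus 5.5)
Let $\Gamma \actson Z$ be an isometric action on a separable metric space $(Z,d)$ and let $\phi : X \to Z$ be a $\mu$-measurable $\Gamma$-equivariant map. Fix a group generating complete section $W$ on which $\alpha$ is $k$-chaining. The plan has two stages: first show that $\phi\rest{W}$ is essentially constant, then propagate this constant value to all of $X$ using that $W$ is group generating and a complete section.

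\emph{Stage 1: $\phi\rest W$ is essentially constant.} Suppose not, and let $\nu \defeq (\phi\rest W)_*(\mu\rest W)$, normalized; $\nu$ is a Borel measure on the separable, hence second countable, space $Z$ and is not Dirac. Apply \cref{ball-separation} with parameter $k+1$ to get balls $B_0, B_1$ of positive $\nu$-measure with $d(B_0,B_1) > (k+1)\diam(B_0)$ (the diameters are comparable, and we only need the bound for $B_0$). Put $A \defeq W \cap \phi^{-1}(B_0)$ and $B \defeq W\cap\phi^{-1}(B_1)$. Both are positive-measure subsets of $W$, so there is a $k$-chain $A,\gamma_1A,\dots,\gamma_kA$ with $\gamma_kA \cap B >_\mu 0$.

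By equivariance (modulo null sets, using that $\alpha$ is mcp so translates of null sets are null), $\phi(\gamma_i A)\subseteq \gamma_i B_0$, which is a ball of the same diameter $\delta$ because $\Gamma$ acts by isometries. Consecutive sets meet in positive measure, so consecutive balls $\gamma_iB_0,\gamma_{i+1}B_0$ intersect, and $\gamma_kB_0$ meets $B_1$. Chaining $k+1$ balls of diameter $\delta$ gives $d(B_0,B_1)\le (k+1)\delta$, a contradiction. Hence $\phi\rest W \equiv z_0$ almost everywhere.

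\emph{Stage 2: extending to $X$.} For $\gamma \in \Gamma_W$, the set $\gamma W\cap W$ has positive measure. On it, $\phi = z_0$, and also $\phi = \gamma z_0$ by equivariance. So $\gamma z_0 = z_0$, and $z_0$ is fixed by the group generated by $\Gamma_W$, which is all of $\Gamma$. Then on each $\gamma W$ we have $\phi = \gamma z_0 = z_0$ almost everywhere. Since $\bigcup_\gamma \gamma W =_\mu X$ and $\Gamma$ is countable, $\phi\equiv z_0$ almost everywhere.

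The main obstacle is the measure-theoretic care in Stage 1: making equivariance hold on a conull invariant set (intersect over the countable group, using mcp), and checking that ``$\mu$-measurable'' preimages are handled correctly. The geometric chaining estimate itself is routine.
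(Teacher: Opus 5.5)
Your proof is correct and follows the same route as the paper: push forward $\mu\rest W$, use \cref{ball-separation} to get two small, well-separated balls, and observe that a $k$-chain from $W\cap\phi^{-1}(B_0)$ to $W\cap\phi^{-1}(B_1)$, transported through the isometric action, forces $d(B_0,B_1)\le k\cdot\diam(B_0)$; your parameter $k+1$ is a harmless overestimate. Your Stage 2 spells out a step the paper only asserts from the complete-section property: constancy of $\phi$ on $W$ propagates to $X$ only because $W$ is group generating, which makes the essential value $z_0$ $\Gamma$-fixed. Taking $A,B\subseteq W$ likewise tightens a small imprecision in the paper's write-up.
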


\begin{proof}
We assume that the action $\alpha$ is essentially $k$-chaining for some $k \in \N$, so let $W$ be as in \cref{defn:ess-bounded_chaining}. 
Suppose toward a contradiction that there is a non-essentially constant equivariant measurable map $\phi : X \to Z$ to a separable metric space $(Z,d)$ on which $\Gamma$ acts by isometries.
Notice that since $\bigcup \gamma W =_\mu X$, $\phi\rest{W}$ is still non-essentially constant.

Because the pushforward measure $\phi_* (\mu\rest{W})$ on $Z$ is not a Dirac measure,
\cref{ball-separation} yields disjoint balls $B_0, B_1 \subseteq Z$ of positive $\phi_* (\mu\rest{W})$-measure such that $d(B_0,B_1) > k \cdot \diam(B_0)$.
Then $A \defeq \phi^{-1}(B_0)$ and $B \defeq \phi^{-1}(B_1)$ are measurable subsets of $X$ of positive $\mu\rest{W}$-measure.

Let $\gamma_1, \gamma_2, \dots \gamma_k$ witness that $A$ $k$-chains to $B$, \ie $\gamma_1 A \cap A >_{\mu} 0$, $\gamma_i A \cap \gamma_{i-1} A >_{\mu} 0$ for each $i \ge 2$, and $\gamma_k A \cap B >_{\mu} 0$.

Now for all $1 \le i \le k$ and a.e. $x \in X$:
\begin{align*}
x \in \phi^{-1}(\gamma_i B_0) 
&\iff  \gamma_i^{-1} \phi(x) \in B_0
\\
\eqcomment{equivariance}
&\iff 
\phi(\gamma_i^{-1} x) \in B_0
\\
&\iff 
x \in \gamma_i \phi^{-1}(B_0) = \gamma_i A,
\end{align*}

so $\phi^{-1}(\gamma_i B_0) =_{\mu} \gamma_i A $.
Thus if $\gamma_i A \cap D >_{\mu} 0$ for some measurable set $D \subseteq X$, then also $\phi^{-1}(\gamma_i B_0) \cap D >_{\mu} 0$, so $\gamma_i B_0 \cap \phi(D) \supseteq \phi(\phi^{-1}(\gamma_i B_0) \cap D) >_{\phi_* \mu} 0$.
Thus, the $\phi$-images $B_0$ and $B_1$ of $A$ and $B$ are also chained, \ie $\gamma_1 B_0 \cap B_0 \neq \emptyset$, $\gamma_i B_0 \cap \gamma_{i-1} B_0 \neq \emptyset$ for each $i \ge 2$, and $\gamma_k B_0 \cap B_1 \neq \emptyset$.

But since the action of $\Gamma$ on $(Z,d)$ is by isometries, we have $\diam(\gamma_i B_0) = \diam(B_0)$ so $d(B_0, B_1) \le k \cdot \diam(B_0)$, a contradiction.
\end{proof}

From \cref{essential_bounded_chaining_implies_ME} and \cite[Theorem 1.1-1.2]{Glasner-Weiss:weak_mixing} (and the fact that for pmp actions, weak mixing implies double ergodicity) we immediately obtain the following characterization.

\begin{cor}\label{pmpwmchaining}
Let $(X,\mu)$ be a standard probability space, and let $\Gamma\actson^\alpha (X,\mu)$ be a pmp action of a countable group $\Gamma$.
Then the following are equivalent:
\begin{enumerate}
\item $\alpha$ is weakly mixing.
\item $\alpha$ is 1-chaining.
\item $\alpha$ is boundedly chaining.
\end{enumerate}
\end{cor}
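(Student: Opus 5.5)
We will prove the cycle of implications (1)$\imp$(2)$\imp$(3)$\imp$(1).

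\textbf{(2)$\imp$(3).} This is immediate, since $1$-chaining is $k$-chaining with $k=1$ (\cref{defn:chaining}), and $1$-chaining implies bounded chaining.

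\textbf{(3)$\imp$(1).} Bounded chaining implies essential bounded chaining: take $W = X$, which is trivially a complete section. It is also group generating, because $\Gamma_X = \Gamma$ when $\mu(X)>0$. Hence \cref{essential_bounded_chaining_implies_ME} shows that $\alpha$ is metrically ergodic. By \cite[Theorem 1.1--1.2]{Glasner-Weiss:weak_mixing}, metric ergodicity implies weak mixing.

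\textbf{(1)$\imp$(2).} Since $\alpha$ is pmp, we use the standard fact that weak mixing implies double ergodicity. For the pmp diagonal action on $(X^2,\mu^2)$, ergodicity of $\alpha\times\alpha$ is equivalent to weak mixing of a pmp action: both are equivalent to the absence of nonconstant finite-dimensional invariant subspaces of $L^2$, and this holds in particular with $Y = X$. More directly, $X$ itself is ergodic pmp, because weak mixing implies ergodicity (take $Y$ a point). Therefore the definition of weak mixing, applied with $(Y,\nu) = (X,\mu)$, gives that $\Gamma\actson X\times X$ is ergodic, i.e.\ $\alpha$ is doubly ergodic. Then \cref{DE_implies_1C} gives $1$-chaining.

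The only point needing care is this last step. It is the remark, already made in the paper, that a pmp weakly mixing action is doubly ergodic: we plug the action itself into the definition of weak mixing, after first noting that it is ergodic.
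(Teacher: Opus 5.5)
Your proposal is correct and follows the paper's own route: weak mixing gives double ergodicity for pmp actions, then \cref{DE_implies_1C} gives 1-chaining, and bounded chaining is essential bounded chaining with $W = X$, hence metric ergodicity by \cref{essential_bounded_chaining_implies_ME} and weak mixing by \cite[Theorem 1.1--1.2]{Glasner-Weiss:weak_mixing}. The paper only cites weak mixing implies double ergodicity as a known fact, and your justification of it (ergodicity from a one-point $Y$, then taking $(Y,\nu) = (X,\mu)$ in the definition) is sound; the preceding $L^2$ remark is not needed.
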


\section{Bounded chaining for boundary actions of free groups}\label{sec:boundary_actions}

This section is devoted to the natural action of the free group on its boundary, equipped with a Markov measure.
We refer to \cref{subsec:Markov_chains,subsec:pos_stationary_distrib,subsec:boundary} for definitions, notation, and statements regarding Markov measures and boundary actions.
We collect our running notation and assumptions into the following hypothesis, which we assume throughout this section, in order to declutter the statements.

\begin{hypothesis}\label{hyp}
Let
\begin{itemize}[leftmargin=*]
\item $1 \le r < \infty$;
\item $\F_r$ denote the free group of rank $r$ on set $\set{a_1,\dots,a_r}$ of free generators;
\item $\Symb \defeq \set{a_1^{\pm 1}, \dots, a_r^{\pm 1}}$;
\item $\mu$ be a Markov measure on $\partial \F_r$, whose stochastic matrix $P$ admits a stationary distribution; 
in particular, for all $a \in \Symb, P(a,a^{-1}) = 0$;
\item $L$ be the set of $\mu$-legal words, \ie 
\[
L = \set{x \in \partial \F_r : P(x_i, x_{i+1}) > 0 \; \text{ for all } i\in\N};
\]
\item $\F_r \actson^\beta (\partial \F_r,\mu)$ be the boundary action;
\item $\tilde{\mu}$ be a nonsingularization of $\mu$ given by $\tilde{\mu} = \sum_{n \ge 1} 2^{-n} ({\gamma_n})_* \mu$, where $(\gamma_n)_{n \ge 1}$ is an arbitrary enumeration of $\F_r$.
\end{itemize}
\end{hypothesis}

\noindent Furthermore, for each $w \in \Words$, we abuse notation and denote $[w] \defeq \set{x \in \partial \F_r : x \text{ extends } w}$.

\subsection{Charactarizations of weakly mixing boundary actions}

In this section, we prove \cref{intro:wm_boundary}, restated below as \cref{wm_boundary}.

\begin{lemma}\label{L_space-group-generating}
Assume \cref{hyp}.
Then $L$ is a group generating complete section with respect to the nonsingularization $\F_r \actson (\partial \F_r, \~\mu)$.
\end{lemma}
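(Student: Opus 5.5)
The plan is to verify the two properties in \cref{defn:ess-bounded_chaining} separately: $L$ is a complete section, and $L$ is group generating. Throughout, all positivity and conullness statements are with respect to $\~\mu$. Two facts do most of the work. The first is that $\mu(L)=1$. This holds because $\mu$ is the Markov measure of $P$: every cylinder $[w]$ with $w$ not $P$-legal is $\mu$-null, and there are only countably many such cylinders. The second is that $\mu \ll \~\mu$, which is immediate from $\~\mu = \sum_{n\ge1}2^{-n}(\gamma_n)_*\mu$, taking the term with $\gamma_n = 1$.

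\emph{Complete section.} For each $n$, the set $\gamma_n L$ is $(\gamma_n)_*\mu$-conull, since $(\gamma_n)_*\mu(\gamma_n L)=\mu(L)=1$. Hence $\bigcup_{\gamma\in\F_r}\gamma L$ is conull for every measure $(\gamma_n)_*\mu$. It is therefore conull for the countable convex combination $\~\mu$, so $\bigcup_{\gamma}\gamma L =_{\~\mu} \partial\F_r$.

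\emph{Group generating.} Fix $s\in\Symb$. We show $s^{-1}$ is a return time of $L$, i.e.\ $\~\mu(s^{-1}L\cap L)>0$. Since $\mu\ll\~\mu$, it suffices to show $\mu(s^{-1}L\cap L)>0$.

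Take $x\in L\cap[s]$. Because $x$ is reduced and begins with $s$, the boundary action gives $s^{-1}\cdot x=\shift x$. A tail of a legal word is legal, so $\shift x\in L$, and thus $s^{-1}L\cap L\supseteq \shift(L\cap[s])$.

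Now $P$ is stochastic, so there is $b\in\Symb$ with $P(s,b)>0$. For any $y\in L\cap[b]$, the word $sy$ is reduced because $P(s,s^{-1})=0$ forces $b\neq s^{-1}$. It is also legal, so $sy \in L\cap[s]$ and $y=\shift(sy)$. Therefore $\shift(L\cap[s])\supseteq L\cap[b]$. Finally, $\mu(L\cap[b])=\mu([b])=\pi(b)>0$, because the initial distribution $\pi$ is strictly positive by the definition of a Markov measure.

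It follows that $s^{-1}\in\F_r{}_L$ for every $s\in\Symb$. Since $\Symb$ is closed under inverses, the return times contain all of $\Symb$, so they generate $\F_r$.

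The only step that needs care is the inclusion $s^{-1}L\cap L\supseteq L\cap[b]$. It uses reducedness, namely that $P(s,s^{-1})=0$ prevents cancellation, so that $s^{-1}$ acts exactly as the shift on $[s]$. It also uses the fact that positivity is witnessed with respect to $\mu$ and transferred to $\~\mu$ via $\mu\ll\~\mu$. Nothing beyond these elementary facts seems to be required.
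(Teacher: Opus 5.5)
Your proposal is correct and follows essentially the same route as the paper. The paper gets the complete-section property from $\mu(L)=1$ and the definition of $\tilde{\mu}$. For group generation it takes, for each $a\in\Symb$, some $b$ with $P(a^{-1},b)>0$ and writes $\tilde{\mu}(aL\cap L)\ge\mu(aL\cap L)\ge\mu([b])>0$, which is your argument with $a=s^{-1}$; you have just spelled out the details the paper leaves implicit.
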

\begin{proof}
That $L$ is a complete section follows directly from the definition of $\~\mu$ in \cref{hyp} and the fact that $L$ is $\mu$-conull.

The set $L$ is also group generating because each $a \in S$ is a return time for $L$.
Indeed, let $b \in \Symb$ with $P(a^{-1},b) > 0$. 
Then $\tilde{\mu}(a L \cap L) \ge \mu(a L \cap L) \ge \mu([b]) > 0$.
\end{proof}

\begin{lemma}\label{Ess-bounded_chaining_for_nonsingularization}
Assume \cref{hyp}.
Then the boundary action $\F_r \actson^\beta (\partial \F_r,\mu)$ is $k$-chaining if and only if its nonsingularization $\F_r \actson^\beta (\partial \F_r,\~\mu)$ is $k$-chaining on $L$.
In particular, if $\F_r \actson^\beta (\partial \F_r,\mu)$ is $k$-chaining, then $\F_r \actson^\beta (\partial \F_r,\~\mu)$ is essentially $k$-chaining.
\end{lemma}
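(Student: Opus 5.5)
The plan is to show that, for subsets of $L$ and their translates, $\mu$-positivity and $\tilde\mu$-positivity of the relevant intersections coincide. The "in particular" clause then follows by combining this with \cref{L_space-group-generating}.

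\textbf{Step 1 (comparison of null sets).} Since $\mu \ll \tilde\mu$, every $\mu$-positive set is $\tilde\mu$-positive. Conversely, for $C \subseteq L$ we have $\tilde\mu(C) > 0$ if and only if $\mu(C)>0$. For this I want $(\gamma_n)_*\mu \rest{L} \ll \mu$, i.e.\ $\mu(\gamma^{-1}C)=0 \Rightarrow$ wait — more precisely, if $\mu(C)=0$ with $C\subseteq L$, then $\mu(\gamma^{-1} C)=0$ for all $\gamma$. This does not follow from mcp-ness, because the action is singular, so it has to come from the Markov structure. A point $x \in \gamma^{-1}C$ with $\gamma x \in L$ differs from $\gamma x$ only in a finite prefix, and both are legal beyond some index. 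Decompose according to that prefix: on a cylinder where $\gamma$ replaces a legal prefix $u$ by a legal prefix $v$ with the same tail, the Markov property (\cref{MP}) says the two measures are constant multiples of each other (both equal $\mu_{s}$ of the tail up to positive constants). Hence $\gamma_*(\mu\rest{\gamma^{-1}L}) \ll \mu\rest{L}$ and $\mu\rest{L}\sim\tilde\mu\rest{L}$.

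I expect this to be the main obstacle. The points to check are that the prefix replacement really preserves the tail event and that positivity of $\pi$ and of the transition weights makes the constants nonzero on legal words; at most countably many cylinders are involved. I would also note that $\tilde\mu(\gamma C)$ and $\mu(\gamma C)$ for general $\gamma C$ (not contained in $L$) need not agree. This is why the statement restricts $A,B$ to subsets of $L$, and every intersection in a chain has to be handled carefully.

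\textbf{Step 2 (transferring chains).} The intersections to test are $\gamma_iA\cap\gamma_{i+1}A$ and $\gamma_kA\cap B$. For the first, use $\gamma_iA\cap\gamma_{i+1}A \tilde\mu$-positive iff $A\cap\gamma_i^{-1}\gamma_{i+1}A$ is $\tilde\mu$-positive ($\tilde\mu$ is mcp) iff $(A\cap\gamma_i^{-1}\gamma_{i+1}A)\cap L$ is $\tilde\mu$-positive (since $A\subseteq L$) iff it is $\mu$-positive (Step 1). For the $\mu$ side, use that $\mu$ is supported on $L$ together with Step 1's quasi-invariance statement, which gives that $\mu(\gamma D)>0 \iff \mu(D\cap \gamma^{-1}L)>0$ for $D\subseteq L$. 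The intersection $\gamma_kA\cap B$ lies in $B\subseteq L$, so it transfers directly.

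For general $A,B$ of positive $\mu$-measure, replace them by $A\cap L$ and $B\cap L$; these are $\mu$-conull subsets of $A$ and $B$. So $\mu$-$k$-chaining is equivalent to $k$-chaining for subsets of $L$, and both directions follow.

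\textbf{Step 3 (the "in particular").} By \cref{L_space-group-generating}, $L$ is a group generating complete section for $\tilde\mu$. So $k$-chaining on $L$ gives essential $k$-chaining by \cref{defn:ess-bounded_chaining}.
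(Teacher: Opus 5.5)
Your Steps 1 and 3 follow the paper's proof. In both, everything rests on the quasi-invariance of $\mu$ along the orbit relation restricted to $L$; your prefix-replacement argument via \cref{MP} is the step the paper calls straightforward. This gives $\mu\rest{L}\sim\tilde\mu\rest{L}$, and the ``in particular'' clause then comes from \cref{L_space-group-generating}.

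The gap is in Step 2, in the direction ``$\tilde\mu$ is $k$-chaining on $L$ $\Rightarrow$ $\mu$ is $k$-chaining'' for $k\ge 2$. Put $D \defeq A\cap\gamma_i^{-1}\gamma_{i+1}A\subseteq L$. You correctly find that the intermediate link $\gamma_iA\cap\gamma_{i+1}A=\gamma_iD$ is $\tilde\mu$-positive iff $\mu(D)>0$, and that it is $\mu$-positive iff $\mu(D\cap\gamma_i^{-1}L)>0$. These two conditions are not equivalent, and nothing in your argument bridges them before you conclude that ``both directions follow''. Indeed, $\gamma_i$ can carry all of $D$ off $L$: if $D\subseteq[t]$ and $\gamma_i=g\in\Symb$ with $g\ne t^{-1}$ and $P(g,t)=0$, then $\gamma_iD\subseteq[gt]$ is $\mu$-null but $\tilde\mu$-positive. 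In a $\tilde\mu$-chain only $\gamma_1A$ and $\gamma_kA$ are forced to meet $L$ in positive measure, via the first and last links, which sit inside $A$ and $B$. The intermediate translates, and the links between them, may lie entirely off $L$. So a given $\tilde\mu$-chain need not be a $\mu$-chain. This direction needs an extra argument that produces a possibly different $\mu$-chain of the same length, and none is given.

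Your argument does establish the following:
\begin{itemize}
\item Both directions when $k\le 1$, since all links then lie in $L$.
\item For every $k$, the direction ``$\mu$ is $k$-chaining $\Rightarrow$ $\tilde\mu$ is $k$-chaining on $L$''. Here $\tilde\mu\rest{L}\ll\mu\rest{L}$ makes $\tilde\mu$-positive sets $A,B\subseteq L$ also $\mu$-positive, and $\mu\ll\tilde\mu$ turns every $\mu$-chain into a $\tilde\mu$-chain.
\end{itemize}
The second item is all that the ``in particular'' clause needs.

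The paper's own proof transfers chains in one sentence, directly from $\mu\rest{L}\sim\tilde\mu\rest{L}$, and that equivalence only controls subsets of $L$. So it passes over the same point about intermediate links and does not supply the missing step either.
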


\begin{proof}
The ``in particular'' part follows from \cref{L_space-group-generating}.
It is straightforward to verify that for $A \subseteq L$ and $\gamma \in \F_r$ with $\gamma A \subseteq L$, we have $\mu(\gamma A) > 0$ exactly when $\mu(A) > 0$, so $E_\beta\rest{L}$ is $\mu$-mcp.
It then follows by the definition of $\~\mu$ in \cref{hyp} that $\mu\rest{L} \sim \~\mu \rest{L}$, so existence of $k$-chains on subsets of $L$ holds or fails simultaneously with respect to the two measures $\mu$ and $\~\mu$.
The conclusion now follows because $L$ is $\mu$-conull.
\end{proof}

\begin{lemma}\label{n_dance_n_chaining}
Assume \cref{hyp}.
If $P$ is strictly irreducible, then $\F_r \actson (\partial \F_r, \mu)$ is $k$-chaining, where $k$ is the diameter of the (undirected) graph $G_{P^t P}$.
\end{lemma}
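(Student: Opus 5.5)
The plan is to use Lebesgue density (\cref{Lebesgue_density}) and the Markov property (\cref{MP}) to turn any positive-measure set $A$ into translates of $A$ that are almost full near any prescribed state. Strict irreducibility of $P$ then lets us link consecutive translates along an in-and-out zigzag trail in $G_P$. Each zigzag step will cost exactly one new translate, giving a chain of length at most the diameter of $G_{P^tP}$.

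\emph{Step 1: definitions and the Lebesgue density step.} Since $P$ is irreducible, $\Rec = \Symb$. Fix $\e>0$ smaller than $\tfrac12\min\set{P(a,c) : P(a,c)>0}$ (note $\Symb$ is finite). For $T \subseteq \partial\F_r$ and $a\in\Symb$, say $T$ is \emph{$\e$-full at $a$} if $\mu_a(T)>1-\e$. Given positive-measure sets $A,B$ and any states $s,t$, \cref{Lebesgue_density} gives $\mu$-legal words $u=(u_0,\dots,u_n)$ with $u_n=s$ and $v=(v_0,\dots,v_m)$ with $v_m = t$, such that $\mu_u(A)>1-\e$ and $\mu_v(B)>1-\e$. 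Let $u'\defeq u_0\cdots u_{n-1}\in\F_r$ (a reduced word). Then $u'^{-1}$ maps $[u]$ onto $[s]$ by deleting the prefix, i.e.\ it acts there as $\shift^{n}$. By \cref{MP}, $u'^{-1}A$ is $\e$-full at $s$. Likewise, with $v' \defeq v_0\cdots v_{m-1}$, the set $v'^{-1}B$ is $\e$-full at $t$.

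\emph{Step 2: the basic propagation observation.} Suppose a translate $T=\gamma A$ is $\e$-full at $a$. Apply the generator $a^{-1}$, which deletes the first letter on $[a]$. Since $\mu_a$ restricted to $[ac]$ is $P(a,c)\,\mu_c$ transported by the shift, \cref{MP} gives
\[
\mu_c(a^{-1}T) > 1-\e/P(a,c) > 1/2 \quad\text{for every successor $c$ of $a$, i.e.\ every $c$ with } P(a,c)>0 .
\]
Consequently, if $a$ and $b$ have a common successor $c$, and $T_a, T_b$ are translates of $A$ that are $\e$-full at $a$ and at $b$ respectively, then $a^{-1}T_a$ and $b^{-1}T_b$ are both more than half-full with respect to $\mu_c$. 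So they meet in positive $\mu$-measure inside $[c]$.

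Since $A$ is fixed and only its translates vary, such translates exist for every state: $\e$-fullness at $s$ from Step~1 can be moved to any state $a$ by applying the Lebesgue density step to $A$ with $s=a$. Thus for each state $a$ we may fix a translate $Y_a$ of $A$ such that $a^{-1}Y_a$ is more than half-full at every successor of $a$. Common successors are precisely adjacency in $G_{PP^t}$. To obtain the statement with $G_{P^tP}$, as written, I would run the same argument with predecessors instead of successors: apply the generator $c$ (prepending) to a translate $\e$-full at $a$, use $\mu_c([ca])=P(c,a)>0$, and compare $c\,T_a$ with $c\,T_{a'}$ when $c\to a$ and $c\to a'$. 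This matches the in-and-out zigzag description of $P^tP$-reachability.

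\emph{Step 3: building the chain.} Take an in-and-out zigzag trail $a_0,c_0,a_1,\dots,c_{k'-1},a_{k'}$ in $G_P$ with $k'\le k$. Choose $a_0\defeq s$ and $a_{k'}\defeq t$, and choose $u,v$ as in Step~1 accordingly. The translates are: $A$ itself, positioned by $u'$ so that $u'^{-1}A$ is $\e$-full at $a_0$; then, for each $i$, a translate full near $a_i$, obtained by translating the $\e$-full copy at $a_i$ by the appropriate group elements built from $u'$, $c_i$ and the generators. Consecutive translates meet by Step~2. The last translate is moved by $v'$ so that it meets $B$ inside $[v]$, using $\mu_v(B)>1-\e$. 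Since $A$ serves as the zeroth translate, this produces at most $k$ new translates, so $A$ $k$-chains to $B$; padding with repeats (\cref{remarks:chaining}) gives exactly $k$.

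\emph{Main obstacle.} The hard part is the bookkeeping of group elements across the zigzag, so that all translates live in one coordinate frame: the $\gamma_i$ must be expressed relative to $u'$, and the reduced-word cancellations must be legal. This requires $c\neq a^{-1}$, which holds because $P(c,a)>0$ and $P(c,c^{-1})=0$. The other delicate point is checking that translating by a word genuinely acts as a shift or prefix on the relevant cylinders, so that \cref{MP} applies. I would first verify the case $k=1$ carefully and then induct along the trail.
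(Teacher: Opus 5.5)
There is a genuine gap in how you assemble the chain. First, the ``predecessor'' variant does not work. Suppose $T_a$ and $T_{a'}$ are $\e$-full at $a\neq a'$, with $c\to a$ and $c\to a'$. Then $c\,T_a$ is nearly full on $[ca]$ and $c\,T_{a'}$ is nearly full on $[ca']$. These cylinders are disjoint, and you have no control of $c\,T_a$ on $[ca']$, so nothing forces $\mu(c\,T_a\cap c\,T_{a'})>0$. For the same reason, translates indexed by the $a_i$ (``full near $a_i$'') cannot be linked, because $a_i$ and $a_{i+1}$ share a predecessor, not a successor.

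What realizes a zigzag step is your own successor estimate, applied at the \emph{center}. If $w_ic_i$ is legal and $\mu_{w_ic_i}(A)$ is close to $1$, then the single translate $(w_ic_i)^{-1}A$ is nearly full at every successor of $c_i$, in particular at both $a_i$ and $a_{i+1}$. Index the translates by the centers $c_0,\dots,c_{k'-1}$; then consecutive ones overlap in $[a_{i+1}]$. The centers form a $PP^t$-walk, but the ends of the chain attach at $a_0$ and $a_{k'}$. So the governing path is exactly the $P^tP$ zigzag, and no switch to predecessors is needed.

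Second, and more seriously, the attachment of $A$ and $B$ fails. A chain must start at $A$ itself and end meeting $B$ itself, all in one frame; you cannot position the start by $u'$ and the end by $v'$. The boundary action is singular, so translating by $u'$ does not preserve positivity of intersections: $u'$ maps $[d]$ into the null cylinder $[u'd]$ whenever $d\neq u_{n-1}^{-1}$ and $P(u_{n-1},d)=0$. Hence the overlaps you produce in $[a_1],[a_2],\dots$ can be destroyed. A symptom is that your $s,t$ are arbitrary and your $\e$ does not depend on $A$ or $B$. With $s=t$, your recipe would give a chain of length at most one between any two positive-measure sets, which is false for the strictly irreducible examples of \cref{strict_essBC_hierarchy}.

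The endpoints must be states where $A$ and $B$ themselves carry mass, and the fullness threshold must beat that mass. Concretely:
\begin{enumerate}[(i)]
\item Choose $a,b$ and $\e>0$ with $\mu_a(A)>\e$ and $\mu_b(B)>\e$.
\item Take a zigzag from $a_0=a$ to $a_{k'}=b$ with $1\le k'\le k$.
\item At each center, use \cref{Lebesgue_density} to get a legal word $w_ic_i$ with $\mu_{w_ic_i}(A)>1-\delta$, where $\delta\defeq\e\cdot\min\set{P(x,y):P(x,y)>0}$.
\end{enumerate}
Set $\gamma_{i+1}\defeq(w_ic_i)^{-1}$. Your Step~2 computation then gives $\mu_{a_i}(\gamma_{i+1}A)>1-\e$ and $\mu_{a_{i+1}}(\gamma_{i+1}A)>1-\e$. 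Therefore $A\cap\gamma_1A$, $\gamma_iA\cap\gamma_{i+1}A$, and $\gamma_{k'}A\cap B$ have positive measure inside $[a_0]$, $[a_i]$, and $[a_{k'}]$ respectively, with no repositioning at all. This is the paper's argument. Note that ``more than half full'' would not suffice at the two ends, since $\mu_a(A)$ may be tiny.
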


\begin{proof}
Let $A, B \subseteq \partial \F_r$ have positive measure. 
Then there are $a, b \in \Symb$ and $\frac{1}{2} > \e > 0$ such that $\mu_a(A) > \e$ and $\mu_b(B) > \e$. 
By assumption, there is a finite sequence $a_1, b_1, a_2, b_2, \dots, b_{k-1}, a_k \in \Symb$ such that $a_1 = a$, $a_k = b$ and for each $i < k$,
\[
P(b_i, a_i) > 0 \text{ and } P(b_i, a_{i+1}) > 0.
\tag{$\ast$}
\label{dance_moves}
\]
Because $\Symb$ is finite, there is a $\delta > 0$ such that for all Borel sets $C \subseteq \partial \F_r$, and all $P$-legal words $ws$ with $w \in \Words$ and $s \in \Symb$, we have
\[
\mu_w(C) > 1 - \delta \implies \mu_{ws}(C) > 1 - \e.
\]

For each $i<k$, Lebesgue density (\cref{Lebesgue_density}) gives a $P$-legal word $w_i b_i$ with $w_i \in \Words$ such that $\mu_{w_i b_i}(A) > 1 - \delta$, so by \labelcref{dance_moves}, we get that both $\mu_{w_i b_i a_i}(A)$ and $\mu_{w_i b_ia_{i+1}}(A)$ are greater than $1 - \e$. 
Now the Markov property (\cref{MP}) yields
\[
\mu_{a_i}((w_i b_i)^{-1}A) > 1 - \e \; \text{ and } \; \mu_{a_{i+1}}((w_i b_i)^{-1}A) > 1-\e.
\]
Hence $\mu((w_i b_i)^{-1} A \cap (w_{i+1} b_{i+1})^{-1} A) > 0$ for each $1 \le i < k-1$, 
as well as $\mu(A \cap (w_1 b_1)^{-1} A) > 0$ and $\mu((w_{k-1} b_{k-1})^{-1} A \cap B) > 0$ since $a_1 = a$ and $a_k = b$. 
Thus $\gamma_i \defeq w_i b_i$ is as desired.
\end{proof}

\begin{defn}\label{symmetry}
Let $\Symb$ and $P$ be as in \cref{hyp}.
We say that $P$ is \dfn{weakly inverse-symmetric} if for all $a,b \in \Symb$ with $b \ne a^{\pm 1}$, we have 
\[
P(a,b) = 0 \iff P(a^i,b^j) = 0 \text{ for all } i,j \in \set{\pm 1}.
\]
\end{defn}

\begin{theorem}[Weak mixing criteria for boundary actions]\label{wm_boundary}

Assume \cref{hyp}. 
Suppose in addition that $P$ is weakly inverse-symmetric.
Then the following are equivalent:

\smallskip

\begin{enumerate}[(1),leftmargin=*]
\item\label{item:wm} The boundary action $\F_r \curvearrowright^\bdryaction (\partial \F_r,\mu)$ is weakly mixing.

\item\label{item:me} The nonsingularization $\F_r \curvearrowright^\bdryaction (\partial \F_r,\~\mu)$ is metrically ergodic.

\item\label{item:bounded_chaning} The boundary action $\F_r \curvearrowright^\bdryaction (\partial \F_r,\mu)$ is boundedly chaining.

\item\label{item:ess(S-1)-chaning} The nonsingularization $\F_r \curvearrowright^\bdryaction (\partial \F_r,\tilde{\mu})$ is essentially $(|S|-1)$-chaining.

\item\label{item:(S-1)-chaning} The boundary action $\F_r \curvearrowright^\bdryaction (\partial \F_r,\mu)$ is $(|S|-1)$-chaining.

\item\label{item:strictly-irreducible} $P$ is strictly irreducible.
\end{enumerate}
\end{theorem}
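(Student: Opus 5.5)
The plan is to run a cycle of implications. Almost every link is already an earlier result; the one genuinely new step is \labelcref{item:wm}$\imp$\labelcref{item:strictly-irreducible}.

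\emph{The easy links.}
\begin{itemize}
\item \labelcref{item:strictly-irreducible}$\imp$\labelcref{item:(S-1)-chaning}. \cref{n_dance_n_chaining} gives $k$-chaining with $k$ the diameter of $G_{P^tP}$, and that diameter is at most $|\Symb|-1$. Since $k$-chaining implies $(k+1)$-chaining (\cref{remarks:chaining}), the action is $(|\Symb|-1)$-chaining.
\item \labelcref{item:(S-1)-chaning}$\imp$\labelcref{item:bounded_chaning}. This is trivial.
\item \labelcref{item:(S-1)-chaning}$\imp$\labelcref{item:ess(S-1)-chaning} and \labelcref{item:bounded_chaning}$\imp$\labelcref{item:me}. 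Both come from the ``in particular'' part of \cref{Ess-bounded_chaining_for_nonsingularization}. For the second, a $k$-chaining boundary action has an essentially $k$-chaining nonsingularization, and \cref{essential_bounded_chaining_implies_ME} then makes the nonsingularization metrically ergodic.
\item \labelcref{item:ess(S-1)-chaning}$\imp$\labelcref{item:me}. This is \cref{essential_bounded_chaining_implies_ME} directly.
\item \labelcref{item:me}$\imp$\labelcref{item:wm}. Metric ergodicity implies weak mixing for the mcp action on $(\partial\F_r,\~\mu)$ by Glasner--Weiss. \cref{WM_nonsingularization} transfers weak mixing back to $\mu$.
\end{itemize}

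\emph{The hard link, \labelcref{item:wm}$\imp$\labelcref{item:strictly-irreducible}.} By \cref{WM_implies_P_irreducible}, $P$ is irreducible. By \cref{obs:irreducible} and the remark after it, it then suffices to show that $P^tP$ is irreducible. Suppose not, and let $\Symb=C_1\sqcup\dots\sqcup C_m$ with $m\ge2$ be the classes of the in-and-out zigzag relation. By definition, all out-neighbours in $G_P$ of a state $c$ lie in a single class, which I call $f(c)$. Let $[s]$ denote the class of $s$.

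I would aim to build a finite set $Y$ with an $\F_r$-action and a nonconstant equivariant map $\phi:\partial\F_r\to Y$, defined $\~\mu$-a.e. Restricting to one orbit, $Y$ carries an ergodic pmp action (uniform measure on a transitive finite set). The invariant set $\set{(x,y):y=\phi(x)}$ is then neither null nor conull, so the diagonal action is not ergodic, contradicting weak mixing. (Equivalently, $\phi$ composed with $Y\to\ell_2(Y)$ contradicts unitary ergodicity, as in \cref{prop:generating_section}.)

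\emph{First attempt at $Y$ and $\phi$.} Take $Y=\set{C_1,\dots,C_m}$ and $\phi(x)=[x_0]$ on legal words, and assign to each generator $a$ a permutation $\sigma_a$ of the classes. Compatibility on legal words imposes two conditions:
\begin{itemize}
\item for $x$ with $P(a,x_0)>0$, we need $\sigma_a(f(a))=[a]$;
\item for $x$ with $x_0=a^{-1}$, where $ax=\shift x$, we need $\sigma_a([a^{-1}])=f(a^{-1})$.
\end{itemize}
A permutation with these two prescribed values exists provided the two constraints are consistent, namely $f(a)=[a^{-1}]\iff[a]=f(a^{-1})$. This is where I expect to use weak inverse-symmetry: it says the edge pattern is invariant under inverting endpoints (away from $b=a^{\pm1}$), which should make the zigzag classes and $f$ behave well under $s\mapsto s^{-1}$.

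The remaining difficulty is extending $\phi$ equivariantly to all of $\F_r\cdot L$, which carries the $\~\mu$-mass, and checking well-definedness. For $x=\gamma y$ with $y\in L$, I set $\phi(x)=\sigma_\gamma\phi(y)$. This is well defined exactly when every $\gamma$ with $\gamma L\cap L$ nonnull satisfies $\sigma_\gamma[y_0]=[(\gamma y)_0]$. I would verify this by induction on $|\gamma|$, tracking the cancellation of $\gamma$ against the prefix of $y$ and using the two constraints at each letter.

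\emph{Main obstacle.} I expect the real difficulty to be this last verification. If the naive $Y$ of zigzag classes is too coarse to make the cocycle consistent, I would enlarge $Y$ to pairs (class, orientation data), or to the finite set of classes generated by $f$ and inversion, until the return-time condition closes up. Nonconstancy of $\phi$ would still come from $m\ge2$ together with irreducibility of $P$, since both classes are then visited by $\mu$-typical words.
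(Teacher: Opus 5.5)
Your easy implications are the paper's. For \labelcref{item:wm}$\imp$\labelcref{item:strictly-irreducible} your route is genuinely different, and it works with the naive $Y$ of zigzag classes; no enlargement is needed. However, the consistency and cocycle checks you deferred are the whole content of the step, so here they are.

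Once $P$ is irreducible (which forces $r\ge2$), each $a\in\Symb$ has an in-neighbour $c\ne a$. Also $c\ne a^{-1}$, because $P(a^{-1},a)=0$. Weak inverse-symmetry then gives $P(c,a^{-1})>0$, so $[a]=[a^{-1}]$. Similarly, $a$ has an out-neighbour $b\ne a^{\pm1}$, and weak inverse-symmetry gives $P(a^{-1},b)>0$, so $f(a)=f(a^{-1})$. Both of your constraints therefore say that $\sigma_{a_i}$ swaps $[a_i]$ and $f(a_i)$. Take $\sigma_{a_i}$ to be that transposition, or the identity if the two classes coincide.

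Iterating $\sigma_s(f(s))=[s]$ along a legal word $w_0\cdots w_n$ gives $\sigma_{w_0}\cdots\sigma_{w_{n-1}}[w_n]=[w_0]$. Now let $y,\gamma y\in L$, and write $\gamma=u\,(y_0\cdots y_{n-1})^{-1}$ with $\gamma y=u\,y_ny_{n+1}\cdots$ reduced, so that $u\,y_n$ is legal. Applying the identity to $y_0\cdots y_n$ and to $u\,y_n$ gives $\sigma_\gamma[y_0]=[(\gamma y)_0]$. Hence $\phi$ is well defined on $\F_rL$.

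Since $\pi>0$ and $m\ge2$, $\phi$ takes at least two values on sets of positive $\mu$-measure. Let $Y_1$ be an orbit of $\langle\sigma_{a_1},\dots,\sigma_{a_r}\rangle$ with $\mu(\phi^{-1}Y_1)>0$, and let $\nu$ be uniform on $Y_1$. The invariant Borel set $\set{(x,\phi(x)):x\in\F_rL,\ \phi(x)\in Y_1}$ then has $\mu\times\nu$-measure $\mu(\phi^{-1}Y_1)/|Y_1|\in(0,1)$, contradicting weak mixing.

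Your parenthetical unitary-ergodicity variant needs a correction. Glasner--Weiss applies to mcp actions, so that version must be run for $\tilde\mu$, on which $\F_rL$ is conull, and then transferred back via \cref{WM_nonsingularization}. The direct diagonal argument avoids this.

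By contrast, the paper tests weak mixing against one fixed action that does not depend on $P$: translation on $\Z_2^V\cong\Z_2^r$ through the mod-$2$ abelianization $\iota$. It shows that ergodicity of the diagonal action forces $\Cyc=\Z_2^V$. It then needs the Eulerian parity argument of \cref{main_lemma} to turn $[a]_R\in\Cyc$ into $[a]_R$ being $\sim$ to all its out-neighbours.

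Your argument is contrapositive and uses a test action tailored to $P$. The group generated by the $\sigma_{a_i}$ can be nonabelian, e.g.\ $S_3$ when the blocks $\set{a_i^{\pm1}}$ form a directed $3$-cycle, so it need not factor through $\iota$. Together with the explicit factor map $x\mapsto[x_0]$, this bypasses the cycle-space combinatorics entirely and is much shorter. What the paper's route buys is the sharper \cref{wm_implies_strict_irreducible}: ergodicity of the diagonal product with the single $P$-independent action on $\Z_2^V$ already forces strict irreducibility.
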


\begin{proof}
The implication $\labelcref{item:strictly-irreducible} \imp
\labelcref{item:(S-1)-chaning}$ is immediate from \cref{n_dance_n_chaining}, strict irreducibility, and $|S|$.
$\labelcref{item:(S-1)-chaning}
\imp \labelcref{item:ess(S-1)-chaning}$ is due to \cref{Ess-bounded_chaining_for_nonsingularization}. 
To see $\labelcref{item:ess(S-1)-chaning} \imp \labelcref{item:wm}$, note that by $\labelcref{item:ess(S-1)-chaning}$, the boundary action is weakly mixing with respect to $\tilde{\mu}$ (this is by \cref{Ess-bounded_chaining_for_nonsingularization}, \cref{essential_bounded_chaining_implies_ME}, and \cite[Theorem 1.1-1.2]{Glasner-Weiss:weak_mixing}).
Then by \cref{WM_nonsingularization}, the boundary action is weakly mixing with respect to $\mu$.
$\labelcref{item:(S-1)-chaning} \imp \labelcref{item:bounded_chaning}$ is trivial, 
\labelcref{item:bounded_chaning} $\imp$ \labelcref{item:me} is by \cref{essential_bounded_chaining_implies_ME} and \cref{Ess-bounded_chaining_for_nonsingularization}, 
and $\labelcref{item:me} \imp \labelcref{item:wm}$ is by \cite[Theorem 1.1-1.2]{Glasner-Weiss:weak_mixing} and \cref{WM_nonsingularization}.

\labelcref{item:wm} $\imp$ \labelcref{item:strictly-irreducible} follows from \cref{wm_implies_strict_irreducible} below, since the action $\F_r \actson^\alpha (\Z_2^V, \nu)$ is ergodic and pmp.
Note that this is the only use of the weak-inverse symmetry hypothesis on $P$.
\end{proof}

\begin{question}
    Which conditions in \cref{wm_boundary} are equivalent without the weak inverse-symmetry assumption?
\end{question}

\subsubsection{Weak mixing implies strict irreducibility}\label{subsubsec:wm_imp_strict-irred}

Assume \cref{hyp} and in addition that $P$ is weakly inverse-symmetric.
We now define a specific ergodic pmp action $\F_r \actson^\alpha (Z,\nu)$ and show that the ergodicity of the diagonal action $\F_r \actson (\partial \F_r \times Z, \mu \times \nu)$ implies that the transition matrix $P$ is strictly irreducible.

Given a directed graph $G = (V, E)$, we denote by $\Z_2^V$ the group of formal sums of elements of $V$ with addition mod 2.
We call an element $g \in \Z_2^V$ a \dfn{$G$-cycle} if there is a directed path $v_0, v_1, \dots, v_n$ in $G$ with $v_0 = v_n$ and $g = v_1 + v_2 + \dots + v_n$.
We denote by $\Cyc$ the subgroup of $\Z_2^V$ generated by the $G$-cycles.

We now define a specific graph $G$ as follows.
Let $\Symb^+ \defeq \set{a_1,a_2,\dots,a_r}$, \ie the set of \emph{positive} free generators of $\F_r$.
Let $G = (V , E)$ be the quotient graph of $G_P$ where we identify a generator with its inverse, namely:
the vertex set is $V = \set{[a]_R: a \in \Symb^+}$, where $[a]_R = \set{a,a^{-1}}$, and $([a]_R,[b]_R) \in E$ exactly when there is an edge in $G_P$ of the form $(a^i, b^j)$ for some $i,j \in \set{\pm 1}$.

Let $\iota : \F_r \to \Z_2^V$ be the unique group homomorphism induced by mapping each $a \in \Symb$ to $[a]_R \in V \subseteq \Z_2^V$, that is, $\iota(w) \defeq \sum_{i=1}^n [a_i]_R$ for a word $w = a_1 \dots a_n \in \F_r$.

We equip $Z \defeq \Z_2^V$ with the uniform probability measure $\nu$ and define the action $\F_r \actson^\alpha \Z_2^V$ by $\gamma \cdot^\alpha v \defeq \iota(\gamma) + v$, so $\alpha$ is ergodic (transitive) and pmp.
For the rest of the section, assume the diagonal action $\F_r \actson^{\beta \times \alpha} (\partial \F_r \times \Z_2^V, \mu \times \nu)$ is ergodic, and hence $P$ is irreducible.

Recall that a directed graph is \dfn{strongly connected} if for any two vertices $v$ and $w$, there is a directed path from $v$ to $w$.
Observe that since $P$ is irreducible, $G$ is strongly connected.

\begin{lemma}\label{lem: WM implies Cyc = everything}
$\Cyc=\Z_2^V$.
\end{lemma}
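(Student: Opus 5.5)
The plan is to argue by contradiction: if $\Cyc$ is a proper subgroup of $\Z_2^V$, I will build a nonconstant $\beta\times\alpha$-invariant function on $\partial\F_r \times \Z_2^V$ with values in the quotient group $\Z_2^V/\Cyc$. This contradicts the standing assumption that the diagonal action is ergodic. The construction is a "cohomological" normalization of the $\Z_2^V$-coordinate along a $\mu$-legal word, made well defined using cycles of $G$.

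\emph{Normalizing paths.} Fix a base vertex $o \in V$. Since $P$ is irreducible, $G$ is strongly connected, so for each $t \in V$ I fix a directed path $q_t$ in $G$ from $o$ to $t$. Let $c_t \in \Z_2^V$ be the sum of the vertices of $q_t$ after $o$ (including $t$), with $c_o \defeq 0$. For $x \in L$ and $n \in \N$, put
\[
f_n(x,v) \defeq v + \iota(x_0x_1\cdots x_{n-1}) + c_{[x_n]_R} \pmod{\Cyc}.
\]

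\emph{Independence of $n$.} I claim $f_{n+1}(x,v) - f_n(x,v) = [x_n]_R + c_{[x_n]_R} + c_{[x_{n+1}]_R}$ lies in $\Cyc$. Since $x$ is legal, $([x_n]_R,[x_{n+1}]_R)$ is an edge of $G$. Choose any directed path $p$ in $G$ from $[x_{n+1}]_R$ back to $o$, with vertex sum $d$ (excluding the start). Then $q_{[x_n]_R}$ followed by the edge to $[x_{n+1}]_R$ and then $p$ is a closed walk, with sum $c_{[x_n]_R} + [x_{n+1}]_R + d \in \Cyc$. Also $q_{[x_{n+1}]_R}$ followed by $p$ is a closed walk, with sum $c_{[x_{n+1}]_R} + d \in \Cyc$. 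Adding these two gives $c_{[x_n]_R} + c_{[x_{n+1}]_R} + [x_{n+1}]_R \in \Cyc$. The indexing in $f_n$ must be set up so that exactly this combination appears. Concretely, I will shift $\iota(x_0\cdots x_{n-1})$ versus $\iota(x_0\cdots x_n)$ and adjust which endpoint $c$ is evaluated at so the difference is the sum of these two cycles; I expect this to be a routine reindexing. Hence $f \defeq f_n$ is a well-defined Borel map $L \times \Z_2^V \to \Z_2^V/\Cyc$.

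\emph{Invariance and the contradiction.} Take $\gamma \in \F_r$ and $x \in L$ with $\gamma x \in L$. After cancellation, write $x = u'z$ and $\gamma x = uz$ with a common tail $z$, so that $\gamma = u u'^{-1}$ and $\iota(\gamma) = \iota(u) + \iota(u')$. Evaluate $f(\gamma x, \iota(\gamma)+v)$ at $n = |u|$ and $f(x,v)$ at $n = |u'|$. Both equal $v + \iota(u') + c_{[z_0]_R}$, using that everything is mod $2$. So $f$ is invariant on $L \times \Z_2^V$. Set $E \defeq \F_r\cdot f^{-1}(0)$, which is invariant. Because $L$ is $\mu$-conull and the computation above shows $E \cap (L\times \Z_2^V) = f^{-1}(0)$, the set $E$ agrees with $f^{-1}(0)$ up to a $\mu\times\nu$-null set. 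For each fixed $x \in L$, the map $v \mapsto f(x,v)$ is a translate of the quotient map, so $f^{-1}(0)$ has $\nu$-measure $1/[\Z_2^V:\Cyc]$ in each fiber. This lies strictly between $0$ and $1$ if $\Cyc \neq \Z_2^V$, contradicting ergodicity.

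The main obstacle I anticipate is measurability and the singular nature of $\mu$. I need to check that $E$ is $\mu\times\nu$-measurable and has the stated measure even though $\beta$ is not mcp. Since $\nu$ is counting-uniform and $L$ is Borel and conull, I expect this to be handled by working with Borel sets throughout and using the equality $E\cap(L\times\Z_2^V)=f^{-1}(0)$.
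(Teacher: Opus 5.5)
Your argument is correct once the off-by-one you flagged is repaired, and it takes a somewhat different route from the paper.

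\textbf{The indexing fix.} With $c$ evaluated at $x_n$ you get $f_{n+1}-f_n=[x_n]_R+c_{[x_n]_R}+c_{[x_{n+1}]_R}\equiv [x_n]_R+[x_{n+1}]_R$ modulo $\Cyc$, which need not vanish. Your invariance computation at $n=|u|$ and $n=|u'|$ silently relies on $n$-independence, so the repair is needed there too. The fix is to evaluate $c$ at the last letter of the prefix: for $n\ge 1$ set $f_n(x,v)=v+\iota(x_0\cdots x_{n-1})+c_{[x_{n-1}]_R}$. Then $f_{n+1}-f_n=c_{[x_{n-1}]_R}+[x_n]_R+c_{[x_n]_R}$, which is exactly the sum of your two closed walks for the edge $([x_{n-1}]_R,[x_n]_R)$. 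Evaluating at $n=|u|+1$ and at $n=|u'|+1$ gives $v+\iota(u')+[z_0]_R+c_{[z_0]_R}$ on both sides.

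\textbf{Measurability.} The obstacle you anticipated does not arise. The map $f=f_1$ depends only on $(x_0,v)$, so $f^{-1}(0)$ is Borel and $E$ is a countable union of Borel sets. Its measure is read off from $E\cap(L\times\Z_2^V)=f^{-1}(0)$ and $\mu(L)=1$, without ever using nonsingularity of $\bdryaction$.

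\textbf{Comparison with the paper.} The paper saturates the single set $[a]\times\Cyc$. It uses recurrence of the state $a$ (a.e.\ $x$ returns to $a$ infinitely often) to turn the cancelled prefix into a closed walk from $a$ to $a$. It then concludes that, up to null sets, the saturation meets $[a]\times\Z_2^V$ only in $[a]\times\Cyc$.

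You replace recurrence by strong connectivity of $G$ plus a choice of reference paths $q_t$ from a base vertex $o$, i.e.\ a potential that closes every step through $o$. The key fact is the same in both: vertex sums along closed walks of $G$ lie in $\Cyc$.

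Your version yields an explicit invariant Borel map $L\times\Z_2^V\to\Z_2^V/\Cyc$. Invariance holds pointwise whenever $x,\gamma x\in L$, and the level sets give $[\Z_2^V:\Cyc]$ invariant sets of equal measure. The paper's version is shorter and needs no auxiliary choices, at the price of describing the saturation only modulo null sets.
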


\begin{proof}
Consider the diagonal action $\F_r \actson^{\bdryaction\times \alpha} \partial \F_r \times \Z_2^V$. 
Fix $a \in \Symb$ and, so $a \in \Rec$ since $P$ is irreducible. 
Let
\[
A \defeq \set{(x, w): x \in [a] \text{ and } w \in \Cyc}.
\]

Consider the $\bdryaction\times\alpha$-saturation of $A$, which is necessarily invariant under the action. 
Letting $\dom(\gamma) \defeq \set{x\in \Symb^\N \ : \ \mu[\gamma x_0]>0}$ for each $\gamma \in \F_r$, we make the following assertion.

\begin{claim*}
$[A]_{\bdryaction \times \alpha} =_{\mu \times \nu} \set{(\delta \cdot y, \delta \cdot v) : \delta \in \F_r,\; y \in [a] \cap \dom(\delta), v \in \Cyc}.$
\end{claim*}

\begin{pf}
First, note that the points in the $\bdryaction\times\alpha$-saturation of $A$ are exactly those of the form $(\gamma \cdot (x \rest{n})^{-1} \cdot x, \gamma \cdot (x \rest{n})^{-1} \cdot w)$, where $\gamma \in \F_r$, $x \in [a]$, $n \in \N$, and $w \in \Cyc$.
Discarding a null set, we may also assume that $\sigma^n(x)\in \dom(\gamma)$, $\gamma \cdot \sigma^n(x)$ is a legal word, and that $x_m = a$ for infinitely many $m \in \N$ (since $a \in \Rec$).
Note that for every such $m$, the vertices in $\sigma^m(x)$ form a $G$-cycle, so $v \defeq (x \rest{m})^{-1} \cdot w = \iota(x\rest{m}) + w \in \Cyc$.
Hence, by taking such an $m$ larger than $n$ above and setting $y \defeq \sigma^m(x)$ and $\delta \defeq \gamma \cdot (x \rest{[n,m)})$, we may assume that our points are of the form $(\delta \cdot y, \delta \cdot v)$, where $y \in [a] \cap \dom(\delta)$ and $v \in \Cyc$.
\end{pf}

Thus, up to measure zero, every pair of the form $(x,w)\in [A]_{\bdryaction\times \alpha}$ with $x \in [a]$ is such that $w\in \Cyc$. 
Therefore, since $\bdryaction\times \alpha$ is ergodic and $[A]_{\bdryaction\times \alpha}$ is invariant, we must have $\Cyc = \Z_2^V$.
\end{proof}

For $[a]_R,[b]_R \in V$, put $[a]_R \sim [b]_R$ if $a \reach b$.

\begin{lemma}
The relation $\sim$ on $V$ is a well-defined equivalence relation.
\end{lemma}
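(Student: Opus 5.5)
Two things need checking: that $[a]_R \sim [b]_R$ does not depend on the chosen representatives of the classes, and that $\sim$ is reflexive, symmetric and transitive. Write $a \reach b$ for reachability with respect to $P^tP$, that is, the existence of an in-and-out zigzag trail in $G_P$ from $a$ to $b$. Note that $P^tP$ is a symmetric matrix, so $\reach$ is automatically symmetric and transitive. Reflexivity also holds: every state $a$ has an in-neighbour $c$ with $P(c,a) > 0$. Indeed, $P$ is irreducible (we are in the setting where the diagonal action is ergodic), or one can use the positive stationary distribution directly. The zigzag $a, c, a$ then gives $(P^tP)(a,a) > 0$.

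Hence $\reach$ is an equivalence relation on $\Symb$, and the only real content is well-definedness. For that it suffices to show $a \reach a^{-1}$ for every $a \in \Symb$. Then any two representatives of $[a]_R$ are $\reach$-equivalent, $\sim$ is simply the quotient of the equivalence relation $\reach$ by the coarser identification $a \equiv a^{-1}$, and so it is itself a well-defined equivalence relation.

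The main step is therefore to produce an in-and-out zigzag from $a$ to $a^{-1}$, and the plan uses weak inverse-symmetry. Pick $c$ with $P(c,a) > 0$, which exists as above. Since $P(c,c^{-1}) = 0$, we have $c \ne a^{-1}$.
\begin{itemize}
\item If $c \ne a$, so $c \neq a^{\pm1}$, then weak inverse-symmetry (\cref{symmetry}) applied to $P(c,a) \neq 0$ gives $P(c,a^{-1}) > 0$. The zigzag $a, c, a^{-1}$ shows $a \reach a^{-1}$.
\item If the only in-neighbour of $a$ is $a$ itself, then $r \ge 2$ is needed: pick an in-neighbour of $a$ different from $a$ if one exists. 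Irreducibility with $|\Symb| \ge 2$ forces $a$ to have an in-neighbour $c \neq a$, because otherwise no walk from another state could ever enter $a$.
\item When $r = 1$ there is an honest edge case. Here $\Symb = \{a,a^{-1}\}$ and $V$ is a single vertex, so $\sim$ is trivially a well-defined equivalence relation and nothing needs to be checked.
\end{itemize}

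The expected obstacle is exactly this case analysis: guaranteeing an in-neighbour $c \notin \{a, a^{-1}\}$ so that the symmetry hypothesis applies. The plan handles it by splitting off $r = 1$ and using irreducibility of $P$ when $|\Symb| \ge 3$ (and hence $\ge 4$).
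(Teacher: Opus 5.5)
Your proposal is correct and follows the paper's argument. Symmetry and transitivity of $\reach$ are automatic, reflexivity comes from an in-neighbour of $a$, and well-definedness comes from an in-neighbour $c \notin \{a,a^{-1}\}$ together with weak inverse-symmetry, which yields the zigzag $a, c, a^{-1}$; you also justify why such a $c$ exists, which the paper only asserts.

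One small correction concerns your $r=1$ case. It is not ``trivially'' fine: well-definedness on the single vertex still requires $a \reach a^{-1}$, and this fails for the only possible $P$ there, namely the identity. The case is instead vacuous, because $P$ cannot be irreducible when $r=1$, which is exactly what your own second bullet shows.
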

\begin{proof}
By the symmetry and transitivity of the relation $\reach$ on $\Symb$, it is enough to show that $\sim$ is well-defined and reflexive.
To show well-definedness, suppose that $a \reach b$ for some $a,b \in \Symb$. Note that irreducibility implies that $P(c,a) > 0$ for some $c \in \Symb$ not equal to $a$ or $a^{-1}$. 
Then our symmetry condition \labelcref{symmetry} yields that $P(c, a^{-1}) > 0$ so $a^{-1} \reach a$ and hence $a^{-1} \reach b$.
Similarly, we get $a \reach b^{-1}$ by the symmetry of $\reach$.

The reflexivity of $\sim$ also follows from the irreducibility of $P$ since for each $a \in \Symb$ we have $P(c,a) > 0$ for some $c \in \Symb$.
\end{proof}

\begin{obs}\label{PtP-relation}
Let $G = (V,E)$ be as defined above.
\begin{enumerate}[(a),leftmargin=*]

\item\label{out-neighbors_PtP-reachable} All out-neighbors of a vertex are in the same $\sim$ equivalence class.

\item \label{irreducible_iff_outneighbors} 
$P^tP$ is irreducible if and only if every vertex in $V$ is $\sim$ to all of its out neighbors.
\end{enumerate}
\end{obs}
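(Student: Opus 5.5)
Both parts reduce to unpacking the definitions: an edge $([c]_R,[a]_R)$ of $G$ lifts to an edge of $G_P$, and $\sim$ is defined through the zigzag description of $\reach$. We also use the standing assumption that $P$ is irreducible, together with weak inverse-symmetry (\cref{symmetry}) and the well-definedness lemma just proved.

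For \labelcref{out-neighbors_PtP-reachable}, fix a vertex $[c]_R$ and two out-neighbors $[a]_R,[b]_R$. By definition of $E$ there are signs with $P(c^{i},a^{j})>0$ and $P(c^{i'},b^{j'})>0$. If $i=i'$, then $a^j$ and $b^{j'}$ are two out-neighbors of the same state $c^i$ in $G_P$. This is a one-step in-and-out zigzag, so $a^j \reach b^{j'}$, and by well-definedness $[a]_R\sim[b]_R$.

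If $i\neq i'$, we first equalize the signs with weak inverse-symmetry. When $b^{j'}\neq (c^{i'})^{\pm1}$, i.e.\ $b\ne c^{\pm1}$, the relation $P(c^{i'},b^{j'})>0$ gives $P(c^{i},b^{j''})>0$ for every sign $j''$, and we are back in the case $i=i'$. The exceptional case $b=c^{\pm1}$ is handled by the same argument with the roles of $a$ and $b$ swapped: we equalize signs using the edge to $a$, which is available unless also $a=c^{\pm1}$. In that last case $[a]_R=[b]_R=[c]_R$ and reflexivity of $\sim$ finishes the argument.

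For \labelcref{irreducible_iff_outneighbors}, start with $(\Leftarrow)$. Suppose every vertex is $\sim$ to each of its out-neighbors. Since $G$ is strongly connected, any two vertices are joined by a directed path, so transitivity gives that $\sim$ has a single class. Hence $a\reach b$ for all $a,b\in\Symb$; here $a$ and $b^{-1}$ are covered by well-definedness. So $P^tP$ is irreducible.

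For $(\Rightarrow)$, irreducibility of $P^tP$ gives $a\reach b$ for all $a,b$. Thus $\sim$ is the total relation on $V$, and in particular every vertex is $\sim$ to its out-neighbors.

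The only delicate point is the sign bookkeeping in \labelcref{out-neighbors_PtP-reachable}: weak inverse-symmetry only applies to pairs with $b\ne a^{\pm1}$, so loops $[c]_R\to[c]_R$ in $G$ need the separate treatment given above. Everything else is a direct unwinding of the definitions.
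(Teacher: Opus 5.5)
Your proof is correct. The paper states this as an observation without proof, and your argument is the routine unpacking of definitions it leaves to the reader, including the one point that needs care: using weak inverse-symmetry to place both out-edges of $[c]_R$ on a common in-neighbor in $G_P$, with loops $[c]_R\to[c]_R$ handled separately via reflexivity of $\sim$.
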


\begin{lemma}\label{main_lemma}
If a vertex $[a]_R \in V$ is in $\Cyc$, then $[a]_R \sim$ to all of its out-neighbours.
\end{lemma}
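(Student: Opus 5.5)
We prove the contrapositive: if $[a]_R$ is \emph{not} $\sim$-equivalent to its out-neighbours, we build a group homomorphism $f:\Z_2^V \to \Z_2$ that vanishes on every $G$-cycle but satisfies $f([a]_R)=1$. Then $f$ vanishes on $\Cyc$, so $[a]_R\notin\Cyc$.

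First we set up a ``class multigraph.'' Since $P$ is irreducible, $G$ is strongly connected, so every vertex $u\in V$ has at least one out-neighbour. By \cref{PtP-relation}\labelcref{out-neighbors_PtP-reachable}, all out-neighbours of $u$ lie in a single $\sim$-class, which we call $N(u)$. Write $c(u)$ for the $\sim$-class of $u$. Let $H$ be the directed multigraph whose vertices are the $\sim$-classes and which has one edge $e_u$ from $c(u)$ to $N(u)$ for each $u\in V$. Under this dictionary, an element $g\in\Z_2^V$ is a set of edges of $H$ taken mod $2$.

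Next we show that a $G$-cycle becomes an even-degree edge set in $H$. Take a directed closed path $v_0,v_1,\dots,v_n=v_0$ in $G$. For each $i\ge1$, the vertex $v_i$ is an out-neighbour of $v_{i-1}$, so $c(v_i)=N(v_{i-1})$. Hence the edges $e_{v_0}, e_{v_1},\dots,e_{v_{n-1}}$ form a closed walk $c(v_0)\to c(v_1)\to\dots\to c(v_n)=c(v_0)$ in $H$. The associated cycle is $g=v_1+\dots+v_n$, and since $v_n=v_0$ this equals $v_0+\dots+v_{n-1}$, which is exactly the edge multiset of this closed walk reduced mod $2$. In a closed walk every vertex of $H$ has even total degree (in-degree plus out-degree, loops counted twice). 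Passing to mod $2$ cancels edges in pairs, which preserves the parity of every degree.

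Now define the homomorphism. Let $C\defeq c([a]_R)$, and for $u\in V$ set $f(u)=1$ exactly when precisely one of $c(u)$ and $N(u)$ equals $C$, i.e.\ when $e_u$ is a non-loop edge incident to $C$. Extend $f$ linearly to $\Z_2^V$. Then $f(g)$ is the parity of the degree of $C$ in the edge set $g$, where loops at $C$ contribute $2\equiv 0$. By the previous step, $f$ vanishes on every $G$-cycle and hence on $\Cyc$. By assumption $N([a]_R)\neq C=c([a]_R)$, so $f([a]_R)=1$ and $[a]_R\notin\Cyc$.

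The main point needing care is the translation of $G$-cycles into closed walks in $H$. The cycle sum is indexed $v_1,\dots,v_n$ while the edges are indexed by their tails $v_0,\dots,v_{n-1}$; these agree because $v_0=v_n$. We also need $N(u)$ to be well defined for every $u$, and this is exactly what \cref{PtP-relation}\labelcref{out-neighbors_PtP-reachable} together with strong connectivity provide.
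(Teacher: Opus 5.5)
Your argument is correct, and it takes a genuinely different route from the paper.

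\textbf{The paper's route.} The paper argues directly. It writes $[a]_R$ as a sum of $G$-cycles and reduces the union of these cycles mod $2$ to a subgraph $H\subseteq G$. In $H$, $[a]_R$ is the only vertex of odd in- and out-degree. It then passes to the maximal $[a]_R$-zigzagged subgraph $H_{[a]_R}$. Maximality guarantees that $H_{[a]_R}$ keeps every out-edge of its ``out'' vertices and every in-edge of its ``in'' vertices. A handshake count then applies: the number of edges equals both the sum of out-degrees and the sum of in-degrees, and it is odd. This forces $[a]_R$ into $I(H_{[a]_R})$, which lies in one $\sim$-class together with the out-neighbours of $[a]_R$.

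\textbf{Your route.} You argue by duality instead. You collapse $G$ onto the $\sim$-classes, with one edge $c(u)\to N(u)$ per vertex $u$; this is well defined by \cref{PtP-relation}\labelcref{out-neighbors_PtP-reachable}. Under this collapse, $G$-cycles become closed walks, because $N(v_{i-1})=c(v_i)$ and the index shift is harmless since $v_0=v_n$. The cut character of the single class $C=c([a]_R)$ therefore kills $\Cyc$ but takes the value $1$ on $[a]_R$ whenever $N([a]_R)\neq C$.

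\textbf{Comparison.} Your version is shorter and avoids the zigzag/maximality bookkeeping. That includes the care needed after the mod-$2$ reduction, where only the parities of in- and out-degrees survive. It also exhibits an explicit homomorphism $\Z_2^V\to\Z_2$ vanishing on $\Cyc$, which makes clear that the obstruction is a single parity at one class. The paper's route, in exchange, is constructive in the forward direction. From a cycle decomposition of $[a]_R$ it extracts an explicit trail $[a]_R\to b_1\leftarrow a_1\to\cdots\to [a]_R$ in $G$. This trail directly witnesses that $[a]_R$ is $\sim$ to its out-neighbour $b_1$.
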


\begin{proof}
Let $[a]_R = \sum_{j<k} C_j$, where each $C_j \in \Cyc$ and let $\tilde{H}$ be the directed multigraph (with possibly repeated directed edges) on the vertex set $V$ formed by the union the $C_j$ viewed as $G$-cycles in the Markov graph $G$. 
In $\tilde{H}$, each vertex has out-degree $=$ in-degree because this is true of each $C_j$. 
Moreover, because in $\sum_{j<k} C_j$ all labels other than $[a]_R$ cancel and $[a]_R$ remains, every vertex other than $[a]_R$ has even out-degree (hence also even in-degree) in $\tilde{H}$ while $[a]_R$ has odd out-degree (hence also odd in-degree) in $\tilde{H}$. 

Let $H$ be the graph obtained from $\tilde{H}$ by changing the multiplicity $m_e$ of each directed edge $e$ to $m_e \mod 2$, so $H$ is a subgraph of $G$ in which the parity of out and in degrees of each vertex are the same as in $\tilde{H}$.
Thus, in the graph $H$, each vertex $v \in \Symb$ has equal in and out degrees, and its out-degree is odd exactly when $v = [a]_R$.

For any directed graph $K$, in addition to the standard notations of the vertex set $V(K)$ and edge set $E(K)$, we also denote by $O(K)$ and $I(K)$ the sets of vertices of $K$ with positive out and in degrees, respectively.
Call a subgraph $K$ of $H$ \emph{$[a]_R$-zigzagged} if it is connected as an undirected graph, $[a]_R \in V(K)$, and each vertex $v \in V(K)$ satisfies the following:

\begin{enumerate}[(i),leftmargin=*]
\item\label{zigzag-out} if $v \in O(K)$, then there are $n \ge 0$ and a trail in $K$ of the form
\[
[a]_R = a_0 \rightarrow b_1 \leftarrow a_1 \rightarrow \dots \leftarrow a_{n-1} \rightarrow b_n \leftarrow a_n = v,
\]

\item\label{zigzag-in} and if $v \in I(K)$, then there are $n \ge 0$ and a trail in $K$ of the form 
\[
[a]_R = a_0 \rightarrow b_1 \leftarrow a_1 \rightarrow \dots \leftarrow a_n \rightarrow b_{n+1} = v,
\]
\end{enumerate}
where the arrows signify directed edges of $K$.
For such a graph $K$, observe that:

\begin{enumerate}[(a),leftmargin=*]
\item \label{a_in_O(K)} $[a]_R \in O(K)$ unless $E(K) = \emptyset$;

\item \label{B_interreachable} $I(K)$ is contained in one $\sim$ equivalence class.
\end{enumerate}

Because $[a]_R$-zigzagged subgraphs of $H$ are closed under unions, the union of all $[a]_R$-zigzagged subgraphs of $H$ is the unique maximal $[a]_R$-zigzagged subgraph, and we denote it by $H_{[a]_R}$.
By \labelcref{B_interreachable}, to show that $[a]_R \sim$ to all its out-neighbors, it is enough to prove that $[a]_R \in I(H_{[a]_R})$.

It follows from maximality that in $H_{[a]_R}$, the out-degrees of vertices in $O(H_{[a]_R})$ and the in-degrees of vertices in $I(H_{[a]_R})$ coincide with those in $H$.
Moreover, maximality also implies that the out-degree of $[a]_R$ in $H_{[a]_R}$ coincides with that in $H$, and since it is odd in $H$, we have $[a]_R \in O(H_{[a]_R})$.
This and the fact that $[a]_R$ is the only vertex in $H$ with odd out-degree yields that the sum of out-degrees of all vertices in $H_{[a]_R}$ is odd, and thus same is true for in-degrees.
But $[a]_R$ is the only vertex with odd in-degree in $H$, so it must be that $[a]_R \in I(H_{[a]_R})$.
\end{proof}

Thus, \cref{PtP-relation}\labelcref{irreducible_iff_outneighbors}, and \cref{main_lemma,lem: WM implies Cyc = everything} together yield the desired result:

\begin{prop}\label{wm_implies_strict_irreducible}
Assume \cref{hyp}. 
Suppose in addition that $P$ is weakly inverse-symmetric.
Let $\F_r \actson^\alpha (\Z_2^V, \nu)$ be the ergodic pmp action defined above.
If the diagonal action $\F_r \actson^{\beta \times \alpha}(\partial \F_r \times \Z_2^V, \mu \times \nu)$ is ergodic then $P$ is strictly irreducible.
\end{prop}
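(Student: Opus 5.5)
The plan is to assemble the preceding lemmas; the only genuinely new steps are showing that $P$ is irreducible and checking \cref{PtP-relation}\labelcref{irreducible_iff_outneighbors}.

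\textbf{Step 1: $P$ is irreducible.} The diagonal action $\beta\times\alpha$ is ergodic by hypothesis. Its projection to $\partial\F_r$ is $\beta$, and any $\beta$-invariant set $D$ lifts to the $\beta\times\alpha$-invariant set $D\times\Z_2^V$. Hence $\beta$ is $\mu$-ergodic. Ergodicity is a property of the orbit equivalence relation, and $\beta$ has the same orbit equivalence relation as the shift. So the shift is $\mu$-ergodic, and \cref{ergodic<=>irreducible} gives that $P$ is irreducible. In particular $G$ is strongly connected, and the standing assumptions of the subsection hold.

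\textbf{Step 2: every vertex is $\sim$ to its out-neighbours.} By \cref{lem: WM implies Cyc = everything}, $\Cyc=\Z_2^V$. In particular every vertex $[a]_R\in V$ lies in $\Cyc$. \cref{main_lemma} then gives that $[a]_R\sim$ each of its out-neighbours in $G$.

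\textbf{Step 3: $P^tP$ is irreducible.} I would verify \cref{PtP-relation}\labelcref{irreducible_iff_outneighbors} concretely.

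First take $a,b\in\Symb$ with $P(a,b)>0$. Irreducibility of $P$ yields some $c$ with $P(c,a)>0$, and $c$ can be chosen with $c\neq a^{\pm1}$ except in degenerate small cases, which I would check separately.

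The key observation is that $P(c,a)>0$ and $P(c,a')>0$ give $a\reach a'$ through the one-step zigzag $a\leftarrow c\rightarrow a'$. So all out-neighbours of a common vertex are $\reach$-related, which is \labelcref{out-neighbors_PtP-reachable}. The well-definedness lemma for $\sim$ already takes care of passing between $a$ and $a^{-1}$ using weak inverse-symmetry.

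Now let $x,y\in\Symb$. Strong connectivity of $G$ gives a directed path $[x]_R=v_0\to v_1\to\dots\to v_m=[y]_R$. By Step 2, consecutive vertices satisfy $v_i\sim v_{i+1}$, so $[x]_R\sim[y]_R$ by transitivity, that is, $x\reach y$. Hence $P^tP$ is irreducible.

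\textbf{Step 4: conclusion.} Combining Step 1 with Step 3 shows that $P$ is strictly irreducible. Alternatively, \cref{obs:irreducible} together with \cref{obs:PTP} yields the same conclusion from Step 3 alone.

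The main obstacle is Step 3, specifically the lifting between edges of the quotient graph $G$ and edges of $G_P$. An edge $([a]_R,[b]_R)\in E$ only records an edge of the form $(a^i,b^j)$ in $G_P$, and $b=a^{\pm1}$ is excluded from the weak inverse-symmetry hypothesis. I would handle this by first applying weak inverse-symmetry to $b\neq a^{\pm1}$, and then treating the edge $(a,a)$ separately: reflexivity of $\sim$ covers it.
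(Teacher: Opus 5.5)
Your proposal is correct and follows the paper's route: ergodicity of the diagonal action gives irreducibility of $P$. Then \cref{lem: WM implies Cyc = everything} and \cref{main_lemma} show that every vertex of $G$ is $\sim$ to its out-neighbours, and \cref{PtP-relation}\labelcref{irreducible_iff_outneighbors} together with \cref{obs:PTP} gives strict irreducibility; you correctly verify the direction of \cref{PtP-relation}\labelcref{irreducible_iff_outneighbors} that is needed, via strong connectivity of $G$ and transitivity of $\sim$. The first paragraph of your Step 3 and the ``main obstacle'' about lifting edges of $G$ to $G_P$ are unnecessary, since your final argument uses only edges of $G$ and the already well-defined relation $\sim$ on $V$.
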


\subsection{Counterexamples involving boundary actions}\label{sec:counterexamples}

Here we will show that in the mcp setting, essential $1$-chaining does not imply bounded chaining (\cref{ess_1C_not_BC}), and essential $k+1$-chaining does not imply essential $k$-chaining (\cref{strict_essBC_hierarchy}), \ie the essential bounded chaining hierarchy is strict:
\begin{align*}
\text{1-ess-C}
\mathop{\substack{
\imp
\\
\red{\nLeftarrow}
}}
\text{2-ess-C}
\mathop{\substack{
\imp
\\
\red{\nLeftarrow}
}}
\text{3-ess-C}
\mathop{\substack{
\imp
\\
\red{\nLeftarrow}
}}
\dots
\mathop{\substack{
\imp
\\
\red{\nLeftarrow}
}}
\text{ess-BC}.
\end{align*}

\noindent We also give similar examples of singular actions that are $k+1$-chaining but not $k$-chaining.
It remains open whether such examples exist in the mcp setting, see \cref{q:bbd-C_implies_1-C}.
We note that all of our counterexamples in this section are for free group actions. 
Thus, it is still possible that these implications hold for amenable groups, see \cref{q:amenable-WM_implies_bdd-C}.

\begin{prop}\label{ess_1C_not_BC}
There is an mcp action of $\F_2$ that is essentially 1-chaining but not boundedly chaining.
\end{prop}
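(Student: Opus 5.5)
We take $\mu$ to be a Markov measure on $\partial\F_2$ whose stochastic matrix $P$ has a positive stationary distribution and satisfies two conditions. First, $G_{P^tP}$ is a complete graph with loops, so any two states have a common $P$-predecessor. Second, $P$ has zeros beyond the forced ones $P(a,a^{-1})=0$, so the boundary action is singular. A natural candidate is $P(a,a)=0$ for all $a$, with positive weight on the other non-inverse transitions. Then $P$ is strictly irreducible, and \cref{n_dance_n_chaining} (diameter $1$) shows that $\F_2\actson(\partial\F_2,\mu)$ is $1$-chaining. By \cref{Ess-bounded_chaining_for_nonsingularization} together with \cref{L_space-group-generating}, the nonsingularization $\F_2\actson(\partial\F_2,\~\mu)$ of \cref{hyp} is essentially $1$-chaining, witnessed by $W=L$. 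It is mcp by \cref{defn:nonsingularization}. So the positive half of the statement reduces to choosing $P$ with these two properties, and this part is routine.

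The content is showing that $\~\mu$ is not $k$-chaining for any $k$, so every $k$ must be defeated by sets $A,B$ lying off $L$. Recall $\~\mu=\sum 2^{-n}(\gamma_n)_*\mu$. Hence $\~\mu$-a.e.\ $x$ is eventually $P$-legal, with finitely many illegal transitions (occurrences of $aa$) in some initial segment. For a word $u$ containing illegal transitions, the sets $u\cdot L$ have positive $\~\mu$-measure. The plan is to find a quantity $D(x)$, defined $\~\mu$-a.e. and measuring how far $x$ is from legality, with two properties. First, $D$ takes every value in $\N$ on sets of positive measure. Second, whenever $\~\mu(\gamma A\cap\gamma' A)>0$, the possible $D$-values of the two translates overlap in a controlled way. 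I would first try taking $A$ to be a small set of the form $\{u\cdot y : y\in[s]\cap L\}$, where $u$ contains $m$ well-separated illegal junctions, so that $A$ is "rigid". The point is that a translate $\gamma A$ can only change the junctions of $u$ that $\gamma$ cancels into. Consequently, two translates of $A$ intersecting in positive measure must share all but a bounded number of "unremoved" junctions. Note that this is exactly where positive-measure intersections fail to hold for cylinders but not for general sets: on cylinder sets the action is $1$-chaining. Concretely, I expect that positive-measure intersection of $\gamma A$ and $\gamma' A$ forces $\gamma^{-1}\gamma'$ to lie in a bounded family determined by $u$. One would then take $B$ to be a set of the same shape, built on a word $u'$ whose illegal junctions differ from those of $u$ in more than $k$ places. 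Then no chain of $k$ links can move from $A$ to $B$, because each link repairs or creates only boundedly many junctions.

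The main obstacle is making the "each link changes the defect by at most one" step rigorous. A single $\gamma$ can cancel arbitrarily long prefixes, so I will not use $D$ itself. Instead I will use a relative invariant: two points of $\gamma A\cap\gamma' A$ share a common tail, and the illegal junctions occurring in that common tail must be matched letter-by-letter. The key input here is that $\mu$-a.e.\ legal tail is legal, which forces agreement beyond the cancellation point. Then I would choose the junctions of $u$ and $u'$ at lengths growing fast (say spacing larger than all words appearing in the bounded family). An induction along the chain then shows that after $k$ links at most $k$ of the original junction positions can have been altered. Taking $u'$ to differ from $u$ in $k+1$ junctions yields the failure of $k$-chaining for every $k$, so the action is not boundedly chaining.
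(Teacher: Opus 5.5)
Your proposal has a genuine gap in each half.

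\textbf{The choice of $P$.} Your concrete candidate does not have the property you need. If $P(s,s)=0$ for every $s\in\Symb$, on top of $P(s,s^{-1})=0$, then in rank $2$ each of $a^{\pm1}$ can only move to $b^{\pm1}$ and vice versa, so $G_P$ is bipartite. Then $a$ and $b$ have no common predecessor, and $P^tP$ is reducible. By \cref{wm_boundary}, the boundary action is then not even weakly mixing, so its nonsingularization cannot be essentially $1$-chaining. You must keep some loops. The paper forbids only $aa$ and $a^{-1}a^{-1}$ and allows $P(b^{\pm1},b^{\pm1})>0$, which makes $G_{P^tP}$ complete. With that fix, your positive half (via \cref{n_dance_n_chaining} and \cref{Ess-bounded_chaining_for_nonsingularization}) is exactly the paper's.

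\textbf{Failure of bounded chaining.} Here the argument rests on a false claim and never fixes a working invariant. For $A=u\cdot([s]\cap L)$, the set of $\gamma^{-1}\gamma'$ with $\gamma A\cap\gamma' A>_{\tilde\mu}0$ is $u\,\Gamma_{[s]\cap L}\,u^{-1}$. The set $\Gamma_{[s]\cap L}$ is infinite: with the corrected $P$ and $s=b$, every $b^n$ lies in it. So there is no ``bounded family'', no junction spacing ``larger than all words in the family'', and the induction on junction positions has nothing to stand on.

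The obstacle you were trying to avoid is not real, and your count $D$ of illegal transitions works directly. Take $y\in L$ and $\gamma\in\F_2$, and write the reduction as $\gamma=g'q^{-1}$, $y=qz$, $\gamma y=g'z$. The cancelled piece $q^{-1}$ is the inverse of a legal word. Since inversion swaps $aa$ and $a^{-1}a^{-1}$, $q^{-1}$ contains no illegal transition, and neither does $z$. Hence $|D(x)-D(\gamma)|\le 1$ for every $x\in\gamma L$. Consecutive links of a chain from $L$ therefore satisfy $|D(\gamma_{i+1})-D(\gamma_i)|\le 2$, so $D\le 2k+1$ on $\gamma_kL$. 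Thus $L$ cannot $k$-chain to $\{D\ge 2k+2\}\supseteq a^{2k+3}\cdot([b]\cap L)$, which has positive $\tilde\mu$-measure. This holds even if one only asks the intersections to be nonempty.

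The paper runs the same idea with a more local invariant: $A=L$, $B_n=[a^{2n+2}]$, and the length of the leading $a$-run of points of $\gamma_kL$. This length grows by at most $2$ per link, because a legal $y$ can cancel at most one leading $a$ of $\gamma$ (it has no $a^{-1}a^{-1}$) and can add at most one more.
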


\begin{proof}
Let $a$ and $b$ be the free generators of $\F_2$, and let $\mu$ be a Markov measure on $\partial \F_2$ whose transition matrix is symmetric and satisfies $P(a^i, b^j) > 0$, $P(b^i,b^i) > 0$, and $P(a^i,a^i) = 0$ for all $i,j \in \set{\pm 1}$, \ie the associated graph $G_P$ is as follows:

\begin{center}
\begin{tikzpicture}[>=stealth, every node/.style={circle, draw, minimum size=1cm}]
\node (b) at (0,2) {$a$};
\node (a) at (2,2) {$b$};
\node (ainv) at (0,0) {$b^{-1}$};
\node (binv) at (2,0) {$a^{-1}$};

\draw[<->] (a) -- (b);
\draw[<->] (a) -- (binv);
\draw[<->] (ainv) -- (b);
\draw[<->] (ainv) -- (binv);

\draw[->] (a) edge[loop right] (a);
\draw[->] (ainv) edge[loop left] (ainv);
\end{tikzpicture}
\end{center}

Since $P^tP(s,t)>0$ for all $s,t \in \Symb$, by \cref{n_dance_n_chaining}, $\bdryaction$ is $1$-chaining with respect to $\mu$.
Then by \cref{Ess-bounded_chaining_for_nonsingularization}, $\bdryaction$ is essentially $1$-chaining with respect to $\tilde{\mu}$.

It remains to show that $\bdryaction$ is not boundedly chaining with respect to $\tilde{\mu}$.
To see this, let $L$ denote the set of all $P$-legal infinite words in $\Symb^\N$ and let $B_n \defeq [a^{2n+2}]$ for all $n \in \N$.
We claim that for all $n \in \N$, $L$ does not $n$-chain to $B_n$.
To show this, we will show that even if we only require that the intersections of the chain are nonempty (rather than having positive measure), there is no $n$-chain from $L$ to $B_n$.
For $n=0$, simply note that $L \cap B_0 = \emptyset$ since $P(a,a) = 0$.

For $n=1$, we show that for all $\gamma \in \F_2$, $\gamma L \cap L = \emptyset$ or $\gamma L \cap [aaaa] = \emptyset$.
If $\gamma L \cap L \neq \emptyset$, then $\gamma$ cannot begin with $aaa$, since $P(a,a) = P(a^{-1},a^{-1}) = 0$.
Therefore, $\gamma L \cap [aaaa] = \emptyset$ so $\gamma L \cap B_1 = \gamma L \cap [aaaa] = \emptyset$.

We will now show inductively that $L$ does not $k$-chain to $[a^{2k+2}]$.
By the above argument, $L$ does not $1$-chain to $[a^4]$.
Suppose now that we have a $(k+1)$-chain from $L$ witnessed by $\gamma_1, \dots , \gamma_k, \gamma_{k+1}$.
By the inductive hypothesis, $\gamma_k L \cap [a^{2k+2}] = \emptyset$, so each $x \in \gamma_k L$ does not begin with $a^{2k+2}$.
Since $\gamma_{k} L \cap \gamma_{k+1} L \neq \emptyset$ and $P(a^{-1}, a^{-1}) = 0$ (so at most one $a$ can cancel), the word $\gamma_{k+1}$ cannot begin with $a^{2k+3}$, so each $x \in \gamma_{k+1} L$ does not begin with $a^{2k+4}$.
Therefore, $\gamma_{k+1}L \cap [a^{2k+4}] = \emptyset$, as desired.
\end{proof}

\begin{remark}\label{remark:chaining_is_not_symmetric}
The above example also shows that the relation ``$A$ $k$-chains to $B$'' is not symmetric: while the set $L$ does not $1$-chain to $[a^4]$, the set $[a^4]$ \textit{does} $1$-chain to $L$ via $\gamma = a^{-4}$.
Indeed, $\gamma[a^4] \supseteq \gamma[a^5] = [a]$, so $\~\mu(\gamma[a^4] \cap L) > 0$.
We also have that $\gamma[a^4] \supseteq \gamma[a^8] = [a^4]$, so $\~\mu(\gamma[a^4] \cap [a^4]) > 0$.
\end{remark}

\begin{prop}\label{strict_essBC_hierarchy}
Let $k \in \N$.
There is a singular (non-mcp) Borel action $\F_{2k+2} \actson (X,\mu)$ which is $(k+1)$-chaining but not $k$-chaining.
Furthermore, the nonsingularization $\F_{2k+2} \actson (X,\~\mu)$ is essentially $(k+1)$-chaining but not essentially $k$-chaining, where $\tilde{\mu} = \sum_{n \ge 1} 2^{-n} ({\gamma_n})_* \mu$ and $(\gamma_n)_{n \ge 1}$ is an arbitrary enumeration of $\F_{2k+2}$.
\end{prop}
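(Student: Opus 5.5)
The plan is to realize the action as a boundary action $\F_{2k+2} \actson (\partial \F_{2k+2}, \mu)$ for a Markov measure $\mu$ whose transition matrix $P$ is strictly irreducible and for which the diameter of the undirected graph $G_{P^tP}$ is exactly $k+1$. Then \cref{n_dance_n_chaining} immediately gives $(k+1)$-chaining, and \cref{Ess-bounded_chaining_for_nonsingularization} transfers this to essential $(k+1)$-chaining of the nonsingularization. The graph will be built from $r = 2k+2$ generators $c_0,\dots,c_{2k+1}$ so that the zigzag ($P^tP$) graph is a path of length $k+1$ on $k+2$ classes. For instance, take classes $C_0, C_1, \dots, C_{k+1}$ of states, and let out-neighbourhoods of the states be exactly the sets $C_i \cup C_{i+1}$. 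The zigzag relation then links only $C_i$ and $C_{i+1}$, while $P$ itself is irreducible. The $2k+2$ generators and their inverses give enough room to assign states to classes while keeping $P(s,s^{-1})=0$ and admitting a positive stationary distribution (each class nonempty, edges arranged so reachability is an equivalence relation). The parameters I would fiddle with are the exact bookkeeping of which $s^{\pm1}$ goes where. The action is singular because $P$ has zeros beyond inverse transitions.

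The main step is showing \emph{failure} of $k$-chaining. Choose the sets $A = [s]\cap L$ and $B = [t] \cap L$, where $s \in C_0$ and $t \in C_{k+1}$ are at zigzag distance $k+1$. Suppose $\gamma_1,\dots,\gamma_k$ witness a $k$-chain from $A$ to $B$ for $\mu$. Each $\gamma_i A$ is, $\mu$-a.e. on $L$, contained in a set of words whose first letter lies in a single zigzag neighbourhood. The key combinatorial observation I would prove is the following. If $\mu(\gamma L\cap L)>0$, write $\gamma = w$ reduced. Then $\gamma x$ for legal $x$ starting in class $C_j$ either keeps a suffix structure where the letter following the end of $w$ is an out-neighbour of the last letter of $w$, or cancellation occurs. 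Either way, the possible first letters after the junction shift the class index by at most one per link. Formally, I would define for each positive-measure piece of $\gamma_i A\cap L$ the set of classes of the "junction letter" and show by induction on $i$ that it lies in $C_0\cup\dots\cup C_i$. Intersecting $B$ would then require reaching $C_{k+1}$ in $k$ steps, which is impossible. This mirrors the inductive argument in \cref{ess_1C_not_BC}, with cancellation controlled by the inverse-symmetric structure.

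For the essential part, essential $k$-chaining requires $k$-chaining on some group generating complete section $W$ for $\~\mu$. I would show any such $W$ meets $L$ in positive $\mu$-measure on cylinders of every class (using $\~\mu$'s definition and Lebesgue density, \cref{Lebesgue_density}), so suitable $A,B\subseteq W\cap L$ exist. Since $\mu\rest{L}\sim\~\mu\rest{L}$, the $\mu$-obstruction applies; chain links outside $L$ are handled because $\~\mu$-positivity of $\gamma_iA\cap\gamma_{i+1}A$ means positivity for some $(\gamma_n)_*\mu$, reducing to the $\mu$ case after translating. I expect the hardest part to be this inductive cancellation bookkeeping in the non-$k$-chaining step, together with the choice of $W$. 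If taking $A,B$ inside $W$ is problematic, I would first try restricting to deep cylinders $[u s]$ and $[v t]$ whose translates land in $L$.
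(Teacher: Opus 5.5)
\textbf{The ``furthermore'' half cannot be proved.} Your plan fails here for a reason deeper than missing detail: for every $k\ge 1$ the claim that the nonsingularization is \emph{not} essentially $k$-chaining is false, for your construction and for the paper's. The step that breaks is your assertion that ``any such $W$ meets $L$ in positive $\mu$-measure on cylinders of every class.''

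Here is a counterexample. Fix a letter $s$ and put $W\defeq[s]\cap L$. Let $A,B\subseteq W$ have positive measure; this is the same for $\mu$ and $\tilde\mu$, since $\mu\rest{L}\sim\tilde\mu\rest{L}$.
\begin{itemize}
\item Pick $0<\e<\min\set{\mu_s(A),\mu_s(B)}$ and a letter $b$ with $P(b,s)>0$.
\item By \cref{Lebesgue_density}, choose a legal word $wb$ with $\mu_{wb}(A)>1-\e P(b,s)$. Then $\mu_{wbs}(A)>1-\e$, so $\mu_s((wb)^{-1}A)>1-\e$ by \cref{MP}.
\item Hence $(A,(wb)^{-1}A)$ is a $1$-chain from $A$ to $B$. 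This is just \cref{n_dance_n_chaining} at zigzag distance $0$.
\end{itemize}
The set $W$ is a complete section because $s$ is recurrent. It is also group generating. Once your first half is proved, the nonsingularization is essentially $(k+1)$-chaining by \cref{Ess-bounded_chaining_for_nonsingularization}, hence weakly mixing, so every positive-measure set is group generating by \cref{prop:generating_section}. In the paper's graph one can also check directly that the return times $v$ (legal, $v_0=s$, $P(v_{|v|-1},s)>0$) generate $\F_{2k+2}$. So the nonsingularization is essentially $1$-chaining.

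The mechanism is that translates like $(wb)^{-1}$ strip arbitrarily long prefixes. Separation of letters at a fixed depth is therefore no obstruction, and once $A,B$ sit in one first-letter cylinder nothing separates them. Your fallback through deep cylinders is the paper's route via \cref{claim:long_words}, and the same computation refutes that claim whenever $w_0=v_0$. For example, with $k=1$, $[a_1^4]\cap L$ $1$-chains to $[a_1a_2a_3a_4]\cap L$ via $a_1^{-3}$. What survives is the first sentence of the statement together with essential $(k+1)$-chaining.

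\textbf{The first half: right route, but the construction and invariant need fixing.} Your approach matches the paper's: the upper bound comes from \cref{n_dance_n_chaining} and the lower bound from an induction along the chain. However, you leave the graph open exactly where the induction needs it, and you track the wrong quantity.
\begin{itemize}
\item The right invariant is the set of first letters of points of $\gamma A$. For $A\subseteq L$ it lies in $\set{g}\cup N(g^{-1})$, where $g$ is the first letter of $\gamma$ and $N$ denotes out-neighbourhood, since the first letter differs from $g$ only under full cancellation. A ``junction letter'' plays no role.
\item To advance at most one class per link you need two properties: (i) classes closed under inversion, and (ii) each state's out-neighbourhood lying in one block $C_i\cup C_{i+1}$ that contains the state's own class.
\item Given these, if all first letters of $\gamma_jA$ lie in $C_0\cup\dots\cup C_m$, then a nonempty $\gamma_jA\cap\gamma_{j+1}A$ forces $\set{g}\cup N(g^{-1})\subseteq C_0\cup\dots\cup C_{m+1}$.
\item Under (i), out-neighbourhoods cannot be exactly $C_i\cup C_{i+1}$, because they would contain $s^{-1}$. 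If instead a block avoids the state's own class, one link can advance two classes and the count fails.
\end{itemize}
One workable choice is $C_i\defeq\set{a_j^{\pm1}: j\in\set{2i,2i+1},\ 1\le j\le 2k+2}$ for $0\le i\le k+1$, with $N(a_{2i}^{\pm1})\defeq(C_{i-1}\cup C_i)\setminus\set{a_{2i}^{\mp1}}$ and $N(a_{2i+1}^{\pm1})\defeq(C_i\cup C_{i+1})\setminus\set{a_{2i+1}^{\mp1}}$. Then each block is a $P^tP$-clique and $P$ is irreducible. The paper instead uses single-pair nodes with nearest-neighbour moves. There a link can advance two nodes, compensated by zigzag adjacency spanning two nodes, which is why $2k+2$ generators appear.
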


\begin{proof}
The following diagram shows a diagram of $G_P$ where the boundary action $\F_{2k+2} \actson^\bdryaction (\partial\F_{2k+2}, \mu)$ is $(k+1)$-chaining but not $k$-chaining.
Below, for $1\le j \le 2n+2$, the node labeled $j$ contains the vertices $a_j^{\pm 1}$, and the arrows represent all possible arrows between the vertices within the corresponding nodes, aside from transitioning from a state to its inverse.

\begin{center}
\begin{tikzpicture}[
>=stealth,
every node/.style={circle, draw, minimum size=9mm},
scale=1.5
]

\node (1)  at (0,0)  {$1$};
\node (2)  at (1,0)  {$2$};
\node (3)  at (2,0)  {$3$};
\node (4)  at (3,0)  {$4$};
\node (5)  at (4,0)  {$5$};
\draw [dotted] (4.5,0) -- (6,0);
\node (m) at (6.35,0) {{$\scriptscriptstyle 2k+1$}};
\node (n) at (7.55,0) {{$\scriptscriptstyle 2k+2$}};

\draw[<->] (1) to (2);
\draw[<->] (2) to (3);
\draw[<->] (3) to (4);
\draw[<->] (4) to (5);
\draw[<->] (5) to (4.8,0);
\draw[<->] (5.5,0) to (6,0);
\draw[<->] (6.7,0) to (7.2,0);
\draw[->] (1) edge[loop above] (1);
\draw[->] (2) edge[loop above] (2);
\draw[->] (3) edge[loop above] (3);
\draw[->] (4) edge[loop above] (4);
\draw[->] (5) edge[loop above] (5);
\draw[->] (m) edge[loop above] (m);
\draw[->] (n) edge[loop above] (n);
\end{tikzpicture}
\end{center}

One can check that for any two states $a$ and $b$, $a \dancesto b$ can be witnessed in at most $(k+1)$ steps, so by \cref{Ess-bounded_chaining_for_nonsingularization} and \cref{n_dance_n_chaining}, $\bdryaction$ is $(k+1)$-chaining with respect to $\mu$ and thus essentially $(k+1)$-chaining with respect to $\tilde{\mu}$.

We now show that $\bdryaction$ is not $k$-chaining with respect to $\mu$.

\begin{claim}\label{claim:not_k_chain}
    $A = [a_1] \cap L$ does not $k$-chain to $ B = [ a_{2k+2}] \cap L$ with respect to $\mu$.
\end{claim}

\begin{pf}
    We begin by noting that if $A \cap \gamma A >_\mu 0$, then $\gamma A \subseteq \bigcup_{j=1}^3 ([a_j] \cup [a_j^{-1}])$: indeed, since $A \cap \gamma A >_\mu 0$, 
$\gamma_0 \in \set{a_1^{\pm 1}, a_2^{\pm 1}}$.
Hence, if $\gamma A \cap [b] >_\mu 0$, then $b \in \set{a_1^{\pm 1}, a_2^{\pm 1}, a_3^{\pm 1}}$.
Iterating this, we see that for all $j \le k$ and $i > 2j+1$, $[a_1] \cap L$ does not $j$-chain to $[a_{i}] \cap L$.
In particular, $A$ does not $k$-chain to $B$.
\end{pf}

Thus, the boundary action is not $k$-chaining with respect to $\mu$.
It remains to show that the action is not essentially $k$-chaining with respect to $\~\mu$.
Let $W >_{\~\mu} 0$.
Then there is $\gamma \in \Gamma$ for which $\gamma W >_\mu 0$.
Notice that if $A$ $k$-chains to $B$ with respect to $\~\mu$, then the same group elements also witness that $\gamma^{-1} A$ $k$-chains to $\gamma^{-1} B$ with respect to $\~\mu$ (since $\~\mu$ is mcp).

\begin{claim}\label{claim:long_words}
For any legal words $w$ and $v$ of length $n$ such that $n=0$ or the transitions $P(w_{n-1},a_1)$ and $P(v_{n-1},a_{2k+2})$ are positive, $A = [w\conc a_1] \cap L$ does not $k$-chain to $ B = [v \conc a_{2k+2}] \cap L$ with respect to $\mu$.
\end{claim}

\begin{pf}
Set $A = [w\conc a_1] \cap L$ and $ B = [v \conc a_{2k+2}] \cap L$.
By the same argument as in \cref{claim:not_k_chain}, if $A \cap \gamma A>_\mu 0$, then $\gamma_n \in \set{a_1^{\pm 1}, a_2^{\pm 1}}$.
Hence, if $\gamma A \cap [\delta \conc b] >_\mu 0$ for $|\delta| = n$ and $b \in \Symb$, then $b \in \set{a_1^{\pm 1}, a_2^{\pm 1}, a_3^{\pm 1}}$. 
Again, iterating this shows that $A = [w\conc a_1] \cap L$ does not $k$-chain to $ B = [v \conc a_{2k+2}] \cap L$.
\end{pf}

Let $w$ and $v$ be words of length $n$ such that $\mu(\gamma W \cap [w \conc a_1]) > 0$ and  $\mu(\gamma W \cap [v \conc a_{2k+2}]) > 0$.
Then we also have $\mu(\gamma W \cap [w \conc a_1] \cap L) > 0$ and $\mu(\gamma W \cap [v \conc a_{2k+2}] \cap L) > 0$.
By \cref{claim:long_words}, $A = \gamma W \cap [w a_1] \cap L$ does not $k$-chain to $B = \gamma W \cap [w a_{2k+2}] \cap L$ with respect to $\mu$, so by \cref{Ess-bounded_chaining_for_nonsingularization}, $A = \gamma W \cap [w a_1] \cap L$ does not $k$-chain to $B = \gamma W \cap [w a_{2k+2}] \cap L$ with respect to $\~\mu$.
Thus, $\gamma^{-1} A = W \cap \gamma^{-1}[w a_1] \cap \gamma^{-1}L$ does not $k$-chain to $\gamma^{-1} B = W \cap \gamma^{-1}[w_{a 2k+2}] \cap \gamma^{-1}L$, so $\F_{2k+2} \actson (X,\~\mu)$ is not $k$-chaining on $W$ and we conclude that it is not essentially $k$-chaining.
\end{proof}

\def\MR#1{}
\bibliographystyle{amsalpha} 
\bibliography{ref}

\end{document}